\documentclass[twoside,12pt,reqno]{amsart}

\date{\today}

%%Dimensiones de la pagina%%%%%%%
\usepackage{accents}
\usepackage[latin1]{inputenc}
\topmargin 0.5cm
\textheight = 40\baselineskip
\textwidth 15cm
\oddsidemargin 0.9cm
\evensidemargin 0.9cm

%%%%%%%%%
%\usepackage[utf8]{inputenc}
\usepackage{pstricks}
\usepackage{enumerate}	
\usepackage{graphicx,color}
\usepackage[english]{babel}
\usepackage[toc,page]{appendix}
\usepackage{amssymb, mathrsfs, amsfonts, amsmath,amsthm}
\usepackage{mathtools}
\usepackage{amsbsy, hyperref}
\usepackage{latexsym}
\usepackage{booktabs}
\usepackage{tikz}
\usepackage{eurosym} %pacote para o simbolo do euro
\usepackage{newfloat}%%pacote para criar nova lista
\usepackage{hyperref}
\usepackage[alphabetic, msc-links]{amsrefs}%, backrefs]{amsrefs}

\definecolor{r}{rgb}{.9,0.1,.3}

\numberwithin{equation}{section}

\newtheorem*{question}{Question}
\newtheorem{Theorem}{Theorem}[section]
\newtheorem{Proposition}[Theorem]{Proposition}
\newtheorem{Definition}[Theorem]{Definition}

\newtheorem{Remark}[Theorem]{Remark}
\newtheorem{Lemma}[Theorem]{Lemma}

\newcommand{\R}{\mathbb{R}}

\newcommand{\Div}{\operatorname{div}}
\newcommand{\Sing}{\operatorname{Sing}}
\newcommand{\graph}{\operatorname{Graph}}
\newcommand{\Area}{\operatorname{Area}}

\author[E. S. Gama]{Eddygledson S. Gama}
\address[Gama]{
  Departamento de Matem\'atica,
  Universidade Federal do Cear\'a, Bloco 914, Campus do Pici,
  Fortaleza, Cear\'a, 60455-760, Brazil.
  Departamento de Geometr\'\i{}a y Topolog\'\i{}a,
  Universidad de Granada,
  18071 Granada, Spain.
}
\email{eddygledson@gmail.com}

\author[E. Heinonen]{Esko Heinonen}
\address[Heinonen]{
Departamento de Geometr\'i{}a y Topolog\'\i{}a,
  Universidad de Granada,
  18071 Granada, Spain.
Department of Mathematics and Statistics,
University of Helsinki, Finland.
}
\email{ea.heinonen@gmail.com}

\author[J. H. Lira]{Jorge H. De Lira}
\address[Lira]{
  Departamento de Matem\'atica,
  Universidade Federal do Cear\'a, Bloco 914, Campus do Pici,
  Fortaleza, Cear\'a, 60455-760, Brazil.
}

\email{jorge.lira@mat.ufc.br}
\author[F. Mart\'in]{Francisco Mart\'\i{}n}
\address[Mart\'in]{
  Departamento de Geometr\'i{}a y Topolog\'\i{}a,
  Universidad de Granada,
  18071 Granada, Spain.
}
\email{fmartin@ugr.es}

\thanks{
E. S. Gama was supported by Capes/PDSE/88881.132464/2016-01. E. Heinonen was supported by the Finnish Academy of Science and Letters and by the Centre of Excellence in Analysis and Dynamics Research (Academy of Finland, decision 271983). J. H. de Lira was supported by PRONEX/FUNCAP/CNPq PR2-0101-00089.01.00-15. and
CNPq/Edital Universal 409689/2016-5. F. Mart\'in was partially supported by the MINECO/FEDER grant MTM2017-89677-P and  by the
Leverhulme Trust grant IN-2016-019.}

\title[Jenkins-Serrin graphs in $M\times\R$]{Jenkins-Serrin graphs in $M\times\R$}

\begin{document}
\maketitle
\begin{abstract} The so called Jenkins-Serrin problem is a kind of Dirichlet problem for graphs with
prescribed mean curvature that combines, at the same time, continuous boundary data with regions of the boundary where
the boundary values explodes either to $+\infty$ or to $-\infty.$ We give a survey on the development of
Jenkins-Serrin type problems over domains on Riemannian 
manifolds. The existence of this type of graphs imposes restrictions  on the geometry of the boundary of these domains.
We also improve some earlier results by proving Theorem \ref{mainth}, and prove the existence of translating Jenkins-Serrin graphs (Theorem \ref{Existence type 1}).\end{abstract}

\section{Introduction}
Let $(M,\sigma)$ be a Riemannian $n$-manifold (not necessarily complete) and let $\Omega \subset M$ be a domain whose boundary is piecewise smooth. The graph $\Sigma=\graph(u)$ of a smooth function $u\colon\Omega \rightarrow \R$ is a smooth hypersurface in the product manifold $M\times \R$ endowed with the product metric $g_0\coloneqq \sigma + {\rm d}t^2$. Moreover, it is standard to check that if $H\colon M\to\R$ is a smooth function, then $\Sigma$ has prescribed mean curvature $H$ if and only if

\begin{equation}\label{PDE}
\Div\left(\frac{\nabla u}{\sqrt[]{1+|\nabla u|^2}}\right)=H,
\end{equation}
for all $x\in\Omega$, where $\nabla$ and $\Div$ are the gradient and divergence with respect to $\sigma.$ 

In this paper we will work in the cases when either $\Sigma$ is minimal, or $\Sigma$ has constant mean curvature, or $\Sigma$ is a translating soliton of the mean curvature flow.  Moreover, we will adopt the convention that {\em the mean curvature is just the trace of the second fundamental form.}

First of all, let us explain what we mean by a Jenkins-Serrin solution of \eqref{PDE}. 
Given any Riemannian manifold $M$ of dimension $n \geq 2$, let $\Omega\subset M$ be a bounded Lipschitz domain 
whose boundary can be written as 
$$\partial \Omega = \Gamma_0 \cup \Gamma_1 \cup \Gamma_2 \cup N$$
where $ \Gamma_0$, $\Gamma_1$, $\Gamma_2$ are open disjoint subsets of class $C^1$ and $N$ is a closed subset 
such that  the $(n-1)$-dimensional Hausdorff measure $\mathcal{H}_{n-1}(N)=0.$
 The Jenkins-Serrin solution of \eqref{PDE} with continuous data $f_0\colon\Gamma_0\to \R$ is a smooth function 
 $u\colon\Omega\to\R$ that satisfies \eqref{PDE}, \(u|_{\Gamma_0}=f_0\) and 
\begin{enumerate}
\item[i.]$u\to+\infty$ as $x\to \Gamma_1$;
\item[ii.]$u\to-\infty$ as $x\to \Gamma_2$.
\end{enumerate}
As we will see later, the conditions i and ii impose important restrictions on the extrinsic geometry of $\Gamma_1$
and $\Gamma_2$, in general.

Probably, the best known example of Jenkins-Serrin solutions is given by the classical Scherk's minimal surface, that is the graph of the function $f(x,y)=\ln (\cos(x) /\cos(y))$ over the domain $\Omega=(-\pi/2,\pi/2)\times (-\pi/2,\pi/2)$.  This idea of construction was generalized by H. Jenkins and J. Serrin in  \cite{Jenkins-Serrin}, but 
 before stating their theorem, we need some notation. Let $\Omega\subset \R^2$ be a bounded domain whose boundary consists of a finite number of open straight arcs $A_1,\ldots,A_r$ (the set $\Gamma_1$), $B_1\ldots,B_s$ (the set $\Gamma_2$), and a finite number of convex arcs $C_1,\ldots,C_t$ (the set $\Gamma_0$) with respect to $\Omega$ and the end points of these arcs (the set $N$).  Let $\mathcal{P}$ be a polygon whose vertices are chosen among the vertices of $\Omega;$ this polygon will be called {\em an admissible polygon}. We are going to use also the following notation:
\[\alpha(\mathcal{P})=\sum_{A_i\subset\mathcal{P}}||A_i||,\ \beta(\mathcal{P})=\sum_{B_i\subset\mathcal{P}}||B_i|| \ \text{ and } \ \ell(\mathcal{P})={\rm perimeter}(\mathcal{P}),\]
where $||A||$ denotes the arc length of $A$.

%Now we can state the Jenkins and Serrin Theorem.

\begin{Theorem}[Jenkins-Serrin]\label{Jenkins-Serrin 1}
Let $\Omega\subset \R^2$ be as above. %a bounded domain whose boundary consists of a finite number of open straight arcs $A_1,\ldots,A_r,B_1\ldots,B_s$, and a finite number of convex arcs $C_1,\ldots,C_t$ with respect to $\Omega$ and the endpoints of these arcs. 
Assume also that no two arcs $A_i$ and no two arcs $B_i$ have a common endpoint. Given any continuous data $f_i\colon C_i\to\R$, then there exists a Jenkins-Serrin solution $u\colon \Omega\to\R$ for the minimal graph equation with continuous data \(u|_{C_i}=f_i\) if, and only if, for any admissible polygon $\mathcal{P}$ we have
\begin{equation}\label{structure}
2\alpha(\mathcal{P})<\ell(\mathcal{P})\ \operatorname{and}\ 2\beta(\mathcal{P})<\ell(\mathcal{P}).
\end{equation}
If $\{C_i\}=\varnothing$, we also require that $\alpha(\partial \Omega)=\beta(\partial\Omega)$ for $\mathcal{P}=\partial\Omega.$ Moreover, if $\{C_i\}\neq\varnothing,$ then $u$ is unique, and if $\{C_i\}=\varnothing,$ then $u$ is unique up to adding a constant.
\end{Theorem}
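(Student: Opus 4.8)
The plan is to treat the two implications separately, using the flux of the vector field $X_u \coloneqq \nabla u/\sqrt{1+|\nabla u|^2}$ as the central tool. In the minimal case, equation \eqref{PDE} reads $\Div X_u = 0$, and since $|X_u| < 1$ pointwise in $\Omega$, for every rectifiable arc $\gamma$ the flux $\int_\gamma X_u \cdot \nu \, ds$ is bounded in absolute value by $\|\gamma\|$, with strict inequality whenever $\gamma$ lies in the interior of $\Omega$. The two facts I would isolate first are: (a) along an arc $A_i$ on which $u \to +\infty$ the flux of $X_u$ equals $+\|A_i\|$, and along $B_i$ it equals $-\|B_i\|$, because $X_u$ tends to the outer unit conormal there; and (b) the divergence theorem applied to an admissible polygon $\mathcal{P}$ gives $\int_{\partial\mathcal{P}} X_u \cdot \nu \, ds = 0$.

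Necessity then follows from flux balance. Given a proper admissible polygon $\mathcal{P}$, I split $\partial\mathcal{P}$ into its $A$-edges, its $B$-edges, and the chords lying interior to $\Omega$; every such $\mathcal{P}$ has at least one interior chord. By (a) the flux over the $A$-edges equals $\alpha(\mathcal{P})$, while over an interior chord $\gamma$ one has $\int_\gamma X_u\cdot\nu \, ds > -\|\gamma\|$ strictly since $|X_u|<1$ there. Feeding these into $\int_{\partial\mathcal{P}} X_u\cdot\nu \, ds = 0$ yields $2\alpha(\mathcal{P}) < \ell(\mathcal{P})$, and the symmetric grouping gives $2\beta(\mathcal{P}) < \ell(\mathcal{P})$. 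Taking $\mathcal{P} = \partial\Omega$ in the case $\{C_i\}=\varnothing$ leaves no interior chord, and the same computation instead produces the equality $\alpha(\partial\Omega) = \beta(\partial\Omega)$.

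For sufficiency I would construct the solution as a limit of solutions of bounded Dirichlet problems. Solving \eqref{PDE} on $\Omega$ with data $f_i$ on $C_i$, constant $+n$ on each $A_i$ and $-m$ on each $B_i$ is possible because the straight arcs are geodesics and the $C_i$ are convex toward $\Omega$, so Scherk-type barriers together with the mean-convexity of $\partial\Omega$ give the required boundary gradient estimates; letting first $n\to+\infty$ and then $m\to+\infty$ produces monotone families. The compactness input is the interior gradient estimate for minimal graphs combined with flux monotonicity, which yields $C^1_{\mathrm{loc}}$ convergence of subsequences on compact subsets of $\Omega$. The crucial step, and the one I expect to be the main obstacle, is the Jenkins-Serrin dichotomy for such a monotone family: either it converges locally uniformly to a solution, or its divergence set has a boundary inside $\Omega$ consisting of straight chords joining vertices of $\Omega$, across which the flux of $X_u$ saturates at $\pm$ the full length. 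Assuming the latter and applying the flux balance to the polygon bounded by these chords together with the adjacent $A_i$ (resp. $B_i$) contradicts exactly the strict inequality \eqref{structure}; hence convergence must occur, and the limit $u$ solves \eqref{PDE}. A final round of barrier arguments shows that $u$ attains the data $f_i$ on $C_i$ and blows up to $\pm\infty$ on $\Gamma_1$ and $\Gamma_2$.

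Uniqueness I would obtain from the maximum principle in flux form. For two solutions $u,v$, the monotonicity of $p\mapsto p/\sqrt{1+|p|^2}$ gives $(X_u - X_v)\cdot\nabla(u-v) \ge 0$, and integrating $\Div\big((u-v)(X_u-X_v)\big)$ over $\Omega$, where $\Div X_u = \Div X_v = 0$, reduces everything to a boundary term supported on $\Gamma_0$, where $u = v = f_i$, after a careful exhaustion near $\Gamma_1\cup\Gamma_2$ along which both functions blow up in the same direction and the difference stays controlled. When $\{C_i\}\neq\varnothing$ this forces $u\equiv v$; when $\{C_i\} = \varnothing$ the boundary term is absent, so the argument only controls $\nabla(u-v)$ and leaves the additive constant undetermined.
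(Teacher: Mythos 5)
You should first know that the paper contains no proof of Theorem \ref{Jenkins-Serrin 1}: it is quoted verbatim from \cite{Jenkins-Serrin} as part of the survey in the introduction, so there is nothing in the paper to compare your argument against line by line. (The closest relative in the paper is Section \ref{Jenkins-Serrin Translating Soliton}, where the translator analogue, Theorem \ref{Existence type 1}, is proved by a different route: Plateau's problem for Nitsche curves in Ilmanen's metric, with Pinheiro's minimal Jenkins--Serrin solution used as a barrier.) Measured against the classical Jenkins--Serrin proof, your outline has the right skeleton: flux balance for necessity, monotone Dirichlet exhaustion plus the divergence-set dichotomy for sufficiency, and a divergence-structure maximum principle for uniqueness. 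The necessity half is essentially complete, modulo the (standard but nontrivial) lemma that the flux of $X_u$ across a straight arc where $u\to\pm\infty$ saturates at $\pm$ its length.

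There are, however, two concrete gaps. First, uniqueness: integrating $\Div\big((u-v)(X_u-X_v)\big)$ presumes $u-v$ is a usable test function, and your claim that ``the difference stays controlled'' near $\Gamma_1\cup\Gamma_2$ is both unjustified and not the mechanism that actually makes the argument work. On an arc $A_i$ both $u$ and $v$ tend to $+\infty$, so $\liminf(u-v)$ is an indeterminate form $\infty-\infty$ and there is no a priori bound on $u-v$ there. The classical repair --- and exactly the device the paper itself uses in Proposition \ref{Max. Principle} for translators --- is to replace $u-v$ by a truncation of the form
\begin{equation*}
\varphi=\min\bigl(\max(u-v-\varepsilon,0),\,K\bigr),
\end{equation*}
which is bounded by construction, and then to kill the boundary contribution along $\Gamma_1\cup\Gamma_2$ not by controlling $u-v$ but by flux saturation: both $X_u\cdot\nu$ and $X_v\cdot\nu$ tend to $\pm1$ on those arcs, so $(X_u-X_v)\cdot\nu\to0$ while $\varphi\leq K$ stays bounded. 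Second, sufficiency when $\{C_i\}=\varnothing$: your construction (``first $n\to+\infty$, then $m\to+\infty$'') is anchored by the fixed data $f_i$ on the $C_i$, so it says nothing in this case. There one must send the data on the $A_i$ and $B_i$ to $\pm\infty$ simultaneously, normalize the solutions (e.g. $u_n(p_0)=0$ at a fixed interior point), and it is precisely the equality $\alpha(\partial\Omega)=\beta(\partial\Omega)$ --- which never enters your sufficiency argument, only your necessity argument --- that prevents the normalized sequence from diverging everywhere. As written, your proof establishes the theorem only for $\{C_i\}\neq\varnothing$.
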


In \cite{Jenkins-Serrin}, Jenkins and Serrin proved also that the existence of minimal Jenkins-Serrin solutions imposes several restrictions over the domain.

\begin{Theorem}[Jenkins-Serrin]\label{Jenkins-Serrin 3}
Let $\Omega\subset \R^2$ be a bounded domain. Suppose that $u\colon \Omega\to\R$ is  a solution of the minimal graph equation 
satisfying $u\to\pm\infty$ as $x\to\gamma,$ where $\gamma$ is a smooth connected component of $\partial \Omega.$ Then $\gamma$ is a geodesic.
\end{Theorem}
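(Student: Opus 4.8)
The plan is to exploit the divergence structure of the minimal graph equation through a flux comparison. Set $X \coloneqq \nabla u/\sqrt{1+|\nabla u|^2}$, which by \eqref{PDE} with $H=0$ is a divergence-free vector field on $\Omega$ satisfying $|X| = |\nabla u|/\sqrt{1+|\nabla u|^2} < 1$ at every interior point. The heuristic is that as $x\to\gamma$ the gradient $\nabla u$ blows up in the direction normal to $\gamma$, so $X$ should converge to the outer unit normal $\nu$ of $\Omega$ along $\gamma$ (to $-\nu$ in the case $u\to-\infty$). Since $X$ is divergence-free its flux is conserved, so comparing the flux across $\gamma$ with the flux across a straight chord will pin down the length of $\gamma$ and force it to coincide with that chord, i.e. to be a geodesic of $\R^2$.

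To make the boundary behaviour rigorous I would avoid approaching $\gamma$ by arbitrary interior curves and instead use the level sets $\sigma_c \coloneqq \{u = c\}$, which accumulate on $\gamma$ as $c\to+\infty$. Along $\sigma_c$ the unit normal is exactly $\eta = \nabla u/|\nabla u|$, so
\[
X\cdot\eta = \frac{|\nabla u|}{\sqrt{1+|\nabla u|^2}} \longrightarrow 1
\]
as $c\to+\infty$. Hence, for $p,q\in\gamma$ close together and $\gamma_{pq}$ the subarc between them, the flux of $X$ across the corresponding pieces of $\sigma_c$ tends to the length $\|\gamma_{pq}\|$. Applying the divergence theorem to the region bounded by $\gamma_{pq}$ and the straight chord $L_{pq}$ joining $p$ and $q$, and using $\Div X = 0$, shows that the flux of $X$ across $L_{pq}$ equals $\|\gamma_{pq}\|$ as well.

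Finally I would combine this with the pointwise bound $|X|<1$. The flux across $L_{pq}$ is at most $\int_{L_{pq}}|X|\,ds \le \|L_{pq}\|$, so $\|\gamma_{pq}\| \le \|L_{pq}\|$; on the other hand the chord is the shortest curve joining $p$ and $q$, so $\|L_{pq}\|\le\|\gamma_{pq}\|$. Thus $\|\gamma_{pq}\| = \|L_{pq}\|$, and equality in the triangle/length inequality forces $\gamma_{pq}$ to coincide with the segment $L_{pq}$. As $p,q$ are arbitrary, $\gamma$ has vanishing geodesic curvature and is therefore a geodesic; the case $u\to-\infty$ is identical after replacing $X$ by $-X$ and letting $c\to-\infty$.

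The main obstacle is the rigorous justification of the boundary asymptotics: that the level sets $\sigma_c$ genuinely converge to $\gamma_{pq}$ with lengths tending to $\|\gamma_{pq}\|$ and that $X\cdot\eta\to1$ in an integrated sense. This rests on interior gradient estimates for solutions of \eqref{PDE} near the set where $u$ blows up. A secondary point requiring care is that the chord $L_{pq}$ must lie in $\overline\Omega$ for the flux-conservation step; this is automatic for $p,q$ close when $\Omega$ is locally convex along $\gamma$, whereas the complementary configuration, in which $\gamma$ is strictly concave as seen from $\Omega$, has to be excluded by a separate comparison argument (for instance against the explicit rotationally symmetric minimal graphs, which exhibit only vertical contact and never value blow-up over such an arc).
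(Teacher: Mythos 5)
Your proposal is the classical flux argument of Jenkins and Serrin, and it is genuinely different from the way this paper treats the statement: the paper never argues in the plane, but obtains Theorem \ref{Jenkins-Serrin 3} as the special case $M=\R^2$ of Theorem \ref{mainth}, proved in Theorem \ref{main} by considering the vertical translates $\Sigma_i={\rm Graph}[u-u(x_i)]$, using the area-minimizing property of graphs (Lemma \ref{Homology}) to get local mass bounds, extracting a varifold limit via the Schoen--Simon compactness theorem (Theorem \ref{Wickramasekera}), and showing the limit is supported in the cylinder $\gamma\times\R$, which is therefore minimal, whence $H_\gamma=0$ by Lemma \ref{Product Lemma}. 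Your route, if completed, is more elementary and two-dimensional; the paper's buys generality in the ambient manifold and the dimension. As written, however, your argument has two genuine gaps.

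The first is the flux identity $F_{\gamma_{pq}}=\|\gamma_{pq}\|$, which is the technical heart of the classical proof and is not established by your level-set sketch. You need simultaneously that $|\nabla u|\to\infty$ uniformly along the relevant pieces of $\sigma_c$ and that the lengths of those pieces converge to $\|\gamma_{pq}\|$ (no folding or excess length, i.e.\ multiplicity one in the limit). Neither is proved, and the only inequality that comes for free, namely $\int_{\sigma_c}\frac{|\nabla u|}{W}\,ds\le \mathrm{length}(\sigma_c)$, points the wrong way: it bounds the flux \emph{above} by the liminf of the level-set lengths, whereas the theorem needs the flux bounded \emph{below} by $\|\gamma_{pq}\|$. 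Jenkins and Serrin prove this lemma by a separate comparison argument; it cannot simply be read off from $X\cdot\eta<1$.

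The second gap is more structural: the concave configuration is not a technicality that can be delegated to a parenthesis, because the flux method is intrinsically incapable of excluding it. If $\gamma$ curves away from $\Omega$ (think of $\gamma$ as the boundary of a convex hole, $\Omega$ locally the exterior of a disk), then $\gamma_{pq}$ is itself the shortest curve in $\overline\Omega$ joining $p$ to $q$, so the only conclusion flux conservation plus $|X|\le 1$ can ever yield --- that $\|\gamma_{pq}\|\le\|\Gamma\|$ for every competitor $\Gamma\subset\overline\Omega$ --- carries no contradiction there. Ruling out this case is the other half of the theorem, and your suggested comparison with rotationally symmetric graphs does not close it as stated: the half-catenoid $v=r\cosh^{-1}(|x-c|/r)$ is \emph{finite} on the circle where $u$ is supposed to blow up, and every comparison region adjacent to $\gamma$ has boundary arcs reaching $\gamma$ on which $\sup u=+\infty$, so a naive maximum-principle comparison between $u$ and a finite barrier cannot be run. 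Classically this half requires Spruck-type barrier arguments (cf.\ Theorem \ref{Spruck 3}) combined with a generalized maximum principle in the spirit of Proposition \ref{Max. Principle}, or, as in this paper, the compactness machinery; some such additional mechanism must be supplied before your proof is complete.
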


J. Spruck \cite{Spruck-Infinite} extended Jenkins' and Serrin's work for the CMC case. He proved the existence of CMC Jenkins-Serrin solutions for very specific domains in $\R^2$. The setting in \cite{Spruck-Infinite} was the following:

Let $\Omega\subset \R^2$ be a bounded domain whose boundary consists of finite number of open arcs $A_1,\ldots,A_r,B_1\ldots,B_s$, $C_1,\ldots,C_t$ and the endpoints of these arcs. Suppose that
the curvatures of these arcs satisfy 
	\begin{equation}\label{spruck-domain}
	\kappa(A_i)=H  , \quad \kappa(B_i)=-H, \quad \kappa(C_i)\geq H.
	\end{equation}

Now, we say that $\mathcal{P}$ is  {\em admissible} if $\mathcal{P}$ is a simple closed curvilinear polygon whose sides
consists of circular arcs of curvature $\kappa=\pm H$ and whose vertices belong to the set of end points
of the arcs $\{A_i\}$ and $\{B_i\}$. As before, $\alpha$ and $\beta$, respectively, denotes the total length of the arcs $A_i$ and $B_i$, and $\ell$ denotes the perimeter of $\mathcal{P}$.

\begin{Theorem}[Spruck]\label{Spruck 0}
Let $\Omega\subset \R^2$ be a domain as above satisfying \eqref{spruck-domain} with 
$\{C_i\}\neq\varnothing$ and $|B_i|< \pi /H$.
%Let $\Omega\subset \R^2$ be a bounded domain whose boundary consists of finite number of open arcs $A_1,\ldots,A_r,B_1\ldots,B_s$, $C_1,\ldots,C_t$ and the endpoints of these arcs with $\{C_i\}\neq\varnothing$. Suppose that
%the curvatures of these arcs satisfy 
%\(\kappa(A_i)=H  ,\kappa(B_i)=-H,\kappa(C_i)\geq H\ \operatorname{and}\ |B_i|<\frac{\pi}{H}.\) 
Assume also that no two arcs $A_i$ and no two arcs $B_i$ have a common endpoint and that the domain 
$\Omega^*$ formed by reflection\footnote{
We reflect $B_i$ with respect to the line passing through the end points. In this way we obtain an arc
$B_i^*$ with curvature $\kappa(B_i^*)= H.$}  of the arcs \(B_i\) is contained in a disk of radius $1/H$. Given any continuous data $f_i \colon C_i\to\R$, there exists a unique Jenkins-Serrin solution $u \colon \Omega\to\R$ with mean curvature $H$ and satisfying \(u|_{C_i}=f_i\), if and only if, for any admissible polygon $\mathcal{P}$ we have
\begin{equation*}\label{structure CMC}
2\alpha(\mathcal{P})<\ell(\mathcal{P})+H\operatorname{Area}(\mathcal{P})\ \operatorname{and}\ 2\beta(\mathcal{P})<\ell(\mathcal{P})-H\operatorname{Area}(\mathcal{P}).
\end{equation*}
\end{Theorem}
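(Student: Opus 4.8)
The plan is to follow the now-standard Perron-type strategy that Jenkins and Serrin pioneered in \cite{Jenkins-Serrin} and Spruck adapted in \cite{Spruck-Infinite}, solving a sequence of bounded Dirichlet problems and extracting a limit. First I would replace the unbounded data on $\Gamma_1=\bigcup A_i$ and $\Gamma_2=\bigcup B_i$ by finite approximations: for each $n\in\n$, solve the CMC-$H$ Dirichlet problem on $\Omega$ with boundary values $\min(f_i,n)$ (or continuous approximations) on the $C_i$, value $n$ on the arcs $A_i$, and value $-n$ on the arcs $B_i$. Solvability of each such bounded problem rests on the curvature conditions $\kappa(C_i)\geq H$, $\kappa(A_i)=H$, $\kappa(B_i)=-H$, which guarantee the existence of local barriers and hence the a priori boundary gradient estimates needed to run the continuity method for equation \eqref{PDE} with the constant right-hand side $H$; the reflection hypothesis (that $\Omega^*$ sits inside a disk of radius $1/H$) is precisely what is needed to construct the barriers along the concave arcs $B_i$.

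The heart of the argument is a compactness estimate. Using the flux (divergence-theorem) formula for the CMC operator applied to the vector field $\nabla u_n/\sqrt{1+|\nabla u_n|^2}$ over the admissible polygons $\mathcal{P}$, I would derive the \emph{structure inequalities}
\[
2\alpha(\mathcal{P})<\ell(\mathcal{P})+H\Area(\mathcal{P}),\qquad 2\beta(\mathcal{P})<\ell(\mathcal{P})-H\Area(\mathcal{P})
\]
as exactly the integrability/flux-balance conditions that make the sequence $\{u_n\}$ locally bounded in the interior and keep its gradient controlled on compact subsets of $\Omega$. Concretely, integrating the equation over a subdomain and using that the flux of $\nabla u_n/\sqrt{1+|\nabla u_n|^2}$ across an arc of type $A$ (respectively $B$) tends to $+\|A\|$ (respectively $-\|B\|$) as the data blow up, while the flux across an interior curve is bounded in absolute value by its length, yields both necessity and sufficiency of the inequalities. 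The interior gradient estimate (of Korevaar--Spruck type, available because $H$ is constant) then gives uniform local $C^{1,\alpha}$ bounds, so Schauder theory promotes this to local $C^{2,\alpha}$ control and a subsequence converges in $C^2_{\mathrm{loc}}(\Omega)$ to a solution $u$ of \eqref{PDE}.

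Next I would verify that the limit $u$ attains the prescribed boundary behaviour: $u=f_i$ on each $C_i$ (from the retained barriers on the convex arcs), $u\to+\infty$ along each $A_i$ and $u\to-\infty$ along each $B_i$. The blow-up is forced by comparing $u$ with the \emph{extremal CMC graphs}, i.e.\ lower (respectively upper) hemisphere-type solutions of curvature $H$ spanning the arcs $A_i$ (respectively the reflected arcs) whose boundary values already go to $\pm\infty$; here again the hypothesis $|B_i|<\pi/H$ together with the disk-of-radius-$1/H$ condition guarantees that such comparison surfaces exist and lie on the correct side. For uniqueness, since $\{C_i\}\neq\varnothing$, I would invoke the general maximum principle for the CMC graph operator: given two solutions with the same data, the set where they differ is controlled by a flux argument on the polygon enclosing it, and the strict structure inequalities force this set to be empty.

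The main obstacle I anticipate is the barrier construction along the concave arcs $B_i$ and the accompanying flux bookkeeping near the vertices in $N$. Unlike the minimal case, the constant curvature $H$ breaks the symmetry between $A$-type and $B$-type arcs (reflected in the asymmetric $\pm H\Area(\mathcal{P})$ terms), so the reflection hypothesis must be used carefully to certify that the approximating solutions $u_n$ are uniformly bounded \emph{above} near the $B_i$ and uniformly bounded \emph{below} near the $A_i$, preventing the sequence from degenerating before the limit is taken. Managing the flux contributions in a neighbourhood of the corner points $N$ (where $\mathcal{H}_{n-1}(N)=0$ but the geometry is singular) so that the polygon inequalities genuinely capture the compactness—rather than losing mass to the corners—is the delicate technical point on which the whole scheme turns.
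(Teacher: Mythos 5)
First, a caveat on the comparison: the paper you are reading never proves Theorem \ref{Spruck 0} --- it is a survey statement quoted from Spruck \cite{Spruck-Infinite} --- so your proposal has to be measured against Spruck's original argument. Your global architecture (truncated Dirichlet problems, flux identities giving both the necessity of the polygon inequalities and interior compactness, barriers for the boundary behaviour, a maximum principle for uniqueness since $\{C_i\}\neq\varnothing$) is indeed the Jenkins--Serrin--Spruck scheme, and your flux computation for necessity is correct. The genuine gap is at the very first step, and it is precisely the point where the CMC case is harder than the minimal one: the truncated problems you want to solve on $\Omega$, with data $-n$ on the arcs $B_i$, in general \emph{do not have solutions}. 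For the equation $\Div\bigl(\nabla u/\sqrt{1+|\nabla u|^2}\bigr)=H$ with $H>0$, Serrin's sharp solvability condition requires the inward curvature of the boundary to be at least $H$ wherever finite continuous data is prescribed: the lower barrier at a boundary point $p$ (the one forcing $\liminf_{x\to p}u\geq\phi(p)$) exists only when $\kappa(p)\geq H$, whereas the upper barrier only needs $\kappa(p)\geq -H$. On $B_i$ one has $\kappa(B_i)=-H<H$, so the boundary gradient estimate fails there, the continuity method breaks down, and for large $n$ no solution attains the value $-n$ on $B_i$: solutions detach from below, the CMC analogue of the classical catenoid obstruction for the minimal surface equation over concave arcs. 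In the minimal case $\kappa(B_i)=0$ satisfies Serrin's condition marginally, which is why Jenkins and Serrin can truncate on $\Omega$ itself; for $H>0$ this is exactly the asymmetry your proposal glosses over, and it assigns the reflection hypothesis the wrong job (``barriers along the $B_i$ on $\Omega$'') instead of its real one.

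The real role of the reflection in Spruck's proof (carried out in detail also in \cite{Hauswirth-Rosenberg-Spruck}) is to move the truncated problems to the \emph{enlarged} domain $\Omega^*$: its boundary arcs $B_i^*$ have curvature $+H$ with respect to $\Omega^*$, so Serrin's condition holds on all of $\partial\Omega^*$; the hypothesis that $\Omega^*$ lies in a disk of radius $1/H$ supplies the spherical-cap supersolution that gives the a priori height bound (some such smallness is unavoidable, since by the divergence theorem $H\Area(\Omega^*)<\ell(\partial\Omega^*)$ is necessary for solvability), and $|B_i|<\pi/H$ keeps the reflected configuration embedded. One prescribes $n$ on $A_i$, $f_i$ on $C_i$ and $-n$ on $B_i^*$, and then shows --- via the flux inequalities combined with the Jenkins--Serrin monotone convergence theorem, whose divergence sets are bounded by arcs of curvature $\pm H$ --- that the limit diverges to $-\infty$ exactly on the lens-shaped regions between $B_i$ and $B_i^*$ and converges on $\Omega$; its restriction to $\Omega$ is the desired solution, the behaviour $u\to-\infty$ on $B_i$ \emph{emerging in the limit} rather than being imposed on the approximants. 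Two secondary points: truncating at $+n$ and $-n$ simultaneously destroys monotonicity, so the limit must be organized through a doubly indexed family or a sandwich between the one-signed problems; and it is this monotone-convergence/divergence-line analysis, not the interior gradient estimate you invoke, that rules out the limit being identically $\pm\infty$ on open subsets of $\Omega$.
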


In the case $\{C_i\}=\varnothing,$ he obtained that the necessary and sufficient conditions for the existence
of the solution are

\begin{equation*}\label{structure CMC*}
2\alpha(\mathcal{P})<\ell(\mathcal{P})+H\operatorname{Area}(\mathcal{P})\ \operatorname{and}\ 2\beta(\mathcal{P})<\ell(\mathcal{P})-H\operatorname{Area}(\mathcal{P})
\end{equation*}
and 
\begin{equation*}\label{structure CMC**}
\alpha(\mathcal{P})=\beta(\mathcal{P})+H\operatorname{Area}(\Omega).
\end{equation*}

In higher dimensions the question is more difficult but
Spruck proved the following local existence of CMC Jenkins-Serrin solution.
\begin{Theorem}[Spruck]\label{Spruck 2}
Let $S$ be a $C^2$ hypersurface in $\R^n$ with mean curvature $H(>0).$ Let $p$ be an interior point of $S$, and $B_\varepsilon(p)$ a ball with center $p$ and radius $\varepsilon$. 
For small $\varepsilon,$ let $D^+$ and $D^-$ be the domains bounded by $S$ and $\partial B_{\varepsilon}(p)$ so that the mean curvature $H_S= H$ with respect to $D^+$ and $H_S=-H$ with respect to $D^-$.
%where we choose $B^+$ and $B^-$ so that the mean curvature the $S\cap B_{\varepsilon}(p)$ is $H$ with respect to $B^+$ and $-H$ with respect to $B^-.$ 
Then for $\varepsilon$ small enough, there exists a Jenkins-Serrin solution $u$ in $D^+$ (respectively $D^-$) with boundary values $+\infty$ (respectively $-\infty$) on $S$
%$u\to\infty$ as $x\to S\cap B_{\varepsilon}(p)$ 
%(respectively $u\to-\infty$ as $x\to S\cap B_{\varepsilon}(p)$) 
and prescribed continuous data $f$ on the remainder of the boundary.
\end{Theorem}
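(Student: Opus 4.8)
The plan is to obtain $u$ as a monotone limit of solutions of Dirichlet problems and to isolate the one genuinely global ingredient, a uniform interior upper bound, which is where the smallness of $\varepsilon$ is used. I treat $D^+$; the case of $D^-$ is identical after the substitution $(u,H)\mapsto(-u,-H)$. Write $\Gamma=\partial D^+\cap\partial B_\varepsilon(p)$ for the spherical part of the boundary and $S_\varepsilon=S\cap\overline{B_\varepsilon(p)}$. For each $k\in\n$ I would solve the Dirichlet problem
\[
\Div\Big(\frac{\nabla u_k}{\sqrt{1+|\nabla u_k|^2}}\Big)=H\ \text{ in }D^+,\qquad u_k=k\ \text{ on }S_\varepsilon,\qquad u_k=f\ \text{ on }\Gamma.
\]
In the trace convention adopted here, Serrin's solvability criterion asks that $\partial D^+$ have mean curvature at least $H$ with respect to the inner normal; this holds with equality on $S_\varepsilon$ by hypothesis and strictly on $\Gamma$, where the small sphere has mean curvature $\approx(n-1)/\varepsilon\gg H$. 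The comparison principle makes $k\mapsto u_k$ nondecreasing, so $u:=\lim_k u_k$ exists with values in $(-\infty,+\infty]$. The divergence $u\to+\infty$ on $S_\varepsilon$ is then automatic: since $u\ge u_k$ and $u_k\to k$ continuously along $S_\varepsilon$, we get $\liminf_{x\to S_\varepsilon}u\ge k$ for every $k$ (this is the analytic counterpart of the geometric restriction in Theorem \ref{Jenkins-Serrin 3}).

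The crux is to prove that $u$ is \emph{finite}, hence smooth, in the interior; equivalently, that the increasing limit is not identically $+\infty$. No elementary barrier is available for this: any upper barrier must itself blow up along $S_\varepsilon$ while staying finite inside, i.e.\ it must be a constant mean curvature $H$ graph having $S_\varepsilon$ as asymptotic boundary, which is essentially the object we are constructing. The point is that such an interior bound does exist once $\varepsilon$ is small. Heuristically, rescaling by $1/\varepsilon$ about $p$ turns the graph of $u_k$ into a graph of mean curvature $\varepsilon H\to0$ over the blown-up region $\varepsilon^{-1}(D^+-p)$, whose boundary converges in $C^2$ to the flat half-ball configuration consisting of the hyperplane disk $T_pS\cap B_1(0)$ and a hemisphere of $\partial B_1(0)$. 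In the limit this is a \emph{minimal} Jenkins-Serrin problem with $+\infty$ data on the geodesic flat face, and the structure inequality \eqref{structure} of Theorem \ref{Jenkins-Serrin 1} holds \emph{strictly} there, because the flat disk has $(n-1)$-area strictly smaller than the hemisphere. Rigorously, this strict slack is encoded in the flux identity $\int_{\partial\Omega'}\frac{\nabla u_k}{\sqrt{1+|\nabla u_k|^2}}\cdot\nu\,d\mathcal H_{n-1}=H\,|\Omega'|$, which forbids an interior region on which $u\equiv+\infty$ (such a region would be bounded by CMC-$H$ walls whose measure would violate the strict inequality) once the lens $D^+$ is thin enough. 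This is the higher-dimensional analogue of Spruck's hypothesis in Theorem \ref{Spruck 0} that the reflected domain be contained in a disk of radius $1/H$.

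With a locally uniform upper bound in hand (the lower bound $u_k\ge u_1$ is immediate from monotonicity), the interior gradient estimates for the constant mean curvature equation together with Schauder theory give $C^{2,\alpha}$ bounds for $u_k$ on compact subsets of $D^+$. By monotonicity and the Arzel\`a--Ascoli theorem, $u_k\to u$ in $C^2_{\mathrm{loc}}(D^+)$, so $u$ solves \eqref{PDE} with constant mean curvature $H$. Finally, on $\Gamma$ the strict convexity of the small sphere furnishes, for each fixed boundary value, matching upper and lower barriers, so $u$ extends continuously to $\Gamma$ with $u|_\Gamma=f$; the corner set $N=S\cap\Gamma$ carries no condition since $\mathcal H_{n-1}(N)=0$. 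Together with the $+\infty$ behavior on $S_\varepsilon$ this exhibits the desired Jenkins-Serrin solution in $D^+$, and the symmetric construction gives the $-\infty$ solution in $D^-$.

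I expect the finiteness step to be the whole difficulty. The existence of the approximating solutions and the $+\infty$ boundary behavior are essentially formal, but ruling out an interior blow-up set cannot be achieved by a pointwise barrier in this orientation and forces one to exploit the thinness of $D^+$ quantitatively: the real work is to convert the strict structure inequality of the limiting minimal model into a uniform interior height bound for the CMC-$H$ approximants as $\varepsilon\to0$, i.e.\ to control the constant mean curvature flux uniformly as the domain degenerates to a half-ball with a minimal face. This uniform estimate, rather than any single comparison function, is the core of the theorem and the place where the assumption that $\varepsilon$ be small is indispensable.
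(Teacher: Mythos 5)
First, a remark on the ground truth: this survey does not prove Theorem \ref{Spruck 2} at all --- it is quoted from Spruck \cite{Spruck-Infinite}, and the machinery the paper builds (Ilmanen's metric, the area-minimizing property, the compactness theorems of Section \ref{back}) is aimed at the \emph{necessity} statements (Theorem \ref{mainth}, Remark \ref{re:CMC}), not at this existence statement. So your attempt can only be measured against Spruck's strategy and the standard Jenkins--Serrin methodology. Your scaffolding is the right one: solvability of the Dirichlet problems $u_k$ (Serrin's condition holds with equality on $S\cap B_\varepsilon(p)$ and strictly on the spherical part $\Gamma$), monotonicity in $k$, automatic divergence on $S$, barriers on $\Gamma$ uniform in $k$, and interior gradient plus Schauder estimates once a local height bound is known. (Two caveats: $D^+$ has an edge where $S$ meets $\partial B_\varepsilon(p)$ and the data of $u_k$ jumps there, so the Dirichlet problems need Perron's method rather than a bare citation of Serrin; and $D^+$ is not a ``thin lens''--- it is approximately a \emph{half-ball}, and what must be exploited is its smallness, not any thinness.)

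The genuine gap is exactly where you locate it, but the mechanism you propose cannot close it. Grant (what already requires geometric measure theory in dimension $n$, cf.\ \cite{Giusti}, \cite{Massari} or Section \ref{back}) that the divergence set $V=\{x:u_k(x)\to\infty\}$ is open, that the divergence is locally uniform, and that the interface $W'=\partial V\cap D^+$ is regular enough to integrate over. Since the graphs become vertical along $W'$ with $u_k$ increasing toward $V$, the flux identity $\int_{\partial V}\langle \nabla u_k/W_k,\nu\rangle\,{\rm d}\mathcal{H}_{n-1}=H|V|$ yields in the limit
\begin{equation*}
\mathcal{H}_{n-1}(W')+H|V|\;\leq\;\mathcal{H}_{n-1}(\partial V\cap S).
\end{equation*}
This is the inequality you expect to produce a contradiction. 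But the only estimate available in the other direction is compatible with it: writing $d=\dist(\cdot,S)$, one has $\Delta d=-H_{S_d}$ with $H\leq H_{S_d}\leq H+C\varepsilon$ on $D^+$ (for $\varepsilon$ below the focal radius), and integrating $\Delta d$ over $V$ gives
\begin{equation*}
\mathcal{H}_{n-1}(\partial V\cap S)\;\leq\;\mathcal{H}_{n-1}(W')+(H+C\varepsilon)|V|.
\end{equation*}
The two inequalities are consistent for every $\varepsilon>0$ and every candidate $V$, with slack $C\varepsilon|V|$; so no area bookkeeping of this type forbids $V\neq\varnothing$, and smallness of $\varepsilon$ never enters through the flux identity. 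What is missing is a \emph{strictness} mechanism, and it must use finer information about $W'$: by the necessity statement (Remark \ref{re:CMC}, applied to the limit function on the convergence side, which blows up as one approaches $W'$), the interface $W'$ is a CMC hypersurface with mean curvature exactly $H$ with respect to the normal pointing out of $V$; on the other hand the parallel hypersurfaces $S_d$ foliate $D^+$ for small $\varepsilon$ and satisfy $H_{S_d}\geq H+cd$, $c>0$, since $\tfrac{d}{dd}H_{S_d}=\sum_i\kappa_i^2\geq H^2/(n-1)$. Sliding this foliation to the deepest leaf $S_{d^*}$ touching $\overline{V}$ produces a tangency between a hypersurface of mean curvature $H$ and one of mean curvature $\geq H+cd^*$ lying on the correct side, and the strong maximum principle forces $d^*=0$, i.e.\ $V=\varnothing$; this (or Spruck's explicit auxiliary supersolutions) is the kind of argument that has to replace your heuristic, together with the regularity theory needed to handle singular points of $W'$ when $n\geq 8$. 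Incidentally, your claim that any upper barrier ``must be a CMC-$H$ graph with $S$ as asymptotic boundary'' is a false dichotomy --- barriers need only be supersolutions and may live over auxiliary domains --- although your instinct is correct in the narrow sense that no supersolution of the form $\psi(d)$ blowing up at $d=0$ exists: the supersolution inequality forces $|\psi'|\lesssim d^{-1/2}$, hence $\psi$ bounded. As it stands, then, the core of your argument is a plausible plan rather than a proof.
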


Moreover, he obtained an analogous result to Theorem \ref{Jenkins-Serrin 3} making a clever use of suitable barriers. 

\begin{Theorem}[Spruck]\label{Spruck 3}
Let $\Omega\subset \R^n$ be a domain that is bounded in part by a $C^2$ hypersurface $S$.
Suppose that $u \colon \Omega\to\R$ is a solution of the CMC equation with constant mean curvature $H(>0)$ satisfying $u\to\infty$ $($respectively $u\to-\infty$ $)$ as $x\to S$. 
%where $\gamma$ is a smooth connected component of $\partial \Omega.$ 
Then $H_S=H$ (respectively $-H$), where $H_S$ denotes the mean curvature of $S$ in $\R^n.$
\end{Theorem}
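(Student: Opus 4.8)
The plan is to treat the case $u\to+\infty$ (the case $u\to-\infty$ follows at once by replacing $u$ with $-u$, which turns $H$ into $-H$ and yields the stated $-H$). The geometric heart of the matter is that as $u\to+\infty$ the graph $\Sigma=\graph(u)$ becomes vertical along $S$ and is asymptotic to the cylinder $S\times\R$, whose mean curvature equals $H_S$; continuity of the mean curvature under this degeneration is what should force $H_S=H$. To make this rigorous I would work with the distance function $\rho=\dist(\cdot,S)$ on the $\Omega$--side of $S$, whose gradient $\nu:=\nabla\rho$ is the unit normal to the parallel hypersurfaces $S_\rho=\{\rho=\mathrm{const}\}$ pointing into $\Omega$ and which satisfies $\Div\nu=\Delta\rho=H_{S_\rho}$, the mean curvature of $S_\rho$. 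Since $u\to+\infty$ as $\rho\to0^+$, the field $W:=\nabla u/\sqrt{1+|\nabla u|^2}$, which obeys $\Div W=H$ and $|W|<1$, must converge to $-\nu$ along $S$.

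The key computation is then $\Div(W+\nu)=H+H_{S_\rho}$. I would integrate this over a thin shell $R_{\varepsilon,\delta}$ obtained by flowing a fixed small patch $P_0\subset S$ inward along the normal geodesics between the levels $\rho=\varepsilon$ and $\rho=\delta$, and apply the divergence theorem. The lateral boundary of $R_{\varepsilon,\delta}$ is ruled by these geodesics, whose tangent is $\nu$, so the side--normal is orthogonal to $\nu$ and the $\nu$--part of the lateral flux vanishes; the $W$--part is controlled by $|W+\nu|\to0$ together with $|W|<1$, giving a lateral contribution of order $o(\delta)$. Letting $\varepsilon\to0$ the inner--cap contribution tends to $0$ because $W\cdot\nu+1\to0$, and one is left with
\[ \int_{P_\delta}\bigl(W\cdot\nu+1\bigr)\,dA_\delta+o(\delta)=\int_{R_{0,\delta}}\bigl(H+H_{S_\rho}\bigr)\,dV. \]

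Dividing by $\delta$ and letting $\delta\to0$, the right--hand side tends to $(H+H_S)\,\Area(P_0)$, with $H_S$ measured against $\nu$. Because the outer--cap integrand $W\cdot\nu+1=1-W\cdot(-\nu)\ge1-|W|\ge0$ is nonnegative while the lateral term is $o(\delta)$, the right--hand side is bounded below by $o(\delta)$, so $(H+H_S)\,\Area(P_0)\ge0$: one inequality — equivalently $H_S\le H$ for the outward normal of the statement — comes for free, using nothing more than $|W|\le1$. The reverse inequality $H_S\ge H$ is equivalent to showing that the nonnegative outer--cap term is in fact $o(\delta)$, i.e. to the quantitative rate $\int_{P_\delta}\bigl(1-W\cdot(-\nu)\bigr)\,dA=o(\delta)$, expressing that the unit normal of $\Sigma$ approaches that of the cylinder fast enough.

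This rate is the main obstacle, and it is precisely here that barriers enter, as in Theorem \ref{Jenkins-Serrin 3}. I would obtain it by a comparison argument: if $H_S(p)<H$ at some $p\in S$, then by the $C^2$ regularity of $S$ a model CMC--$H$ hypersurface — a sphere of the appropriate radius, or the local Jenkins--Serrin barrier furnished by Theorem \ref{Spruck 2} — can be placed tangent to $S$ at $p$ and lying on the $\Omega$--side, so that the comparison principle bounds $u$ from above near $p$, contradicting $u\to+\infty$. Carrying out this construction — fixing the radius, verifying the one--sided tangency from $H_S(p)<H$, and checking the boundary ordering needed for the maximum principle — is the delicate step; once the rate $o(\delta)$ is secured, the flux identity closes the argument and gives $H_S=H$ (respectively $-H$ in the $-\infty$ case).
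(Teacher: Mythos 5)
Your route is genuinely different from the paper's. The paper never argues through fluxes or barriers: it proves the general Riemannian version of this statement (Theorem \ref{mainth}(b), via Remark \ref{re:CMC}) by translating the graph downward, $\Sigma_i=\graph[u-u(x_i)]$, obtaining a local mass bound by reasoning as in Lemma \ref{Homology}, invoking the compactness theorem for stable CMC integral varifolds (Theorem \ref{Bellettini-Wickramasekera}) to extract a limit varifold $S_\infty$ whose support is shown to lie in the cylinder $\Lambda\times\R$, and then reading off $H_\Lambda=\pm H_0$ from Lemma \ref{Product Lemma}. Your plan is instead the classical Spruck-style one (flux identity plus barriers), which can in principle be made to work --- it is essentially how the quoted theorem was originally proved --- but as written it has two genuine gaps.

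First, the assertion that $W$ ``must converge to $-\nu$ along $S$'' is never proved, and it is not a soft consequence of $u\to+\infty$: it is exactly the statement that the graph becomes vertical at $S$ with the correct orientation, and in every known proof (classical or modern) this is where the analytic work is concentrated. Your flux computation depends on it twice: knowing only $|W|<1$, the inner-cap term $\int_{P_\varepsilon}\bigl(W\cdot\nu+1\bigr)\,dA$ is bounded but has no reason to vanish as $\varepsilon\to0$, and the lateral flux is only $O(\delta)$ rather than $o(\delta)$. Consequently even the inequality you call ``free'' is not free; with $|W|\le1$ alone, the shell identity yields nothing.

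Second, the comparison step would fail as described. Any comparison domain reaching up to $p$ --- say the interior tangent ball $B_r(q)$ --- has $u$ unbounded on $\partial B_r(q)\setminus\{p\}$, because those boundary points accumulate at $p$ and $u\to+\infty$ there; so no finite supersolution such as a spherical cap can satisfy the boundary ordering $\liminf(\psi-u)\ge0$ off a negligible set. This is not the situation of a maximum principle with finitely many exceptional points (cf.\ Proposition \ref{Max. Principle}): the hypothesis fails on a whole boundary neighbourhood of $p$, not at a single point. This is precisely why Spruck's barriers are the local Jenkins--Serrin solutions of Theorem \ref{Spruck 2} --- barriers which are themselves $+\infty$ on an auxiliary hypersurface of mean curvature exactly $H$ --- and why the contradiction they produce requires a subtler argument than ``$u$ is bounded above near $p$.'' Since you defer exactly this construction, the second inequality, and with it the theorem, remains unproved.
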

Further results about Jenkins-Serrin solutions in higher dimensions can be found in \cite{Giusti, Massari}.

B. Nelli and H. Rosenberg \cite{Nelli-Rosenberg} extended the Jenkins-Serrin theory for domains in the hyperbolic plane $\mathbb{H}^2$, and  after that, A. L. Pinheiro \cite{Pinheiro} to more general Riemannian surfaces.
%proved the general theorem for any Riemannian surface $M$. 
%More precisely, they proved a general existence theorem for Jenkins-Serrin solutions over geodesically convex domains in any Riemannian surface.
%
%\begin{Theorem}[Nelli-Rosenberg; Pinheiro]\label{Nelli-Rosenberg; Pinheiro}
%Let M be a complete Riemannian surface and $\Omega\subset M$ be a geodesically convex bounded domain. Suppose that $\partial\Omega$ consists of finite number of open geodesic arcs $A_1,\ldots,A_r,B_1\ldots,B_s$, finite number of convex arcs $C_1,\ldots,C_t$ with respect to $\Omega$ and the endpoints of these arcs. Assume that no two $A_i$ and no two $B_i$ have a common endpoint.  Given any continuous data $f_i \colon C_i\to\R$, there exists a Jenkins-Serrin solution  $u \colon \Omega\to\R$ of the minimal graph equation with continuous data \(u|_{C_i}=f_i\), if and only if, for any admissible polygon $\mathcal{P}$ we have
%\begin{equation}\label{structure general}
%2\alpha(\mathcal{P})<\ell(\mathcal{P})\ \operatorname{and}\ 2\beta(\mathcal{P})<\ell(\mathcal{P}).
%\end{equation}
%If $\{C_i\}=\varnothing$, the condition \eqref{structure general} is $\alpha(\partial \Omega)=\beta(\partial\Omega)$ for $\mathcal{P}=\partial\Omega.$ Moreover, if $\{C_i\}\neq\varnothing,$ then $u$ is unique, and if $\{C_i\}=\varnothing,$ then $u$ is unique up to an additive constant.
%\end{Theorem}

%\rojo{\begin{Remark}
%In fact, as we remarked for L. Mazet, M. Rodr\'iguez and H. Rosenberg, the theorem above is also true for convex domain. See \cite{Mazet-Rodriguez-Rosenberg}.
%\end{Remark}}

Although we are only considering the case of bounded domains, we would like to mention that  L. Mazet, M. Rodr\'iguez and H. Rosenberg \cite{Mazet-Rodriguez-Rosenberg} proved that, in the minimal case, it is possible to get Jenkins-Serrin solutions over non-compact domains in the hyperbolic plane $\mathbb{H}^2.$ Their result generalizes previous results obtained by P. Collin and H. Rosenberg in \cite{Collin-Rosenberg}. To state the result, we use the following notation.

We say that a domain $\Omega\subset\mathbb{H}^2$ is an ideal Scherk domain if $\partial_\infty\Omega$ consists of a finite number of geodesic arcs $A_i,B_i,$ a finite number of convex arcs $C_i$ (with respect to $\Omega$), a finite number of open arcs $D_i$ at $\partial_\infty\mathbb{H}^2,$ together with their endpoints (called the vertices of $\Omega$) and no two $A_i$ and no two $B_i$ have a common end point. Now let $\Omega$ be an ideal Scherk domain; a polygon $\mathcal{P}\subset\Omega$ is said to be an admissible polygon if its vertices are among the vertices of $\Omega.$ For any ideal vertex $p_i$ of $\Omega$ at $\partial_\infty\mathbb{H}^2$, let $H_i$ be a horocycle at $p_i$ such that for any $i\neq j$ we have $H_i\cap H_j=\varnothing$, and $H_i$ does not intersect the bounded edges of $\partial\Omega.$ Finally, given any admissible polygon $\mathcal{P}\subset\Omega$, denote by $\Gamma(\mathcal{P})$ the part of $\partial\mathcal{P}$ outside the horocycles $H_i$ and call
\[
\alpha(\mathcal{P})=\sum_{A_i\subset\mathcal{P}}||A_i\cap\Gamma(\mathcal{P})||\ \text{and} \ \beta(\mathcal{P})=\sum_{B_i\subset\mathcal{P}}||B_i\cap\Gamma(\mathcal{P})|| 
	%\ \operatorname{and} \ \ell(\mathcal{P})=perimeter(\Gamma(\mathcal{P})).
\]

With this notation, we can state their theorem. %the Mazet-Rodr\'igues-Rosenberg Theorem.

\begin{Theorem}[Mazet-Rodr\'iguez-Rosenberg]\label{Mazet-Rodriguez-Rosenberg}
Suppose that there is at least one edge $C_i$ or $D_i$ in $\partial_\infty\Omega$. Then there exists a Jenkins-Serrin solution with continuous data $f_i \colon C_i\to\R$ and $g_i \colon D_i\to\R$ if, and only if, the horocycles $H_i$ can be chosen so that 
\begin{equation*}\label{structure for H^2}
2\alpha(\mathcal{P})< ||\Gamma(\mathcal{P})||\ \operatorname{and}\ 2\beta(\mathcal{P}) < ||\Gamma(\mathcal{P})||.
\end{equation*}
for any admissible polygon $\mathcal{P}$ in $\Omega$.
\end{Theorem}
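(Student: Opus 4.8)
The plan is to run both implications through the divergence structure of the minimal surface equation. For a solution $u$ of \eqref{PDE} with $H=0$, the \emph{conormal field}
\[
X_u=\frac{\nabla u}{\sqrt{1+|\nabla u|^2}}
\]
is divergence free, $\Div X_u=0$, satisfies $|X_u|<1$ in the interior, and approaches the outward unit conormal of a boundary arc exactly along the arcs where $u\to+\infty$ and its negative along the arcs where $u\to-\infty$. All the length inequalities will come from integrating $\Div X_u=0$ and reading off the boundary flux.

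For the \emph{necessity} of the structure conditions I would fix an admissible polygon $\mathcal P$, truncate its edges by the chosen horocycles $H_i$ so as to integrate over a region bounded by the finite-length curve $\Gamma(\mathcal P)$, and apply the divergence theorem to $X_u$. The resulting flux identity has a contribution $+\|A_i\cap\Gamma(\mathcal P)\|$ along each geodesic edge $A_i$ (because $u\to+\infty$ there, so $X_u\to\nu$), a contribution $-\|B_i\cap\Gamma(\mathcal P)\|$ along each $B_i$, a contribution of absolute value \emph{strictly} less than the arc length along every remaining edge (where $|X_u|<1$), and horocyclic terms that are controlled because the $H_i$ are pairwise disjoint and avoid the bounded edges. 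Writing the total flux as $\alpha(\mathcal P)-\beta(\mathcal P)+F=0$ with $|F|<\|\Gamma(\mathcal P)\|-\alpha(\mathcal P)-\beta(\mathcal P)$ then yields both $2\alpha(\mathcal P)<\|\Gamma(\mathcal P)\|$ and $2\beta(\mathcal P)<\|\Gamma(\mathcal P)\|$.

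For \emph{sufficiency} I would produce the solution as a monotone limit, using that the hypothesis guarantees at least one edge $C_i$ or $D_i$ on which the finite data $f_i,g_i$ anchor the construction. Exhaust $\Omega$ by compact subdomains and on each solve the minimal-graph Dirichlet problem with boundary values equal to $f_i,g_i$ on the $C_i,D_i$, to $+n$ on the $A_i$, and to $-n$ on the $B_i$; these finite problems are solvable because geodesics are minimal barriers in $\h^2$. Using the comparison principle I would arrange monotonicity in $n$, and interior gradient estimates for minimal graphs give $C^\infty_{\mathrm{loc}}$ compactness, so along a subsequence the family either converges to a solution $u$ or diverges to $\pm\infty$ locally uniformly on some subdomain. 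Once convergence is known, the prescribed behaviour --- $u\to+\infty$ on the $A_i$, $u\to-\infty$ on the $B_i$, and continuous/asymptotic attainment of $f_i,g_i$ --- is obtained by sandwiching $u$ between sub- and supersolutions built from geodesic and horocycle barriers.

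The crux, and the step I expect to be the main obstacle, is the \emph{monotone convergence alternative}: ruling out that the approximating solutions diverge on a proper subdomain $U\subsetneq\Omega$. Here I would first show, as in the analogue of Theorem \ref{Jenkins-Serrin 3}, that any curve along which a sequence of minimal solutions blows up must be a geodesic, so that $\partial U$ is made of geodesic arcs and determines an admissible polygon $\mathcal P$. Running the flux identity of the necessity part on $\mathcal P$, but now with the limiting conormal field equal to $\pm\nu$ \emph{exactly} along $\partial U$, forces the equality $2\alpha(\mathcal P)=\|\Gamma(\mathcal P)\|$ (or its analogue for $\beta$), contradicting the strict structure inequalities. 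The genuinely delicate point is to make this flux accounting rigorous near the ideal vertices at $\partial_\infty\h^2$: one must check that the horocycles $H_i$ can be chosen so that the asymptotic flux contributions across them are negligible and do not consume the strict gap, which is precisely the role played by the freedom to choose the $H_i$ in the statement.
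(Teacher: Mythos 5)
First, a point of order: the paper does not prove Theorem \ref{Mazet-Rodriguez-Rosenberg} at all. It is a survey item, quoted verbatim from the cited work of Mazet, Rodr\'iguez and Rosenberg \cite{Mazet-Rodriguez-Rosenberg}, so there is no proof in this paper against which your attempt can be matched. Judged on its own merits, your outline does reconstruct the broad architecture of the original argument (and of the classical Jenkins--Serrin scheme it descends from): flux identities for the divergence-free conormal field, truncated near the ideal vertices by horocycles, give necessity; an approximation scheme whose convergence is governed by the structure inequalities gives existence.

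As a proof, however, the proposal has concrete gaps precisely where the cited work does its real work. First, in an ideal Scherk domain the edges $A_i$, $B_i$ are complete geodesics of infinite length with endpoints on $\partial_\infty\mathbb{H}^2$, and the arcs $D_i$ lie entirely on $\partial_\infty\mathbb{H}^2$; your approximating Dirichlet problems (data $+n$ on $A_i$, $-n$ on $B_i$, data $f_i,g_i$ on $C_i,D_i$) are therefore posed on non-compact domains with \emph{asymptotic} boundary data, and solving them requires the asymptotic Dirichlet theory and Scherk-type barriers at infinity developed by Collin--Rosenberg \cite{Collin-Rosenberg} and extended in \cite{Mazet-Rodriguez-Rosenberg}, not merely the fact that geodesics are barriers. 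Second, the step you yourself flag as the crux --- ruling out divergence of the approximating sequence on a proper subdomain --- is exactly the content of the divergence-line machinery of \cite{Mazet-Rodriguez-Rosenberg}: one must prove that the blow-up set consists of complete geodesics, that their endpoints are vertices of $\Omega$ or ideal points, and that the flux across the horocycle truncations can be made negligible by the allowed choice of the $H_i$; only then does the flux identity contradict the strict inequality $2\alpha(\mathcal P)<\|\Gamma(\mathcal P)\|$. You state this step as a goal rather than establish it, and the same issue infects the necessity direction, where the bookkeeping $\alpha(\mathcal P)-\beta(\mathcal P)+F=0$ with $|F|<\|\Gamma(\mathcal P)\|-\alpha(\mathcal P)-\beta(\mathcal P)$ hides the contribution of the horocycle arcs themselves, which do not belong to $\Gamma(\mathcal P)$ and must be controlled by a suitable choice of the $H_i$. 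In short: a correct road map, consistent with the strategy of the cited proof, but not a proof.
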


Around the same time, J. G\'alvez and H. Rosenberg \cite{Galvez-Rosenberg} proved similar result for simply connected and complete Riemannian surfaces whose Gaussian curvature is bounded from above by a negative constant.

%\begin{Remark}
%This theorem was generalized for J. G\'alvez and H. Rosenberg for any Riemann surface with negative sectional curvature. See \cite{Galvez-Rosenberg}.
%\end{Remark}

Concerning the CMC case, L. Hauswirth, H. Rosenberg and J. Spruck \cite{Hauswirth-Rosenberg-Spruck} proved the existence of Jenkins-Serrin solutions in $\mathbb{H}^2$ and $\mathbb{S}^2$. Later A. Folha and H. Rosenberg \cite{Folha-Rosenberg} generalized these results to the case of Hadamard surfaces.

We would like to point out that Folha and Rosenberg also proved the following theorem that provides a relationship between the mean curvature of the graph and the mean curvature of the boundary of the domain.

\begin{Theorem}[Folha-Rosenberg]\label{Folha-Rosenberg 2}
Let $M$ be a Hadamard surface and $\Omega\subset M$ be a bounded domain. Suppose that $u \colon \Omega\to\R$ is a solution of CMC equation with constant mean curvature $H(>0)$ satisfying $u\to\infty$ $($respectively $u\to-\infty$ $)$ as $x\to\gamma,$ where $\gamma$ is a smooth connected component of $\partial \Omega.$ Then $H_\gamma=H$ (respectively $-H$), where $H_{\gamma}$ denotes the mean curvature of $\gamma$ in $M.$
\end{Theorem}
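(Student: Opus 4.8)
The plan is to identify the limiting geometry of the graph near $\gamma$ with the vertical cylinder $C_\gamma := \gamma\times\R\subset M\times\R$ and to exploit the fact that $\Sigma=\graph(u)$ is a \emph{stable} surface of constant mean curvature. The starting observation is purely geometric: since $C_\gamma$ is a Riemannian product, one of its principal directions is the vertical $\partial_t$ (with vanishing normal curvature) and the other is tangent to $\gamma$, so with the paper's convention the trace of its second fundamental form equals the geodesic curvature $\kappa_\gamma=H_\gamma$ of $\gamma$ in $M$, computed with respect to the horizontal unit normal. Thus the theorem reduces to showing that the limit of $\Sigma$ as $x\to\gamma$ is an open piece of $C_\gamma$, \emph{as a surface of mean curvature $H$}; the equality $H_\gamma=H$ (respectively $-H$) then follows because mean curvature is continuous under $C^2$ convergence, with the sign dictated by whether the outward conormal of $\Sigma$ tends to the outward or the inward conormal of $\Omega$ along $\gamma$, that is, by whether $u\to+\infty$ or $u\to-\infty$.

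First I would record that $\Sigma$ is stable. The vertical translations of $M\times\R$ are generated by the Killing field $\partial_t$, whose normal component $\langle\partial_t,N\rangle=1/\sqrt{1+|\nabla u|^2}>0$ is a positive solution of the Jacobi (stability) equation on $\Sigma$; a constant mean curvature surface carrying a positive Jacobi function is stable. Next I would consider the downward translates $\Sigma^c:=\graph(u-c)$, which are again stable surfaces of constant mean curvature $H$, and examine their intersection with a fixed slab $M\times[-1,1]$. Because $u\to+\infty$ only on $\gamma$, the region $\{c-1\le u\le c+1\}$ over which $\Sigma^c\cap(M\times[-1,1])$ projects is a thin strip collapsing onto $\gamma$ as $c\to\infty$, and the blow-up $|\nabla u|\to\infty$ there forces these graph pieces to become vertical. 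Applying the curvature estimates for stable constant mean curvature surfaces in a three-manifold of locally bounded geometry, I would extract from $\{\Sigma^c\}$ a subsequence converging in $C^2$ on the slab to a stable surface $\Sigma^\infty$ of mean curvature $H$; verticality of the limit together with the fact that its projection is exactly $\gamma$ identifies $\Sigma^\infty$ with an open piece of $C_\gamma$, whence $H_\gamma=H$.

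The hard part will be the two issues hidden in that last sentence: securing \emph{uniform} second-order (curvature) bounds along the translates so that a smooth limit exists, and \emph{identifying} that limit as the genuine cylinder rather than a degenerate or lower-dimensional object, i.e. ruling out collapse and checking that the shadow of $\Sigma^\infty$ is precisely $\gamma$. The stability of $\Sigma$ is exactly what powers the curvature estimates, while the graphical structure and the hypothesis that $u$ blows up only on $\gamma$ are what pin down the limit.

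An alternative route, closer to the barrier technique behind Theorem \ref{Spruck 3}, is to argue by contradiction: if $H_\gamma\neq H$ at some $p\in\gamma$, one inserts between $\Sigma$ and $C_\gamma$ the vertical cylinder over an auxiliary curve $\beta$ of constant geodesic curvature strictly between $H$ and $H_\gamma(p)$, tangent to $\gamma$ at $p$ and curving the same way. Such a cylinder is, to one side of $\beta$, the graph of a function solving \eqref{PDE} with $H$ replaced by $\kappa_\beta$ and blowing up along $\beta$; comparing it with $u$ on a small region pinched between $\beta$ and $\gamma$, via the comparison principle for the quasilinear operator in \eqref{PDE}, forces a blow-up of $u$ along the interior curve $\beta$, which is impossible. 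The delicate points here are the correct orientation and sign bookkeeping in the comparison principle and the verification of the boundary inequalities on the pinched region; these are precisely the steps that must be arranged so that the auxiliary cylinder sits on the side that yields the contradiction, and ruling out both $H_\gamma(p)>H$ and $H_\gamma(p)<H$ gives $H_\gamma=H$.
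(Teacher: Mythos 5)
Your main route is correct in outline, but it is not the paper's route: it is essentially the original Folha--Rosenberg/Shahriyari argument, which this paper deliberately set out to replace. You feed the stability of the graph (via the positive Jacobi function $\langle\partial_t,N\rangle$) into Schoen-type curvature estimates for stable CMC surfaces in a $3$-manifold, to get smooth subconvergence of the downward translates $\graph(u-c)$. The paper instead (Remark \ref{re:CMC}, following the scheme of Theorem \ref{main}) combines the same translation-and-stability setup with a local \emph{area} bound $\mathcal{A}_0[S_i]\leq \mathcal{A}_0[\partial B]+H_0\operatorname{Vol}(B)$, obtained by the flux/calibration computation of Lemma \ref{Homology}, and then invokes the compactness theorem for stable CMC integral varifolds (Theorem \ref{Bellettini-Wickramasekera}). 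What each buys: your route is more self-contained and needs no geometric measure theory (curvature estimates for stable surfaces in $3$-manifolds require no area bounds), and it suffices here because $M$ is a surface; the paper's route is dimension-free, which is exactly why it yields Theorem \ref{mainth} in all dimensions, whereas yours is confined to ambient dimension three. Both proofs share the remaining skeleton: translate down, pass to a limit, identify the limit with a piece of $\gamma\times\R$, and read off $H_\gamma=\pm H$ from the product structure (Lemma \ref{Product Lemma}).

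Two cautions. First, in the identification step you lean on ``$|\nabla u|\to\infty$ forces verticality''; pointwise gradient blow-up is neither immediate from $u\to\infty$ nor needed. The clean argument (the paper's) is that no point lying over a compact subset of $\Omega$ can be an accumulation point of the translates, because over such a set $u-c\to-\infty$ uniformly; hence every limit point lies over $\partial\Omega$, and a smooth $2$-dimensional limit contained in the $2$-dimensional cylinder $\gamma\times\R$ must be an open piece of it. This also settles your ``shadow is precisely $\gamma$'' issue. Second, your alternative barrier route contains an error as written: the vertical cylinder over $\beta$ is not the graph of any function, so it cannot be inserted into the comparison principle for the quasilinear operator in \eqref{PDE}; one would need either the geometric tangency principle for surfaces after sliding the cylinder, or genuine CMC graphical barriers as in Spruck's proof of Theorem \ref{Spruck 3}, rather than the PDE comparison you describe.
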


Notice that a fact repeatedly appears  in Theorems \ref{Jenkins-Serrin 3},  \ref{Spruck 3} and  \ref{Folha-Rosenberg 2}. 
Namely, they showed that if we have a Jenkins-Serrin solution over a domain $\Omega$, then the mean curvature of $\partial\Omega$ and the mean curvature of the surface given by the graph of the Jenkins-Serrin solution are equal, up to a sign. This suggests the following question:

\begin{question} \em How does the existence of complete graphs with prescribed mean curvature over a regular domain affect the geometrical structure of its boundary? \end{question}

In relation to the translating graphs, L. Shahriyari \cite{Shariyari} obtained a complete result for translating graphs over regular domains in $\R^2$. The main idea in Shahriyari's proofs is to use Schoen's type estimates for stable minimal surfaces in $3$-dimensional manifolds, an important tool in Colding \& Minicozzi theory, to get information about the behaviour of the graph as we approach a component $\gamma$ of the boundary, like Folha and Rosenberg do in \cite{Folha-Rosenberg}. Finally, she concluded that $\gamma$ is a geodesic or has constant principal curvature in a CMC case. The main drawback of their method is that their proofs do not work in higher dimension. %It is a natural question to  get an extension for higher dimensions. Indeed, as we will see later, the answer to this question is yes. More precisely, 

Using a quite different strategy M. Eichmair and J. Metzger \cite{eichmair-metzger} were able to improve earlier result by Spruck (Theorem \ref{Spruck 0}), in the case where $\{C_i\}=\emptyset$, into Riemannian manifolds of dimension $2\le n\le 7$. Namely, they didn't need the existence of an auxiliary domain $\Omega^*$ nor a priori assumptions about existence of subsolutions, see \cite[Section 6.1]{eichmair-metzger}. On the other hand, inspired by a classical compactness results of stable hypersurfaces by R. Schoen and L. Simon \cite{Schoen-Simon}, Eichmair and Metzger gave a very simple proof that makes it possible to get an extension of Theorem \ref{Jenkins-Serrin 3}, Theorem \ref{Spruck 3} and Theorem \ref{Folha-Rosenberg 2} for any $n$-dimensional Riemannian manifold for $2\leq n\leq7$. We will show that the same method can be applied for any Riemannian manifold without any restriction on the dimension. More precisely, we prove the following theorem.

\begin{Theorem} \label{mainth}
Let $M$ be a complete Riemannian manifold and $\Omega\subset M$ be a domain with piecewise smooth boundary. Let $\Lambda\subset\partial\Omega$ be a smooth connected region and let $\Sigma$ be the graph of a smooth function $u$ on $\Omega$ in $M\times\R$.
Suppose $\Sigma$ is complete as we approach $\Lambda$. Then, we have
\begin{enumerate}[a.]
\item If $\Sigma$ is a translating soliton, then $H_{\Lambda}
=0$;
\item If $\Sigma$ has constant mean curvature $H_0$ (with respect to the upward pointing normal vector field), then $H_{\Lambda}= H_0$, up to the sign.
\end{enumerate}
\end{Theorem}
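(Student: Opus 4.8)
The plan is to exploit the vertical translation symmetry of the product $M\times\R$ together with the stability that is automatic for graphs, and then to identify the limit of the vertically translated graphs with the cylinder $\Lambda\times\R$, on which the mean-curvature relation can be read off directly. After possibly replacing $u$ by $-u$, I assume $u\to+\infty$ as $x\to\Lambda$; this is exactly the meaning of ``$\Sigma$ complete as we approach $\Lambda$'', since otherwise the points of $\Sigma$ lying over $\Lambda$ would be at finite intrinsic distance. For $s\in\R$ let $T_s\colon M\times\R\to M\times\R$, $T_s(x,t)=(x,t-s)$, which is an isometry of $(M\times\R,g_0)$ preserving the relevant structure: the CMC equation is invariant under ambient isometries, and the translating soliton equation $\vec H=(\partial_t)^{\perp}$ is invariant under translations in the $\partial_t$-direction. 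Writing $\Sigma_s=T_s(\Sigma)=\graph(u-s)$, every $\Sigma_s$ therefore solves the same equation as $\Sigma$.

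The key structural input is stability, which I would obtain from a foliation argument. The family $\{\Sigma_s\}_{s\in\R}$ foliates $\Omega\times\R$ by hypersurfaces all having the same constant mean curvature $H_0$ (resp. all being translators), and the lapse $\langle\partial_t,\nu\rangle=1/\sqrt{1+|\nabla u|^2}>0$ is a positive solution of the associated Jacobi operator along each leaf. By the standard criterion of Fischer-Colbrie and Schoen (and of Barbosa and do Carmo in the CMC setting), a positive Jacobi field forces each $\Sigma_s$ to be stable. Here it is convenient to recall Ilmanen's observation that a translator is a minimal hypersurface for a conformally modified metric on $M\times\R$, so that in both cases one is dealing with stable stationary hypersurfaces for a (possibly weighted) area functional. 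Fixing $q\in\Lambda$ and $R>0$, the portions $\Sigma_s\cap B_R\big((q,0)\big)$ are graphs over thin collars of $\Lambda$, and comparison with a fixed cap (or the monotonicity formula) gives a uniform local mass bound for these stationary hypersurfaces.

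Combining stability with the uniform local area bound, the compactness theory of Schoen and Simon \cite{Schoen-Simon} yields a subsequence $\Sigma_{s_k}$ converging (in the varifold/current sense, smoothly away from a possible singular set) to a stable limit $\Sigma_\infty$ solving the same equation. Since $T_a\Sigma_\infty=\lim_k T_a\Sigma_{s_k}=\lim_k\Sigma_{s_k+a}=\Sigma_\infty$ for every $a\in\R$, the limit is invariant under all vertical translations, hence a cylinder $\gamma\times\R$; and because the base points of $\Sigma_{s_k}$ inside $B_R$ accumulate on $\Lambda$ (as $u\to+\infty$ precisely on $\Lambda$), we get $\Sigma_\infty=\Lambda\times\R$, which is smooth by hypothesis. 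On such a vertical cylinder the unit normal is horizontal and the vertical lines are geodesics, so its mean curvature as a hypersurface of $M\times\R$ equals the mean curvature $H_\Lambda$ of $\Lambda$ in $M$. Passing the equation to the limit now finishes both cases: in case (a) the soliton relation gives $\vec H=(\partial_t)^{\perp}=0$, since $\partial_t$ is tangent to the cylinder, whence $H_\Lambda=0$; in case (b) the limit is CMC with the same value $H_0$, but the horizontal normal may point to either side of $\Lambda$, so $H_\Lambda=H_0$ only up to sign.

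The main obstacle is precisely the compactness step, because the classical Schoen and Simon curvature estimates are available only in the range $2\le n\le 7$, which is what restricts the Eichmair--Metzger argument. To remove the dimension restriction I would bypass those estimates and instead use the graphical character of the $\Sigma_s$: interior gradient and curvature estimates for solutions of \eqref{PDE} (and of its translator counterpart), together with the \emph{a priori} smoothness of the expected limit $\Lambda\times\R$, should upgrade the varifold convergence to smooth convergence onto the cylinder in every dimension. The delicate point to verify is that no singular set can form in the limit, i.e. that the graphical/stable structure is genuinely preserved in the blow-down, and it is here that the argument must be carried out with care so that the final mean-curvature identity $H_\Lambda=0$ (resp. $H_\Lambda=\pm H_0$) holds without any bound on $n$.
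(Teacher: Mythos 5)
Your proposal follows the same skeleton as the paper's proof of Theorem \ref{main} and Remark \ref{re:CMC} --- vertical translates $\graph(u-s)$, Ilmanen's metric for the translator case, a local mass bound by comparison with a cap (this is exactly the role of Lemma \ref{Homology}), a compactness theorem, identification of the limit with the cylinder $\Lambda\times\R$, and reading off $H_\Lambda$ via Lemma \ref{Product Lemma} --- but it breaks down precisely at the step you yourself flag as delicate, and the repair you propose cannot work. Near $\Lambda\times\R$ the translated graphs become vertical ($|\nabla u|\to\infty$ as $x\to\Lambda$ is forced by the fact that the limit is a vertical cylinder), so interior gradient and curvature estimates for the graph equation \eqref{PDE}, which require either a gradient bound or a fixed distance to the boundary of the domain, degenerate exactly in the region of interest; they cannot upgrade varifold convergence to smooth convergence, and in high dimensions one cannot expect to prove that ``no singular set can form.'' The paper's key observation is that one never needs to rule out the singular set: Theorem \ref{Wickramasekera} (Schoen--Simon/Wickramasekera) is valid in \emph{every} dimension and gives smooth convergence away from a closed singular set of Hausdorff dimension at most $n-7$. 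Since that set is nowhere dense in the limit, one runs the escape argument at regular points to conclude that every regular piece of $S_\infty$ near $(x_0,0)$ lies on $\Lambda\times\R$; hence $\widetilde H_{\Lambda\times\R}$ vanishes on a dense subset of the cylinder near $(x_0,0)$, and by continuity and Lemma \ref{Product Lemma} one gets $H_\Lambda(x_0)=0$. Tolerating the singular set via density and continuity, instead of excluding it, is exactly what removes the restriction $2\le n\le 7$ of \cite{eichmair-metzger}; this idea is absent from your proposal. Moreover, for case (b) Schoen--Simon is not applicable at all: there is no Ilmanen-type conformal metric making CMC graphs minimal, so the paper must invoke the Bellettini--Wickramasekera compactness theorem for stable CMC integral varifolds (Theorem \ref{Bellettini-Wickramasekera}), together with the modified mass bound $\mathcal{A}_0[S_i]\le\mathcal{A}_0[\partial B]+H_0\operatorname{Vol}(B)$ and stability of CMC graphs.

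There is a second, independent gap in your identification of the limit. The chain $T_a\Sigma_\infty=\lim_k T_a\Sigma_{s_k}=\lim_k\Sigma_{s_k+a}=\Sigma_\infty$ is unjustified at the last equality: convergence holds only along the chosen subsequence $s_k$, and nothing guarantees that the different sequence $\Sigma_{s_k+a}$ converges to the \emph{same} subsequential limit. And even granting translation invariance, you would only know that the limit is some vertical cylinder whose support meets $\Lambda\times\R$; excluding components of the support over the interior of $\Omega$ still requires an argument. The paper's direct route makes both points moot: since $u$ is locally bounded on compact subsets of $\Omega$, over any compact cylinder $C$ whose projection is compactly contained in $\Omega$ the translates $\graph(u-u(x_i))$ escape to $-\infty$, so $S_i\cap C=\varnothing$ for large $i$; hence no regular point of the limit (and, in the CMC case, no point of $\operatorname{spt}S_\infty$, by the test-function argument of Remark \ref{re:CMC}) lies over the interior of $\Omega$, and inside the small ball $B$ (chosen to avoid $\partial\Lambda\times\R$) the support must lie over $\Lambda$. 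A final small point: the monotonicity formula gives lower, not upper, mass bounds, so your parenthetical alternative does not produce the area estimate; the cap comparison requires the minimizing property of the graphs (Lemma \ref{Homology} and its CMC analogue), not merely the stability you derive from the positive Jacobi field.
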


The crucial ingredient in our proof is a local area estimate for the components of the graph in closed balls around the cylinder over the boundary. These area estimates, combined with a suitable compactness theorems (Theorem \ref{Wickramasekera} in the minimal case
and Theorem \ref{Bellettini-Wickramasekera} for CMC hypersurfaces), yield precise information of the geometric structure of the domain through a surprisingly simple local analysis.

In the last section we consider the existence of Jenkins-Serrin type solutions to the translating soliton equation and prove the following result.
\begin{Theorem}\label{Existence type 1}
Let $\Omega\subset M$ be an admissible domain with $\{B_i\}=\varnothing.$ Given any continuous data $f_i \colon C_i\to\R$, there exists a Jenkins-Serrin solution $u\colon \Omega\to\R$ for translating equation with continuous data \(u|_{C_i}=f_i\) if for any admissible polygon $\mathcal{P}$ we have
\begin{equation*}
2\alpha(\mathcal{P})<\ell(\mathcal{P}).
\end{equation*}
\end{Theorem}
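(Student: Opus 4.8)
The plan is to obtain the solution as the monotone limit of graphs solving auxiliary Dirichlet problems with finite boundary data, following the classical Jenkins--Serrin scheme, the only structural novelty being the extra right-hand side of the translating soliton equation
\[
\Div\left(\frac{\nabla u}{\sqrt{1+|\nabla u|^2}}\right)=\frac{1}{\sqrt{1+|\nabla u|^2}}.
\]
First I would fix the data $f_i$ on the arcs $C_i$ and, for each $k\in\n$, solve the Dirichlet problem for this equation with boundary value $k$ on every arc $A_i$ and $f_i$ on each $C_i$ (interpolated near the vertices in $N$). Solvability of each such problem follows from the comparison principle for this quasilinear elliptic operator together with upper and lower barriers provided by the geometric conditions built into admissibility: since, by Theorem \ref{mainth}(a), the arcs $A_i$ on which a translating graph becomes vertical must be minimal, and the $C_i$ are convex toward $\Omega$, translating graphs over small caps serve as local barriers, and the standard height and interior gradient estimates for the translator equation then yield a solution $u_k\in C^\infty(\Omega)\cap C^0(\overline\Omega\setminus N)$.

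Next I would record the two facts on which everything rests. By the comparison principle the sequence $\{u_k\}$ is monotone increasing, so at each point of $\Omega$ it either converges or tends to $+\infty$; let $U\subset\Omega$ be the open divergence set. Second, for any solution $u$ and any subdomain $D$ the divergence theorem applied to the equation gives the flux identity
\[
\int_{\partial D}\frac{\langle\nabla u,\eta\rangle}{\sqrt{1+|\nabla u|^2}}\,ds=\int_{D}\frac{dA}{\sqrt{1+|\nabla u|^2}},
\]
where $\eta$ is the outward conormal. The integrand on the left has absolute value strictly below $1$, while the right-hand side lies in $(0,\Area(D)]$ and, along the sequence, tends to $0$ on any region where $|\nabla u_k|\to\infty$.

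The heart of the argument is to show that the hypothesis forces $U=\varnothing$. By the standard convergence/divergence lemma, on $U$ one has $|\nabla u_k|\to\infty$ locally uniformly, and the interior boundary $\Gamma=\partial U\cap\Omega$ consists of arcs joining vertices of $\Omega$; since the right-hand side contributes nothing in the blow-up limit, these divergence arcs are minimal, i.e. geodesics, by the same mechanism as in Theorem \ref{mainth}(a). Thus $\overline U$ is an admissible polygon $\mathcal P$ whose boundary is the disjoint union of arcs $A_i\subset\partial\Omega$ and the interior geodesics $\Gamma$, so $\ell(\mathcal P)=\alpha(\mathcal P)+\|\Gamma\|$. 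Applying the flux identity on $D=U$ and letting $k\to\infty$, the left-hand side tends to $+\alpha(\mathcal P)$ along the $A_i$ (where $\langle\nabla u_k,\eta\rangle\to+\infty$) and to $-\|\Gamma\|$ along $\Gamma$ (where the gradient points into $U$), while the right-hand side tends to $0$. Hence $\alpha(\mathcal P)=\|\Gamma\|=\ell(\mathcal P)-\alpha(\mathcal P)$, that is $2\alpha(\mathcal P)=\ell(\mathcal P)$, contradicting $2\alpha(\mathcal P)<\ell(\mathcal P)$. Therefore $U=\varnothing$, and interior elliptic estimates show that $u_k$ converges, locally uniformly together with all its derivatives, to a solution $u$ of the translator equation on $\Omega$.

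It remains to check the boundary behaviour. Near each $C_i$ the fixed barriers keep the $u_k$ bounded and equicontinuous up to the boundary, whence $u|_{C_i}=f_i$; near each $A_i$ the minimal barriers used from below with boundary value $k$ bound $u$ from below by a function tending to $+\infty$, so $u\to+\infty$ as $x\to A_i$. The main obstacle I anticipate is precisely the asymptotic flux analysis along $\Gamma$ and along the $A_i$: one must justify that the conormal derivatives saturate the bounds $\pm1$ in the limit and that the divergence lines are genuine geodesic segments between vertices. This is where the monotonicity and compactness machinery must be combined with the vanishing of the translator term under blow-up, and it is the only point at which the proof genuinely departs from the minimal Jenkins--Serrin case.
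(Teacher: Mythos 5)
Your first step (a monotone exhaustion by graphs with boundary data $n$ on the $A_i$ and $f_i$ on the $C_i$) coincides with the paper's, but from there the two arguments genuinely diverge. The paper never analyses divergence sets or fluxes. Instead it exploits the sign of the right-hand side: since $\Div(\nabla u_n/W_n)=1/W_n>0$, every translating graph is a subsolution of the \emph{minimal} graph equation, and the hypothesis $2\alpha(\mathcal P)<\ell(\mathcal P)$ is precisely Pinheiro's condition \cite{Pinheiro} for the existence of a minimal Jenkins--Serrin solution $v$ with the same data ($+\infty$ on the $A_i$, $f_i$ on the $C_i$). The maximum principle (Proposition \ref{Max. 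Principle}) then gives $u_n<v$ for all $n$, so the monotone sequence converges and the boundary behaviour follows from $u_n\le u\le v$. Moreover, the paper produces the finite-data solutions not by PDE barriers but by solving a Plateau problem for a Nitsche curve in Ilmanen's metric and proving the resulting disk is a graph (Theorem \ref{Existence}). Your route is the self-contained classical Jenkins--Serrin scheme, which is what one would need for a necessity statement or for the case $\{B_i\}\neq\varnothing$; the paper's route buys a proof of a few lines by delegating all the hard analysis to the known minimal case.

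The genuine gap in your proposal is that the divergence-set machinery it rests on is asserted rather than proved, and for the translator equation it cannot simply be quoted. Concretely: (i) that $|\nabla u_k|\to\infty$ locally uniformly on the divergence set $U$ requires an interior gradient estimate for translating graphs; (ii) that $\partial U\cap\Omega$ consists of geodesic arcs whose endpoints are vertices of $\Omega$, so that $\overline U$ is an admissible polygon, does not follow from Theorem \ref{mainth}(a) as you claim --- that theorem concerns a single complete graph over a fixed domain, not interior divergence lines of a sequence; one would have to rerun the compactness argument (area bounds as in Lemma \ref{Homology} together with Theorem \ref{Wickramasekera}) on the renormalized graphs and then control the endpoints of the divergence lines; (iii) the flux saturation $\int_\Gamma\langle\nabla u_k,\eta\rangle/W_k\,ds\to-\|\Gamma\|$ is a separate lemma (in the minimal case items (i)--(iii) occupy a substantial portion of \cite{Pinheiro} and \cite{Mazet-Rodriguez-Rosenberg}); and (iv) you must rule out that $U$ accumulates on some $C_i$, which needs upper barriers uniform in $k$ (here the subsolution observation above is exactly what helps). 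Finally, your justification of the solvability of the finite-data Dirichlet problems is off: Theorem \ref{mainth}(a) is a necessity statement and yields no existence; one needs either Serrin-type boundary gradient estimates adapted to the translator right-hand side or, as in the paper, the Plateau/Meeks--Yau argument. None of these gaps looks fatal --- the scheme is the standard one and should be completable --- but as written the proposal defers precisely the steps that would constitute the proof, all of which the paper's comparison argument avoids.
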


%Moreover, in the last section of this survey we going to explain the difficulty to extend the Jenkins-Serrin theory for translating soliton and given same results of existence of Jenkins-Serrin solution in this setting. 

%This paper is structured as follows. In Section \ref{back}, we briefly recall the results we need about maximum principle, convergence and compactness of varifolds. In Section \ref{TM} we deal with the cases of translating and minimal graphs and prove the main theorem in these cases. In Remark \ref{re:CMC}, we give an idea how to prove item (b) in Theorem 1.1. 
%Finally, in Section \ref{Jenkins-Serrin Translating Soliton} we deal with existence of Jenkins-Serrin solution of translating soliton equation.

{\bf Acknowledgements.} We would like to thank Brian White for his valuable comments, in particular concerning Theorem \ref{eucl-thm}. E. S. Gama is very grateful to the Institute of Mathematics at the University of Granada for its hospitality during the time the research and preparation of this article were conducted.

\section{Compactness theorem for stable integral varifolds}\label{back}

In this section, we give a brief review of the results that we need about varifolds. We refer the reader to \cite{SIMON} for an elegant introduction on the subject. Before we state the compactness theorem for varifolds, recall the following concepts of weak and strong convergence.  

\begin{Definition}\label{Weakly Convergence}
Let $\left\{S_{i}\right\}$ be a sequence of varifolds in a smooth manifold $\bar M$. We say that $\{S_{i}\}$ converges weakly to a varifold $S$, if for every smooth function $\varphi \colon \bar M\rightarrow\mathbb{R}$ with compact support we have
\begin{equation*}
\lim_{i\rightarrow \infty}\int_{\bar M}\varphi\, {\rm d} \mu_{S_{i}}=\int_{\bar M}\varphi\, {\rm d}\mu_{S},
\end{equation*}
where $\mu_{S_{i}}$ is the Radon measure in $\bar M$ associated to varifold $S_{i}$. If the sequence $\left\{S_{i}\right\}$  converges weakly to $S$, then we will write $S_{i}\rightharpoonup S$.
\end{Definition}

Let $S$ be a hypersurface in a Riemannian manifold $\bar M$. Given $p\in S$ and $r>0$ we denote by $B_p(r)$ the tangent ball in $T_p S\subset T_p \bar M$ centered at $0\in T_p S$ with radius $r$.
\begin{equation*}
B_{r}(p)=\left\{v\in T_{p}S\colon |v|<r\right\}
\end{equation*}
%the tangent ball around $p$ of radius $r$. Consider now $T_{p}M$ as a vector subspace of $T_{p}N$ and 
Let $N$ be a unit normal vector to $S$ at $p$. 
%to $T_{p}S$ in $T_{p}M$. 
Fixed a sufficiently small $\varepsilon>0$, let $W_{r,\varepsilon}(p)$  be the solid cylinder around $p$, that is,
\begin{equation*}
W_{r,\varepsilon}(p) \coloneqq \left\{\exp_p(q+tN) \colon q\in B_{r}(p)\ \operatorname{and}\ |t|<\varepsilon\right\},
\end{equation*}
where  $\exp$ is the exponential map of the manifold $\bar M$.  Given a smooth function $f:B_{r}(p)\rightarrow (-\varepsilon, \varepsilon)$, the set
\begin{equation*}
{\rm Graph}[f] \coloneqq\left\{\exp_p(q+u(q)N) \colon q\in B_{r}(p)\right\},
\end{equation*}
is called the graph of $f$ over $ B_{r}(p).$ Finally, the convergence in $C^{\infty}$-topology can be defined as follows.
\begin{Definition}\label{Smooth Convergence}
Let $\left\{S_{i}\right\}$ be a sequence of smooth codimension one submanifolds of a smooth manifold $\bar M$. We say that $\{S_{i}\}$ converges in $C^{\infty}$-topology with finite multiplicity to a smooth embedded submanifold $S$ if 
\begin{itemize}
\item[a.] $S$ consists of accumulation points of $\{S_{i}\}$, that is, for each $p\in S$ there exists a sequence $\{p_{i}\}$ such that $p_{i}\in S_{i}$, for each $i\in \mathbb{N}$, and $p=\displaystyle\lim_{i}p_{i}$;

\item[b.] For every $p\in S$ there exist $r,\varepsilon>0$ such that $S\cap W_{r,\varepsilon}(p)$ can be represented as a graph of a function $f$ over $B_{r}(p)$;

\item[c.] For $i$ large enough, the set $S_{i}\cap W_{r,\varepsilon}(p)$ consists of a finite number, say $k$, independent of $i$, of graphs of functions $f_{i}^1,\ldots,f^{k}_{i}$ over $B_{r}(p)$ that converge smoothly to $f.$
\end{itemize}
The  multiplicity of a given point $p\in S$ is defined by $k$. If $\left\{S_{i}\right\}$ converges smoothly to $S$, then we will write $S_{i}\rightarrow S$.
\end{Definition}

In this setting, the strong compactness theorem for integral varifolds can be stated as follows. 
\begin{Theorem}[Strong compactness theorem]\label{Wickramasekera} 
Let $\left\{S_{i}\right\}$ be a sequence of stable minimal hypersurfaces in Riemannian manifold $\bar M^{n+1}$, without boundary, such that 
\begin{equation*}
\limsup\mu_{S_i}(\bar M)<\infty,
\end{equation*}
where $\mu_{S_i}$ is the (Riemannian) measure associated to $S_i$. Suppose that there exists a sequence $\{p_i\}$ so that $p_i\in S_i$ and $p_i\to p_{\infty}\in \bar M.$ Then there exist a closed set, $\operatorname{sing}S_{\infty}$, and a stationary integral varifold, $S_{\infty}$, such that a subsequence of $\left\{S_{i}\right\}$ converges smoothly to %$\operatorname{spt}
$S_{\infty}$ away from $\operatorname{sing}S_{\infty}$ and $S_i\rightharpoonup S_{\infty}$ with $p_\infty\in \operatorname{spt} S_\infty$. Moreover, the set $\operatorname{sing}S_{\infty}$ has Hausdorff dimension at most $n-7$. Hence, it is empty for $n<7$.
\end{Theorem}
\begin{Remark}
This version is a consequence of regularity and compactness results due to R. Schoen and L. Simon in \cite{Schoen-Simon} and Corollary 17.8 in \cite{SIMON}. See also  \cite{Wickramasekera}.
\end{Remark}

%Let $\bar{M}^{n+1}$ be an oriented Riemannian manifold without boundary. Consider the functional $J$ for  integral varifolds in $\bar{M}$ defined by
%\begin{equation}\label{func:J}
%J(S)=\int_{S}{\rm d}\mu_{S}+\frac{H_0}{n+1}\int_{\operatorname{reg}_1 S} i^{*}....\,{\rm d}\mu_{\bar{M}}, 
%\end{equation}
%for any $n-$dimensional integral varifold $S$ in $\bar{M}$, where $\,{\rm d}\mu_{\bar{M}}$ is the volume element of $\bar{M}$, $i$ denotes the inclusion of $\operatorname{reg}_1 S$ in $\bar{M}$ and $N$ is a continuous choice of unit normal on $\operatorname{reg}_1 S$, the set of all $x\in\operatorname{spt}S$  such that there exists $\varepsilon>0$ so that $\operatorname{spt}S\cap B_{\varepsilon}(x)$ is an embedded hypersurface of $B_{\varepsilon}(x)$ of class $C^1,$ where $B_{\varepsilon}(x)$ denote an open ball of radius $\varepsilon$ and center $x$ in $\bar{M}.$

In very recent works about regularity and compactness of stable CMC integral varifolds C. Bellettini and N. Wickramasekera, in \cite{Bellettini-Wickramasekera}, \cite{Bellettini-Wickramasekera1} and \cite{Bellettini-Wickramasekera2}, have extended the previous theorem to the class of all stable CMC integral varifolds in the following way: 

\begin{Theorem}[General compactness theorem]\label{Bellettini-Wickramasekera}
Let $\left\{S_{i}\right\}$ be a sequence of stable CMC hypersurfaces in Riemannian manifold $\bar{M}^{n+1}$, without boundary, satisfying  
\begin{equation*}
\limsup\{\mu_{S_i}(\bar{M})+|H_i|\}<\infty,
\end{equation*}
where $\mu_{S_i}$ is the (Riemannian) measure associated to $S_i$ and $H_i$ is the mean curvature of $S_i$. Suppose that there exists a sequence $\{p_i\}$ so that $p_i\in S_i$ and $p_i\to p_{\infty}\in \bar{M}.$ Then there exist a real number $H_{\infty}$, a closed set $\operatorname{sing}S_{\infty}$, an integral varifold $S_{\infty}$ and subsequence  $\left\{S_{i_j}\right\}$ and  subsequence of $\left\{H_{i_j}\right\}$ such that $S_{i}\rightharpoonup S_{\infty}$, $H_i\to H_{\infty}$ and $p_\infty\in \operatorname{spt} S_\infty$. Moreover $\operatorname{sing} S_{\infty}\setminus\operatorname{sing}_T S_{\infty}$ has Hausdorff dimension at most $n-7$, and $\operatorname{sing}_T S_{\infty}\subset \operatorname{gen-reg} S_{\infty}$, $\operatorname{sing}S_{\infty}$ is locally contained in a smooth submanifold of dimension $n-1$,
and $\operatorname{gen-reg} S_{\infty}$ is a classical CMC immersion with mean curvature $H_{\infty}$.
\end{Theorem}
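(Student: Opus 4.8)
The plan is to split the statement into a \emph{soft} precompactness step, which produces the weak limit $S_\infty$ together with the limiting constant $H_\infty$, and a \emph{hard} regularity step, which describes the structure of $S_\infty$ and upgrades weak to smooth convergence off a small singular set. First I would extract the limit. Since each $S_i$ is a smooth hypersurface with mean curvature $H_i$, its first variation is $\delta S_i(X)=-\int_{S_i} H_i\,\langle X,\nu_i\rangle\,{\rm d}\mu_{S_i}$, where $\nu_i$ is the unit normal; thus the hypothesis $\limsup\{\mu_{S_i}(\bar M)+|H_i|\}<\infty$ says the masses are locally uniformly bounded and the first variations are locally uniformly bounded by $|H_i|$. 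Allard's compactness theorem for integral varifolds then yields a subsequence with $S_{i_j}\rightharpoonup S_\infty$ for an integral varifold $S_\infty$, while the bounded real sequence $\{H_{i_j}\}$ has a further subsequence converging to some $H_\infty\in\R$. Passing to the limit in the first variation formula shows that $S_\infty$ has bounded generalized mean curvature equal to $H_\infty$ in the varifold sense, so $S_\infty$ is an integral varifold of constant generalized mean curvature $H_\infty$.

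Next I would verify that $p_\infty\in\operatorname{spt} S_\infty$ and that stability is inherited. For the support claim, the monotonicity formula for varifolds with bounded generalized mean curvature gives an almost monotone density ratio; since each $p_i\in S_i$ is a smooth point of unit density, one obtains a uniform positive lower bound for the mass ratios $\mu_{S_{i_j}}\big(B_r(p_{i_j})\big)/\omega_n r^n$ on a fixed scale, and this lower bound survives the weak limit, forcing the density of $S_\infty$ at $p_\infty$ to be positive; hence $p_\infty\in\operatorname{spt} S_\infty$. For stability, nonnegativity of the second variation of each $S_i$ (with respect to the relevant volume-preserving variations) is a closed condition under the above convergence, so $S_\infty$ is a \emph{stable} CMC integral varifold in the sense required by the regularity theory.

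The substantive part is the regularity and structure of $S_\infty$, and here I would invoke the Schoen--Simon-type curvature estimates together with the sheeting and $\varepsilon$-regularity theorem for stable CMC integral varifolds. These estimates bound the second fundamental form of the regular part of $S_\infty$, and uniformly of the approximating $S_{i_j}$, on every compact subset away from a closed singular set $\operatorname{sing} S_\infty$; by Arzel\`a--Ascoli and standard elliptic theory the convergence $S_{i_j}\to S_\infty$ is then smooth, with finite multiplicity, off $\operatorname{sing} S_\infty$. A blow-up analysis classifying the tangent cones at singular points then splits the singular set: away from the \emph{touching} singularities $\operatorname{sing}_T S_\infty$, where two smooth CMC sheets meet tangentially with opposing mean-curvature vectors, the dimension-reducing argument \`a la Federer gives $\dim_{\mathcal H}\big(\operatorname{sing} S_\infty\setminus\operatorname{sing}_T S_\infty\big)\le n-7$; the touching singularities themselves are shown to lie in the classical regular part $\operatorname{gen-reg} S_\infty$, the whole of $\operatorname{sing} S_\infty$ is locally contained in a smooth $(n-1)$-dimensional submanifold, and on its complement $\operatorname{gen-reg} S_\infty$ is a classical CMC immersion with mean curvature $H_\infty$.

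The main obstacle is precisely this CMC-specific regularity: unlike the minimal case underlying Theorem \ref{Wickramasekera}, the strong maximum principle no longer forbids distinct sheets from touching, so one must control the tangential touching set and prove that such contact points are still classically regular for the immersion $\operatorname{gen-reg} S_\infty$. Establishing the curvature estimates, the sheeting theorem, and the tangent-cone classification that underlie this analysis is exactly the deep content of Bellettini and Wickramasekera's work; the remaining precompactness, support, and stability arguments are standard geometric-measure-theoretic adaptations of the minimal-surface case.
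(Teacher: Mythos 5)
You should know at the outset that the paper contains no proof of this statement: it is stated as a quoted result, and the remark immediately following it says explicitly that it is a \emph{weak version} of the compactness theorems of Bellettini and Wickramasekera \cite{Bellettini-Wickramasekera, Bellettini-Wickramasekera1, Bellettini-Wickramasekera2}. So there is no internal argument to compare yours against, and your overall division of labor is exactly right and matches the paper's stance: the soft precompactness (Allard compactness under the mass and mean curvature bounds, a further subsequence for $\{H_{i_j}\}$, the support point $p_\infty$ via almost-monotonicity of density ratios) is routine, while the sheeting theorem, the curvature estimates, the tangent-cone classification, the bound $\dim_{\mathcal H}(\operatorname{sing} S_\infty\setminus\operatorname{sing}_T S_\infty)\le n-7$, and the statement that $\operatorname{gen-reg} S_\infty$ is a classical CMC immersion are precisely the deep content of the cited work, which a sketch cannot and should not try to reprove.

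Two assertions in your soft step are, however, not correct as stated, and both touch the very point that makes the CMC theory delicate. First, passing to the limit in the first variation formula does \emph{not} show that $S_\infty$ has generalized mean curvature equal to $H_\infty$ in the varifold sense: lower semicontinuity only gives $\|\delta S_\infty\|\le H_\infty\,\mu_{S_\infty}$, because the vector measures $H_{i_j}\nu_{i_j}\,\mu_{S_{i_j}}$ can cancel in the limit when oppositely oriented sheets collapse onto one another --- which is exactly the touching phenomenon you yourself describe when discussing $\operatorname{sing}_T S_\infty$ (``two smooth CMC sheets \ldots with opposing mean-curvature vectors''). Your later description is in direct tension with the earlier claim; the theorem is carefully worded to assert CMC only for the immersion $\operatorname{gen-reg} S_\infty$, not constant generalized mean curvature for the varifold. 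Second, the claim that stability ``is a closed condition under the above convergence'' is too glib: nonnegativity of the second variation does not pass through weak varifold convergence by itself; it passes through smooth graphical convergence with multiplicity, which is an \emph{output} of the regularity theory, not an available input. In the Bellettini--Wickramasekera formulation this step is in any case unnecessary, since their theorem takes stability of the approximating sequence as a hypothesis and delivers the structure of the limit directly, so no separate ``stability of $S_\infty$'' verification is needed to invoke it --- just as the paper does.
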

\begin{Remark}
Both Theorem \ref{Wickramasekera} and Theorem \ref{Bellettini-Wickramasekera}, as stated above, are weak versions of the compactness theorems by Bellettini and Wickramasekera.
\end{Remark}

In the statement of Theorem \ref{Bellettini-Wickramasekera} the sets $\operatorname{sing}_C S_{\infty}$, $\operatorname{reg}_1 S$, $\operatorname{sing}_T S_{\infty}$ and $\operatorname{gen-reg} S_{\infty}$  are defined as follows. Here $B_{\varepsilon}(x)$ denotes an open ball of radius $\varepsilon$ and center $x$ in $\bar{M}.
$ 
\begin{Definition}[Set of classical singularities]
$\operatorname{sing}_C S_{\infty}$ denotes the set of all $x\in\operatorname{spt}S$  such that, for some $\alpha\in(0,1]$, there exists $\varepsilon>0$ so that $\operatorname{spt}S\cap B_{\varepsilon}(x)$ is  the union of three or more embedded $C^{1,\alpha}$ hypersurface with boundary meeting pairwise only along their common $C^{1,\alpha}$ boundary $\gamma$ containing $x$ and such that at least one pair of the hypersurfaces with boundary meet transversely everywhere along $\gamma.$ 
\end{Definition}
\begin{Definition}[$C^1$ regular set]
$\operatorname{reg}_1 S$ denotes the set of all $x\in\operatorname{spt}S$  such that there exists $\varepsilon>0$ so that $\operatorname{spt}S\cap B_{\varepsilon}(x)$ is an embedded hypersurface of $B_{\varepsilon}(x)$ of class $C^1.$ 
\end{Definition}
\begin{Definition}[Set of touching singularities]
$\operatorname{sing}_T S_{\infty}$ denotes the set of all $ x\in\operatorname{sing} S_{\infty}\setminus \operatorname{reg}_1S_{\infty}$ so that $x\notin \operatorname{sing}_C S_{\infty}$ and there exists $\varepsilon>0$ such that $\operatorname{spt}S_\infty\cap B_{\varepsilon}(x)$ is the union of two embedded $C^{1,\alpha}$ hypersurfaces of $B_{\varepsilon}(x)$ which touch at $x$.
\end{Definition}
\begin{Definition}[Generalized regular set]
 $\operatorname{gen-reg} S_{\infty}$ denotes the set of all points $x\in\operatorname{spt}S_{\infty}$ so that either $x\in\operatorname{reg} S_{\infty}$ or $x\in\operatorname{sing}_T S_{\infty}$ and there exists $\varepsilon>0$ such that $\operatorname{spt}S_\infty\cap B_{\varepsilon}(x)$ is the union of two embedded $C^{\infty}$ hypersurfaces of $B_{\varepsilon}(x).$ 
\end{Definition}

\section{Translating graphs}\label{TM}

In this section we present some preliminaries on translating and minimal graphs. We will first study the behaviour of the translating graphs in the product space $M\times\R$ and in the end we prove our main theorem for  translating and minimal graphs. Here $M$ is a complete Riemannian manifold  whose metric is denoted by $\sigma$. 
Following \cite{Lira} we say that a hypersurface $\Sigma$ in $M\times\mathbb{R}$ is a translating soliton with respect to the parallel vector field $X=\partial_t$  (with translation speed $c\in\mathbb{R}$) if 
\begin{equation*}
\textbf{H}=c\,X^{\perp},
\end{equation*}
where $\textbf{H}$ is the mean curvature vector field of $\Sigma$ and $\perp$ indicates the projection onto the normal bundle of $\Sigma$. In particular, if $N$ is a normal vector field along $\Sigma$, then we have
\begin{equation}\label{TS}
H=c\langle X, N\rangle,
\end{equation}
where $\langle \cdot, \cdot\rangle$ denotes the Riemannian product metric in $M\times\mathbb{R}$.
 
Although translating solitons seem to define a completely new class of hypersurfaces, the next result due to T. Ilmanen \cite{Ilmanen} tells that they are actually minimal hypersurfaces with respect to the so-called Ilmanen's metric
\begin{equation}
g_c = e^{\frac{2c}{m}t} (\sigma+{\rm d}t^2)
\end{equation}
which is a metric conformal to the Riemannian product metric $g_0 = \sigma+{\rm d}t^2$ in $M\times \mathbb{R}$, see also \cite{Lira}. 

Hence (\ref{TS}) can be interpreted as the fact that the mean curvature of $\Sigma$ with respect to $g_c$ vanishes, i.e.
\begin{equation}
\label{HIH}
e^{\frac{c}{m}t}\widetilde H = H- c\langle X, N\rangle =0.
\end{equation}
This is the Euler-Lagrange equation (for normal variations) of the volume functional 
\[
\mathcal{A}_c[\Sigma]= \int_\Sigma e^{c\eta} {\rm d}\mu_{\Sigma}
\]
where $\eta = t|_\Sigma$ and $e^{c\eta}\,{\rm d}\mu_{\Sigma}$ is the volume element in $\Sigma$ induced by $g_c.$ 
\begin{Remark}
In geometric measure theory, $\mathcal{A}_c[\Sigma]$ is usually called the mass of the varifold associate to $\Sigma.$ Indeed,  the standard notation to $\mathcal{A}_c[\Sigma]$ is $M_c(\Sigma),$ where we put the index $c$ to indicate the dependence of the metric $g_c.$ However, we will use the notation $\mathcal{A}_c[\Sigma]$ to denote the integral above.
\end{Remark}

\begin{Lemma}[T. Ilmanen]
Translating solitons with translation speed $c\in\mathbb{R}$ are minimal hypersurfaces in the product $M\times\R$ with  respect to the Ilmanen's metric $g_c=e^{\frac{2c}{m}t}
(\sigma+{\rm d}t^2)$.
\end{Lemma}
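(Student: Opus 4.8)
The plan is to verify that the translating soliton equation \eqref{TS} is exactly the vanishing of the mean curvature computed with respect to the conformal metric $g_c = e^{\frac{2c}{m}t}g_0$, so the statement reduces to the standard transformation law for mean curvature under a conformal change of the ambient metric. First I would recall that if $\bar g = e^{2\phi}g$ are two conformally related metrics on an $(m+1)$-dimensional ambient manifold and $\Sigma$ is an $m$-dimensional hypersurface with unit normal $N$ (with respect to $g$), then the scalar mean curvatures are related by
\begin{equation*}
e^{\phi}\,\bar H = H - m\,\langle \nabla\phi, N\rangle,
\end{equation*}
where $\nabla\phi$ is the $g$-gradient of $\phi$ and $H$, $\bar H$ are the traces of the respective second fundamental forms with respect to the corresponding unit normals. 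This is precisely the content of formula \eqref{HIH}, so the core of the argument is to justify this identity and then specialise it.

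The key computational step is to apply this with $\phi = \frac{c}{m}t$, so that $g_0 = \sigma + {\rm d}t^2$ plays the role of $g$ and $g_c = e^{2\phi}g_0$ plays the role of $\bar g$. Then $\nabla\phi = \frac{c}{m}\nabla t = \frac{c}{m}X$, since $X = \partial_t$ is the $g_0$-gradient of the height function $t$ and is parallel and of unit length in the product metric. Substituting into the conformal transformation law gives
\begin{equation*}
e^{\frac{c}{m}t}\,\widetilde H = H - m\Big\langle \tfrac{c}{m}X, N\Big\rangle = H - c\,\langle X, N\rangle,
\end{equation*}
which is exactly \eqref{HIH}. Therefore $\widetilde H = 0$ (the vanishing of the $g_c$-mean curvature, i.e.\ minimality in the Ilmanen metric) holds if and only if $H = c\langle X, N\rangle$, which is the defining equation \eqref{TS} of a translating soliton with speed $c$. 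This establishes the equivalence in both directions and proves the lemma.

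The main obstacle, and the only genuine content beyond bookkeeping, is establishing the conformal transformation law for the mean curvature in the correct dimension and with the correct sign convention. One would compute the second fundamental form of $\Sigma$ with respect to $\bar g$ in terms of that with respect to $g$: the unit normal rescales as $\bar N = e^{-\phi}N$, the induced metric rescales as $\bar g|_\Sigma = e^{2\phi}g|_\Sigma$, and the Levi-Civita connections are related by the standard formula $\bar\nabla_YZ = \nabla_YZ + (Y\phi)Z + (Z\phi)Y - \langle Y,Z\rangle\nabla\phi$. Tracing the resulting expression for the shield operator over an orthonormal frame, the tangential derivatives of $\phi$ cancel and one is left with the normal contribution $-m\langle\nabla\phi,N\rangle$, after accounting for the $e^{-\phi}$ rescaling that converts $\bar g$-trace into $g$-trace. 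I would be careful that the paper uses the convention that mean curvature is the full trace of the second fundamental form (not the average), which is why the factor is $m = \dim\Sigma$ rather than $1$; this is consistent with the appearance of $\frac{c}{m}$ in the definition of $g_c$, so the two factors of $m$ cancel precisely to yield the clean relation $H - c\langle X,N\rangle$.
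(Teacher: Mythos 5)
Your proof is correct and takes essentially the same route as the paper: the authors likewise obtain $e^{\frac{c}{m}t}\widetilde H = H - c\langle X, N\rangle$ by writing down the conformally changed Levi-Civita connection for $\phi=\frac{c}{m}t$, rescaling the unit normal to $\widetilde N = e^{-\frac{c}{m}t}N$, computing $\widetilde{II} = e^{\frac{c}{m}t}\bigl(II - \tfrac{c}{m}\langle \partial_t, N\rangle\, g_0|_\Sigma\bigr)$, and tracing with respect to $g_c|_\Sigma$, so that minimality in the Ilmanen metric is equivalent to the soliton equation $H = c\langle X, N\rangle$. The only blemish is the typo ``shield operator'' for ``shape operator''; the factor bookkeeping (mean curvature as full trace, hence the factor $m$ cancelling the $\tfrac{c}{m}$ in the conformal weight) matches the paper exactly.
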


With this interpretation, it is natural to consider the second variation of the volume and the corresponding Jacobi operator
\begin{equation*}
L_c[v]=\Delta v+c\langle X, \nabla v\rangle+(|A|^2+\overline{\operatorname{Ric}}(N,N))v, \quad v\in C^2(\Sigma),
\end{equation*}
where $|A|$ is the norm of the second fundamental form and $\overline{\operatorname{Ric}}$ is the Ricci curvature of $M\times\R$, both calculated with respect to the Riemannian product metric. We refer to \cite{ALR}, \cite{Lira} and \cite{Zhou} for further details. Taking derivatives on both sides of (\ref{TS}) we obtain
\begin{equation}
\label{nablaHkk}
\nabla H=-c\,AT,
\end{equation}
where $T=X^\top$ is the tangential component of $X$. Since $X$ is parallel in $M\times\R$ with respect to the product metric $g_0$, we also have
\begin{equation}
%\label{luis21}
\nabla_VT=\frac{H}{c}AV
\end{equation}
for any $V\in \Gamma(T\Sigma)$. Hence, using Codazzi equation for any $U,V\in\Gamma(T\Sigma)$ we have 
\begin{eqnarray*}
\langle \nabla_U \nabla H, V\rangle & = &  -c\langle (\nabla_U A)T,
V\rangle  -
H\langle AU, AV\rangle\\
\nonumber {} & = & -c\langle (\nabla_T A)U,
V\rangle +c\langle\bar{R}(U,T)N,V\rangle
 -H\langle AU, AV\rangle.
\end{eqnarray*}
Therefore
\begin{equation*}
\Delta H=-c\langle\nabla H,T\rangle+c\,{\rm Ric}_{\bar M}(T,N)-|A|^2H.
\end{equation*}
Since
\[
0 = \overline{\operatorname{Ric}} (X, N) = \overline{\operatorname{Ric}} (T, N) + \langle X, N\rangle \overline{\operatorname{Ric}} (N,N)=\overline{\operatorname{Ric}} (T, N) +\frac{1}{c} H \overline{\operatorname{Ric}}(N,N)
\]
we conclude that 
\begin{equation}
L_c[H] = \Delta H + c\langle X, \nabla H\rangle +(|A|^2+{\rm Ric}_{\bar M}(N,N))H =0.
\end{equation}
Therefore, the function $h:=\langle X,N\rangle$ satisfies  $L_c [h]=0$. 

\vspace{3mm}

Outside the points where a translating soliton is vertical, it can be described locally in non-parametric terms as a graph
\[
\Sigma = \{(x, u(x)): x\in \Omega\}
\]
of a smooth function  $u$ defined in a domain $\Omega\subset M$ with regular boundary (possibly empty.)  In this case, we denote $\Sigma = {\rm Graph}[u]$ and we refer to those solitons as \emph{translating graphs}. From \eqref{TS} we can check that $u$ satisfies the following partial differential equation
\begin{equation}\label{soliton}
\Div\left(\frac{\nabla u}{W}\right)=\frac{1}{W},
\end{equation}
where $W \coloneqq \sqrt[]{1+|\nabla u|^2 }$, and the gradient and divergence operators are taken with respect to the Riemannian metric $\sigma$ of $M$. In this case, $\Sigma$ can be oriented by the normal vector field
\[
N=\frac{1}{W} (X - \nabla u)
\]
with $\nabla u$ translated from $x\in \Omega$ to the point $(x, u(x))\in \Sigma$. Proceeding as in \cite{Zhou} we prove the following result. 
\begin{Lemma}[Shahriyari-Zhou]
All translating graphs are stable in $M\times\R$ endowed with Ilmanen's metric $g_c$.
\end{Lemma}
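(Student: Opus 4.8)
The plan is to manufacture an everywhere positive solution of the Jacobi equation $L_c[\,\cdot\,]=0$ on $\Sigma$ and then invoke the classical criterion of Fischer-Colbrie and Schoen: for a minimal hypersurface, the existence of a positive Jacobi field forces the second variation form to be nonnegative. Since a translating graph is minimal for the Ilmanen metric $g_c$, stability means precisely that the weighted quadratic form
\[
Q_c(v,v)=\int_\Sigma\bigl(|\nabla v|^2-(|A|^2+\overline{\operatorname{Ric}}(N,N))v^2\bigr)\,e^{c\eta}\,{\rm d}\mu_{\Sigma}
\]
is nonnegative for every $v\in C^\infty_c(\Sigma)$, where $\eta=t|_\Sigma$. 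Writing $\Delta_c=\Delta+c\langle\nabla\eta,\nabla\cdot\rangle$ and noting that $c\langle X,\nabla v\rangle=c\langle\nabla\eta,\nabla v\rangle$, the operator $L_c[v]=\Delta_c v+(|A|^2+\overline{\operatorname{Ric}}(N,N))v$ is exactly the drift-Laplacian stability operator, which is self-adjoint with respect to the weighted measure $e^{c\eta}\,{\rm d}\mu_{\Sigma}$, so that $Q_c(v,v)=-\int_\Sigma v\,L_c[v]\,e^{c\eta}\,{\rm d}\mu_{\Sigma}$ after integrating by parts.

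First I would produce the positive Jacobi field. We have already established that $h:=\langle X,N\rangle$ satisfies $L_c[h]=0$. For a graph the unit normal is $N=W^{-1}(X-\nabla u)$ with $\nabla u$ horizontal, so $\langle X,\nabla u\rangle=0$, and hence $h=\langle X,N\rangle=W^{-1}\langle X,X\rangle=W^{-1}>0$ everywhere on $\Sigma$. Thus $h$ is a strictly positive solution of $L_c[h]=0$.

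The core step is then the logarithmic substitution. Given $v\in C^\infty_c(\Sigma)$ I would write $v=h\phi$ with $\phi=v/h$, which is legitimate since $h>0$. Expanding $|\nabla v|^2$, replacing $(|A|^2+\overline{\operatorname{Ric}}(N,N))h$ by $-\Delta_c h$ via $L_c[h]=0$, and integrating the resulting term $\int_\Sigma h\phi^2\,\Delta_c h\,e^{c\eta}\,{\rm d}\mu_{\Sigma}$ by parts against the weighted measure, every term carrying $\nabla h$ cancels and one is left with
\[
Q_c(v,v)=\int_\Sigma h^2|\nabla\phi|^2\,e^{c\eta}\,{\rm d}\mu_{\Sigma}\ge 0.
\]
As $v\in C^\infty_c(\Sigma)$ was arbitrary, this is exactly the stability of $\Sigma$ in $(M\times\R,g_c)$.

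I do not expect a genuine obstacle here, since the computation is routine once the weighted self-adjointness is set up correctly. The only point demanding care is the bookkeeping of the weight: the integration by parts must be carried out with respect to $e^{c\eta}\,{\rm d}\mu_{\Sigma}$, and the drift term of $L_c$ must be recognized as the part of the weighted divergence that makes the $\nabla h$ cross-terms cancel. The geometric heart of the argument is simply that a graph carries a globally defined normal for which $\langle X,N\rangle$ never vanishes, and this is what converts the parallel translation field $X$ into a positive Jacobi field.
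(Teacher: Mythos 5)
Your proposal is correct, and it is essentially the proof the paper has in mind: the paper derives $L_c[H]=0$, notes that $h=\langle X,N\rangle$ is therefore a Jacobi field for $L_c$, and then states the lemma with a pointer to Zhou's argument, which is exactly your route --- for a graph $h=1/W>0$, and a positive solution of the Jacobi equation forces $Q_c\geq 0$ via the substitution $v=h\phi$ and weighted integration by parts. Your bookkeeping (the drift term $c\langle X,\nabla v\rangle=c\langle\nabla\eta,\nabla v\rangle$ making $L_c$ self-adjoint for $e^{c\eta}\,{\rm d}\mu_\Sigma$, and the cancellation of the $\nabla h$ cross-terms) is accurate.

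The one substantive difference is that the paper does not stop at stability: immediately after the lemma it proves the stronger statement (Lemma \ref{Homology}) that translating graphs are $\mathcal{A}_c$-\emph{minimizing}, by a calibration argument --- the vertical translates of $\Sigma$ foliate $\bar\Omega\times\R$, the field $Y=e^{ct}N_\tau$ is divergence-free by the soliton equation, and the divergence theorem gives the comparison with any competitor having the same boundary. That route buys more than yours: minimization (not just stability) is what supplies the local area bounds $\mathcal{A}_c[S_i]\leq\mathcal{A}_c[\partial B]$ used in the compactness argument of Theorem \ref{main}. Your Jacobi-field argument, on the other hand, is purely local and self-contained, and suffices for the lemma as stated.
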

\begin{Remark}
If $\Sigma$ is a {\em graphical} translator  and $N$ is  the (upward-pointing) unit-normal vector field to $\Sigma$, 
then $\langle \partial_t , N \rangle$ is a positive  Jacobi field.  Consequently, $\Sigma$ is a stable $g_c$-minimal surface. Therefore a sequence  of translating graphs will converge, subsequentially, to a  translator. 
Moreover the vertical translates of $\Sigma$ are also  $g_c$-minimal and foliate a cylinder $\Omega\times\R$, where
$\Omega$ is the region over which $\Sigma$ is a graph. As a consequence, $\Sigma$ is a $g_c$-area minimizing
 surface in $\Omega\times\R$.
\end{Remark}

The proof of the last assertion in the previous remark is quite simple, as we will see in the next lemma. Recall that a complete hypersurface is 
called area-minimizing if any 
compact piece is area-minimizing among all the hypersurfaces with the same boundary.
\begin{Lemma}\label{Homology}
Let $\Omega\subset M$ be a bounded domain and $\Sigma$ a translating graph over $\bar{\Omega}$. For any hypersurface $\Sigma'$ in $\bar{\Omega}\times\mathbb{R}$ with $\partial\Sigma=\partial \Sigma',$ we have
\begin{equation*}
\mathcal{A}_c[\Sigma] =\int_{\Sigma}e^{ct} \, {\rm d}\mu_\Sigma\leq \int_{\Sigma'} e^{ct} \,{\rm d}\mu_{\Sigma'}= \mathcal{A}_c[\Sigma'],
\end{equation*}
and the equality holds  if, and only if, $\Sigma=\Sigma'$. 
\end{Lemma}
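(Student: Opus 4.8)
The plan is to prove the lemma by exhibiting a \emph{calibration} for the weighted functional $\mathcal{A}_c$ and then running the standard comparison argument; since $e^{ct}\,{\rm d}\mu_\Sigma$ is precisely the volume element of the Ilmanen metric $g_c$, the assertion is that $\Sigma$ minimizes $g_c$-area in $\bar\Omega\times\R$ among hypersurfaces with the same boundary. First I would introduce the defining function $F(x,t)=t-u(x)$ on $\bar\Omega\times\R$, whose zero set is exactly $\Sigma$ and whose level sets $\{F=a\}$ are the vertical translates of $\Sigma$, foliating $\bar\Omega\times\R$. Writing $\nabla$ and $|\cdot|$ for the product metric $g_0$, I set
\[
\xi \coloneqq \frac{\nabla F}{|\nabla F|}=\frac{1}{W}\,(X-\nabla u),
\]
so that $|\xi|_{g_0}=1$, the field $\xi$ is independent of $t$, and $\xi|_\Sigma=N$.

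The key step is the pointwise identity $\Div_{g_0}(e^{ct}\xi)=0$ on $\Omega\times\R$. To prove it I would expand $\Div_{g_0}(e^{ct}\xi)=e^{ct}\Div_{g_0}\xi+\langle\nabla e^{ct},\xi\rangle$. Because $X=\partial_t$ is parallel and $W$, $\nabla u$ depend only on $x$, the vertical part $\tfrac{1}{W}X$ is divergence free, while the horizontal part contributes $-\Div(\nabla u/W)$; combining the graph equation with \eqref{TS} this equals $-c/W$ (for unit speed this is \eqref{soliton}). On the other hand $\nabla e^{ct}=c\,e^{ct}X$ and $\langle X,\xi\rangle=1/W$, so $\langle\nabla e^{ct},\xi\rangle=c\,e^{ct}/W$, and the two terms cancel. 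Hence $Y\coloneqq e^{ct}\xi$ is a $g_0$-divergence-free field of $g_0$-norm $e^{ct}$ whose direction restricts to the unit normal of $\Sigma$.

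With the calibration in hand I would apply the divergence theorem. As $\Sigma$ and $\Sigma'$ share the same boundary, $\Sigma-\Sigma'$ is a cycle in $\bar\Omega\times\R$ and bounds a region $U$ (the homological input naming the lemma). Applying the divergence theorem to $Y$ over $U$ gives $\int_{\partial U}\langle Y,\nu\rangle\,{\rm d}\mu=0$, where $\nu$ is the outward unit normal. With the orientation for which $\nu=-N$ along $\Sigma$, the $\Sigma$-contribution is $-\int_\Sigma e^{ct}\,{\rm d}\mu_\Sigma=-\mathcal{A}_c[\Sigma]$, while along $\Sigma'$ the Cauchy--Schwarz inequality $\langle\xi,\nu\rangle\le|\xi|_{g_0}=1$ bounds the $\Sigma'$-contribution by $\int_{\Sigma'}e^{ct}\,{\rm d}\mu_{\Sigma'}=\mathcal{A}_c[\Sigma']$. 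Therefore $0\le-\mathcal{A}_c[\Sigma]+\mathcal{A}_c[\Sigma']$, which is the stated inequality.

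For the equality case, equality in the Cauchy--Schwarz step forces $\nu=\xi$ $\mu_{\Sigma'}$-a.e.; since $\xi=\nabla F/|\nabla F|$, this means $\Sigma'$ is tangent to the level sets of $F$, so $F$ is locally constant on $\Sigma'$, and as $\Sigma'$ meets $\Sigma=\{F=0\}$ along the common boundary we get $F\equiv 0$ on $\Sigma'$, i.e. $\Sigma'\subset\Sigma$ and hence $\Sigma'=\Sigma$. The main obstacle I anticipate is justifying the divergence theorem for an arbitrary competitor $\Sigma'$: one must produce the enclosed region $U$ and cope with the limited regularity of $\Sigma'$, which is cleanest in the language of integral currents / sets of finite perimeter so that $\langle Y,\nu\rangle$ is read off $\mu_{\Sigma'}$-a.e. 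The secondary point needing care is bookkeeping the constant, namely that the soliton equation gives exactly $\Div(\nabla u/W)=c/W$, matching the weight $e^{ct}$ so that the divergence identity is genuinely zero.
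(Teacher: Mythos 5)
Your proposal is correct and takes essentially the same route as the paper: the paper's proof also calibrates $\mathcal{A}_c$ with the vector field $Y=e^{ct}N_\tau$ (your $e^{ct}\xi$, the weighted unit normal extended by vertical translation invariance), verifies $\operatorname{div}_{g_0}Y=0$ using the soliton identity $H=c\langle X,N\rangle$, and concludes via the divergence theorem together with the Cauchy--Schwarz bound $\langle N,N'\rangle\le 1$. The only cosmetic differences are that the paper computes the divergence intrinsically from \eqref{TS} rather than from the non-parametric equation \eqref{soliton}, and it handles a general competitor by decomposing $\Sigma'$ into pieces lying on one side of $\Sigma$ instead of invoking the language of currents.
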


\begin{proof} Let $\Psi_\tau (x,t) =\Psi ((x,t), \tau) = (x, t+\tau)$ the flow generated by $X=\partial_t$ in $M\times \mathbb{R}$.  Consider the one-parameter family of translated copies of $\Sigma$ given by
\[
\Sigma_\tau = \Psi_\tau (\Sigma)
\]
and let $N_\tau$ be the vector field in $\bar\Omega \times\mathbb{R}$ given by 
\[
N_\tau (\Psi_\tau(x,t)) = \Psi_{\tau *}(x,t) \cdot N(x, t),
\]
where $N$ is a unit normal vector field along $\Sigma$. It is obvious that $N_\tau$ is a normal vector field along $\Sigma_\tau$. Consider the vector field in $\bar\Omega \times \mathbb{R}$ defined by
\begin{equation}
Y = e^{ct} N_\tau. 
\end{equation}
Let $\{{\sf e}_i\}_{i=1}^m$ be a local orthonormal frame tangent to $\Sigma_\tau$. Therefore, denoting the Riemannian connection and the divergence in $(M\times\mathbb{R}, g_0)$ by $\bar\nabla$ and $ \operatorname{div}$, we obtain 
\begin{eqnarray*}
& & \operatorname{div} Y|_{\Sigma_\tau} = ce^{ct} \langle \bar\nabla t, N_\tau\rangle + e^{ct} {\rm div} N_\tau \\
& & \,\,= ce^{ct} \langle X, N_\tau\rangle +e^{ct}\sum_{i=1}^m \langle \bar\nabla_{{\sf e}_i} N_\tau, {\sf e}_i\rangle + e^{ct}\langle \bar\nabla_{N_\tau} N_\tau, N_\tau\rangle\\
& &\,\, = ce^{ct} \langle X, N\rangle - e^{ct} H_\tau, 
\end{eqnarray*}
where we used the fact that $|N_\tau|^2=1$. Here $H_\tau(\Psi_\tau(x,t)) = H(x,t)$ is the mean curvature of $\Sigma$. Using (\ref{TS}), we conclude that
\begin{equation}
\operatorname{div} Y = e^{ct}(H-c\langle X, N\rangle) =0
\end{equation}
in $\bar\Omega \times\mathbb{R}$. First, suppose that  $\Sigma'\subset \bar{\Omega}\times\mathbb{R}$ lies in one side of $\Sigma$ with $\partial\Sigma=\partial \Sigma'$ and denote by $U$ the domain bounded by $\Sigma\cup \Sigma'$. We have
\begin{eqnarray}
\nonumber  0 &=&\int_U\operatorname{div}Y=\int_{\Sigma} \langle Y,N\rangle\, {\rm d}\mu_{\Sigma}-\int_{\Sigma'}\langle Y, N'\rangle\, {\rm d}\mu_{\Sigma'}\\
\nonumber &  = &\int_{\Sigma}e^{ct} \,{\rm d}\mu_{\Sigma}-\int_{\Sigma'} e^{ct} \langle N, N'\rangle\, {\rm d}\mu_{\Sigma'}\\
\nonumber &  \geq & \int_{\Sigma}e^{ct} \, {\rm d}\mu_{\Sigma}-\int_{\Sigma'}e^{ct}\, {\rm d}\mu_{\Sigma'},
\end{eqnarray}
where $N'$ and ${\rm d}\mu_{\Sigma'}$ define the orientation and the volume element in $\Sigma'$, respectively. This completes the proof in this particular case.  The general case follows by breaking up the hypersurface $\Sigma'$ into regions that lie on one side of $\Sigma$ and applying the previous argument to get the inequality in each one of these regions. 
\end{proof}
\begin{Remark}
Similar result was proved by Y. L. Xin \cite{XIN16} for translating graphs in $\R^{n+1}.$ 
\end{Remark}
\begin{Remark}
Notice that by Lemma \ref{Homology} translating graphs are, in fact, area-minimizing with respect to $g_c$. 
\end{Remark}

Next we provide a proof for the first equality in (\ref{HIH}), that is, the relation between the mean curvatures with respect to $g_c$ and $g_0$. 
The Riemannian connections $\widetilde\nabla$ and $\nabla$ for the metrics $g_c$ and $g_0$, respectively, are related by
\[
\widetilde\nabla_V W = \nabla_V W +\frac{c}{m} \big(\langle V, \partial_t\rangle W + \langle W, \partial_t\rangle V - \langle V, W\rangle \partial_t\big).
\]
If $N$ is a unit normal vector field along $\Sigma$ with respect to $g_0$, here it is not necessary to suppose that $\Sigma$ is a graph. Then the normal vector field with respect to $g_c$ is given by $\widetilde N = e^{-\frac{c}{m}t}N$. Therefore, the second fundamental forms $\widetilde{II}$ and $II$ of $\Sigma$ with respect to $g_c$ and $g_0$ are related by
\[
\widetilde {II} = e^{\frac{c}{m}t} (II - \frac{c}{m}\langle \partial_t, N\rangle g_0|_\Sigma)
\]
Taking traces with respect to $g_c|_\Sigma$ one gets
\begin{equation}
\label{HHI}
\widetilde H = e^{-\frac{c}{m}t}(H -c\langle \partial_t, N\rangle)
\end{equation}
as we have stated above.

%For do this, take local coordinates $\{x_i\}$ in $M$ and consider the local coordinate $\{x_{i},x_{m+1}=t\}$ in $M\times\R$. Suppose that $g=g_{ij}dx_idx_j$, $\Gamma^{k}_{ij}$ is the Christoffel's symbol with respect $g$ and $\widetilde{\Gamma}^{k}_{ij}$ is the Christoffel's symbol with respect $g_I=e^{t\frac{2}{m}}g$. Then, we have
%\begin{equation}\label{Christ.}
%\widetilde{\Gamma}^{k}_{ij}=\Gamma^{k}_{ij}+\frac{1}{m}\left(\delta_{j}^k\delta_{i}^{m+1}+\delta_{i}^k\delta_{j}^{m+1}-\delta_{m+1}^{k}g_{ij}\right).
%\end{equation}
%If assume that $\widetilde{\nabla}$ is the Riemannian connection which is associated to $g_I$ and $\nabla$ is the Riemannian connection which is associated to $g$, from \eqref{Christ.} we obtain the next equality
%\begin{equation}\label{Der. t}
%\widetilde{\nabla}_{\partial_t}\partial_t=\frac{1}{m}\partial_t.
%\end{equation}
%On the other hand, if $1\leq i,j\leq m,$ then
%\begin{equation}
%\widetilde{\nabla}_{\partial_i}\partial_j=\nabla_{\partial_i}\partial_j-\frac{g_{ij}}{m}\partial_t.
%\end{equation}

As an application of (\ref{HHI}), we have the next lemma.
\begin{Lemma}\label{Product Lemma}
Suppose that $\Lambda$ is a hypersurface in $M$. Then the mean curvature $\widetilde H_{\Lambda\times\R}$ of $\Lambda\times\R$ in $(M\times\R, g_c)$ is given by
\begin{equation}
\label{HHI2}
\widetilde H_{\Lambda\times\R}(x,t)=e^{-\frac{c}{m}t}H_{\Lambda}(x).
\end{equation}
for all $(x,t)\in \Lambda\times \mathbb{R}$. Here $H_\Lambda$ is the mean curvature of $\Lambda$ in $(M, \sigma)$.
\end{Lemma}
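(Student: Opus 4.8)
The plan is to apply the already established relation \eqref{HHI} directly to the hypersurface $\Sigma=\Lambda\times\R$. That formula expresses the mean curvature $\widetilde H$ computed with respect to Ilmanen's metric $g_c$ in terms of the mean curvature $H$ and the scalar $\langle\partial_t,N\rangle$ computed with respect to the product metric $g_0=\sigma+{\rm d}t^2$; crucially, \eqref{HHI} was derived without assuming $\Sigma$ is a graph, so it is legitimately available for $\Lambda\times\R$. Thus the entire task reduces to computing two quantities for $\Lambda\times\R$ with respect to $g_0$: the value of $\langle\partial_t,N\rangle$, and the mean curvature $H$.

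First I would pin down the unit normal. Since $\Lambda\times\R$ is a metric product and $g_0=\sigma+{\rm d}t^2$ splits, the tangent space at $(x,t)$ is $T_x\Lambda\oplus\R\,\partial_t$. Hence the unit normal $N$ of $\Lambda\times\R$ in $(M\times\R,g_0)$ is the horizontal lift of the unit normal $\nu$ of $\Lambda$ in $(M,\sigma)$, so that $N$ is orthogonal to $\partial_t$ and $\langle\partial_t,N\rangle=0$. Second, I would compute $H$. Choosing a local orthonormal frame $\{e_1,\dots,e_{n-1}\}$ tangent to $\Lambda$ in $M$ and adjoining $\partial_t$ yields an orthonormal frame of $\Lambda\times\R$ with respect to $g_0$. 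Because $\partial_t$ is parallel in $(M\times\R,g_0)$ and the connection respects the product splitting, the second fundamental form of $\Lambda\times\R$ vanishes on any pair involving $\partial_t$ and restricts on the remaining directions to the second fundamental form of $\Lambda$ in $M$; equivalently, the $\R$-factor is totally geodesic. Taking traces (recall the convention that the mean curvature is the full trace of the second fundamental form) gives $H=H_\Lambda$.

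Substituting $\langle\partial_t,N\rangle=0$ and $H=H_\Lambda$ into \eqref{HHI} yields
\[
\widetilde H_{\Lambda\times\R}(x,t)=e^{-\frac{c}{m}t}\bigl(H_\Lambda(x)-c\cdot 0\bigr)=e^{-\frac{c}{m}t}H_\Lambda(x),
\]
which is exactly \eqref{HHI2}. I do not anticipate a genuine obstacle in this argument: the only point deserving care is the product/totally-geodesic computation of the second fundamental form, namely verifying that the $\partial_t$-direction contributes nothing to the mean curvature. This follows at once from the parallelism of $\partial_t$ for the product metric $g_0$ and from the compatibility of the normals $N$ and $\nu$, which also guarantees that $H$ and $H_\Lambda$ are measured with respect to consistently oriented normals so that the sign conventions match.
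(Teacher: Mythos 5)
Your proposal is correct and takes essentially the same route as the paper: the authors likewise apply \eqref{HHI} to $\Lambda\times\R$, noting that $\langle\partial_t,N\rangle=0$ because $N$ is the horizontal lift of the unit normal of $\Lambda$, and that $H_{\Lambda\times\R}=H_\Lambda$ with respect to $g_0$. Your write-up merely fills in the frame computation (the totally geodesic $\R$-factor) that the paper leaves implicit.
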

\begin{proof} First, observe that 
\[
H_{\Lambda\times\mathbb{R}}(x,t) = H_\Lambda(x)
\]
for all $(x,t)\in \Lambda\times \mathbb{R}$, where both mean curvatures are calculated with respect to $g_0$. Moreover, $\langle \partial_t, N\rangle=0$ along $\Lambda\times\mathbb{R}$ since $N$ is merely the horizontal lift of the unit normal vector field along $\Lambda$ in $M$. Hence, (\ref{HHI}) yields (\ref{HHI2}).
\end{proof}

Before proving the main theorem of this section, we recall what we mean by a complete graph.
\begin{Definition}
Let $\Omega\subset M$ be a domain, not necessarily regular, and let $\Lambda \subset \partial \Omega$ be a smooth open set. We say that a smooth function $u \colon\Omega\to\R$ is complete as we approach $\Lambda$, if $$\lim_{x \to x_0}u(x)=\pm\infty, \mbox{ for any $x_0 \in \Lambda.$}$$
\end{Definition} 
\begin{Theorem} 
%\ref{main-thm-transmin}
\label{main}
Let $M$ be a complete Riemannian manifold and $\Omega\subset M$ be a domain {\rm(}not necessarily regular{\rm)}. Let $\Lambda \subset \partial \Omega$ be a smooth open set and $\Sigma$ a translating or minimal graph of a smooth function $u \colon\Omega\to\R$ that is complete as we approach $\Lambda$. Then $H_{\Lambda}=0.$
%Take a smooth open set $\Lambda$ of $\partial \Omega$. Let $\Sigma$ be a translating or minimal graph of a smooth function $u \colon\Omega\to\R$ in $M\times\R$ which is complete as we approach of $\Lambda$. Then $H_{\Lambda}=0.$
\end{Theorem}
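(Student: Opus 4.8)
The plan is to handle the translating and the minimal case at once by viewing $\Sigma$ as a hypersurface that is \emph{minimal} for Ilmanen's metric $g_c=e^{\frac{2c}{m}t}(\sigma+\mathrm{d}t^2)$, with $c=0$ recovering the minimal case. Indeed, by Ilmanen's Lemma a translating soliton is $g_c$-minimal, by the Shahriyari--Zhou Lemma its graph is $g_c$-stable, and by Lemma \ref{Homology} it is $g_c$-area-minimizing. The idea is then to slide $\Sigma$ vertically off to infinity along $\Lambda$, take a varifold limit supported on the cylinder $\Lambda\times\R$, and conclude with Lemma \ref{Product Lemma}: since that limit will be $g_c$-minimal and $\widetilde H_{\Lambda\times\R}(x,t)=e^{-\frac{c}{m}t}H_\Lambda(x)$, minimality of the cylinder forces $H_\Lambda=0$.

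First I would fix $x_0\in\Lambda$, assume $u\to+\infty$ at $\Lambda$ (reflecting $t\mapsto-t$ otherwise), and localize to a small ball $B\subset M$ about $x_0$ with $\partial\Omega\cap\bar B\subset\Lambda$. Setting $\Sigma_k:=\graph(u-k)$, each $\Sigma_k$ again solves \eqref{TS}, so it is a $g_c$-minimal, $g_c$-stable, $g_c$-area-minimizing graph. The crucial point, as announced in the introduction, is the uniform local area bound $\limsup_k\mu_{\Sigma_k}\big(B'\times(-1,1)\big)<\infty$ on a slightly smaller ball $B'\Subset B$; this is the standard a priori area bound enjoyed by $g_c$-area-minimizing hypersurfaces. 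Since $u\to+\infty$ only along $\Lambda$, the level sets $\{u=k\}$ accumulate on $\Lambda$, which furnishes points $p_k=(x_k,0)\in\Sigma_k$ with $x_k\to x_0$, hence $p_k\to(x_0,0)=:p_\infty$.

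These are precisely the hypotheses of the compactness Theorem \ref{Wickramasekera} for $g_c$: a subsequence of $\{\Sigma_k\}$ converges weakly to a stationary integral $g_c$-varifold $S_\infty$ with $p_\infty\in\operatorname{spt}S_\infty$, smoothly away from a singular set of Hausdorff codimension at least $7$. I would next identify $\operatorname{spt}S_\infty$ near $p_\infty$ with $\Lambda\times\R$: a point of $\Sigma_k$ in $B'\times(-\varepsilon,\varepsilon)$ has the form $(x,u(x)-k)$ with $u(x)\approx k$, and as $k\to\infty$ such $x$ can accumulate only on $\Lambda$, so $\operatorname{spt}S_\infty$ is contained in the smooth $n$-dimensional hypersurface $\Lambda\times\R$. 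By the constancy theorem, a stationary integral varifold supported in a connected smooth hypersurface of the same dimension is a constant multiple of it, hence $\operatorname{spt}S_\infty=\Lambda\times(-\varepsilon,\varepsilon)$ near $p_\infty$, the point $p_\infty$ is regular, and the cylinder is classically $g_c$-minimal there. Evaluating Lemma \ref{Product Lemma} at $(x_0,0)$ then gives $0=\widetilde H_{\Lambda\times\R}(x_0,0)=H_\Lambda(x_0)$, and since $x_0\in\Lambda$ was arbitrary, $H_\Lambda\equiv0$.

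The hard part, I expect, is the middle step: justifying rigorously that the level sets $\{u=k\}$ converge to $\Lambda$ (this is where completeness and the localization $\partial\Omega\cap\bar B\subset\Lambda$ enter, ensuring $u$ blows up \emph{only} along $\Lambda$ and stays bounded on compact subsets of $\Omega\cap B$ away from $\Lambda$, and also that $\Sigma_k$ is boundaryless inside $B'\times(-1,1)$ for large $k$), and upgrading the inclusion $\operatorname{spt}S_\infty\subset\Lambda\times\R$ to an equality with genuine regularity at $p_\infty$. The area estimate and the application of the compactness theorem are comparatively routine once the correct localization is in place.
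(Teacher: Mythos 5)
Your proposal is correct in substance and shares the paper's skeleton: vertical translates of the graph, local area bounds coming from the $g_c$-area-minimizing property (Lemma \ref{Homology}), varifold compactness, and Lemma \ref{Product Lemma} to pass from $\widetilde H_{\Lambda\times\R}=0$ to $H_\Lambda=0$. Where you genuinely diverge is the decisive identification step. The paper uses the strong (Schoen--Simon type) compactness of Theorem \ref{Wickramasekera} to get smooth convergence away from a singular set of dimension at most $n-7$, and then runs a dichotomy at $(x_0,0)$: if it is a regular point of $S_\infty$, a neighbourhood of it is shown to lie in $\Lambda\times\R$ by a contradiction argument (a small compact cylinder $C$ inside $\Omega\times\R$ would have to be hit by $S_i$ for large $i$, impossible since $u(x_i)\to\infty$); if it is singular, regular points are dense nearby and one concludes by continuity of $\widetilde H_{\Lambda\times\R}$. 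You instead prove the inclusion $\operatorname{spt}S_\infty\subset\Lambda\times\R$ directly (every point of the support over $B'$ is a limit of points $(x,u(x)-k)$ with $u(x)\to\infty$, hence lies over $\Lambda$) and then invoke the constancy theorem: a stationary integral varifold supported in a connected smooth hypersurface of the same dimension is a constant multiple of it, so the cylinder itself is $g_c$-stationary, hence $g_c$-minimal, near $p_\infty$. This buys something real: you never need the regular/singular dichotomy, the codimension-seven bound, or even stability --- weak compactness of stationary integral varifolds with local mass bounds would suffice --- so your argument is dimension-free for the same reason the paper's is, but with less machinery. Your route is essentially the one the paper itself uses for the CMC case in Remark \ref{re:CMC} (support containment), except that there the paper still appeals to regularity of the limit rather than to constancy.

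One detail needs repair. Translating by integers, you assert points $x_k$ with $u(x_k)=k$ and $x_k\to x_0$; but accumulation of the level sets $\{u=k\}$ on $\Lambda$ by itself does not place the limit point $p_\infty$ over $x_0$ --- it could sit over some other point of $\Lambda\cap\bar B'$, possibly even on $\partial B'$, where the varifold convergence gives no information. Either supply the missing elementary argument (for small $r$ the set $\Omega\cap B_r(x_0)$ is a union of half-balls with $x_0$ in their closure, so by connectedness and the intermediate value theorem $u$ attains every sufficiently large value inside $B_r(x_0)$; then diagonalize in $r$ and $k$), or simply adopt the paper's normalization $\Sigma_i=\operatorname{Graph}[u-u(x_i)]$ with a sequence $x_i\to x_0$ fixed in advance, which makes $(x_i,0)\in\Sigma_i$ automatic and, via the conclusion $p_\infty\in\operatorname{spt}S_\infty$ built into Theorem \ref{Wickramasekera}, forces $(x_0,0)\in\operatorname{spt}S_\infty$. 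With that adjustment your proof is complete.
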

\begin{proof}
Fix $x_0\in\Lambda$ and take a sequence $\{x_i\}$ in $\Omega$ with $x_i\to x_0.$ Assume that $u(x_i)\to\infty$ and define the sequence of hypersurfaces  $\{\Sigma_i:={\rm Graph}[u-u(x_i)]\}$ in $M\times\R$ endowed with the Ilmanen's metric $g_c$. 

Fix a closed ball $B$ around $(x_0,0)$ in $M\times\R$ so that $B$ does not intersect $\partial \Lambda\times\R.$ Notice that each $\Sigma_i$ intersects $B$ for $i$ sufficiently large. So, up to subsequence, we can suppose that each $\Sigma_i$ intersects $B$. For each $i$ let $S_i$ be the connected component of $\Sigma_i\cap B$ so that $(x_i,0)\in S_i.$ By Lemma \ref{Homology}
\begin{equation*}
\mathcal{A}_c[S_i]\leq\mathcal{A}_c[\partial B],
\end{equation*}
for all $i.$ Hence, by Theorem \ref{Wickramasekera}, up to a subsequence, we may assume $S_i\to S_{\infty}$ in $\operatorname{int}B\setminus \operatorname{Sing}(S_{\infty})$ and $S_i\rightharpoonup S_{\infty}$ in $\operatorname{int}B.$ Note also that $(x_0,0)\in(\Lambda\times\R)\cap \operatorname{spt}S_{\infty}$. 
Then we have two possibilities for $(x_0,0)$: either it is a regular point or not.

Suppose first that $(x_0,0)$ is a regular point of $S_{\infty}.$ We claim that a small neighbourhood of $(x_0,0)$ in $S_{\infty}$ lies on the cylinder $\Lambda\times\R.$ In fact, if this does not hold for any small neighbourhood of $(x_0,0)$, then we could choose a small domain $S$ in $S_{\infty}$ near $(x_0,0)$ and away from the  singular set, such that $S$ lies in $\Omega\times\R.$ Therefore, we could take a small compact cylinder $C$ through $S$ in $M\times\R$ such that it does not touch $\partial\Omega\times\mathbb{R}.$ Since $S_i\to S_{\infty}$ in $\operatorname{int}B\setminus\operatorname{Sing}(S_{\infty}),$ it follows that, for $i$ sufficiently large, $S_i$ must intersect $C.$ However the assumption $u(x_i)\to\infty$ implies that, for sufficiently large $i$, $S_i\cap C=\varnothing$, which is a contradiction. Therefore a neighbourhood of $(x_0,0)$ in the minimal surface $S_\infty$ lies on $\Lambda\times\mathbb{R}$ and, in particular, we conclude that $\widetilde H_{\Lambda\times\R}(x_0,0)=0$, that is, the mean curvature of $\Lambda\times \mathbb{R}$ with respect to $g_c$ vanishes at $(x_0,0)=0$.

If $(x_0,0)$ is not a regular point of $S_{\infty}$, take a neighborhood $W$ of $(x_0,0)$ in $S_{\infty}$. As the Hausdorff dimension of $\Sing(\Sigma_{\infty})$ is less than $n-7$, we know that 
$W\setminus\Sing (S_\infty)$ is an open dense subset of $W$. Furthermore, we can apply the previous argument to prove that any connected component of  $W\setminus\Sing (S_\infty)$, which is regular, must lie on $\Lambda\times\mathbb{R}$. Hence, we can take a sequence $\{y_i\}\subset \Lambda\times\R$ such that $y_i\to (x_0,0)$ and $\widetilde H_{\Lambda\times\R}(y_i)=0. $ By continuity $\widetilde H_{\Lambda\times\R}(x_0,0)=0.$ Therefore in both cases, Lemma \ref{Product Lemma} implies that we must have $H_{\Lambda}=0$ on $\Lambda$. 
\end{proof}
Finally, using Theorem \ref{Bellettini-Wickramasekera} and the idea of the proof of Theorem \ref{main}, we can obtain a proof of a result that was obtained by M. Eichmair, J. Metzger for $2 \leq n \leq 7$ \cite[Appendix B]{eichmair-metzger}.
\begin{Remark} \label{re:CMC}
Let $M$ be a complete Riemannian manifold and $\Omega\subset M$ be a domain whose boundary is not necessarily regular. Let $\Sigma$ be a graph of a smooth function $u\colon \Omega \to \R$ with constant mean curvature $H_0>0$. Let $\Lambda \subset \partial \Omega$ be a smooth open set  and suppose that $u$ is complete as we approach $\Lambda$.
\begin{enumerate}
\item[{\rm a.}]If $u\to\infty$ on $\Lambda$, then $H_{\Lambda}=H_0$ with respect to the inward normal to $\partial\Omega.$
\item[{\rm b.}]If $u\to-\infty$ on $\Lambda$, then $H_{\Lambda}=-H_0$ with respect to the inward normal to $\partial\Omega$.
\end{enumerate}
\end{Remark}
The idea of the proof (which was already present in \cite{eichmair-metzger}) is the following.  We fix $x_0\in\Lambda$ and take a sequence $\{x_i\}$ in $\Omega$ with $x_i\to x_0.$ Assume first that $u(x_i)\to\infty$ and define the sequence of hypersurfaces  $\{\Sigma_i:={\rm Graph}[u-u(x_i)]\}$ in $M\times\R$. Fix a closed ball $B$ around $(x_0,0)$ in $M\times\R$ so that $B$ does not intersect $\partial \Lambda\times\R$, and suppose that each $\Sigma_i$ intersects $B$. For each $i$ let $S_i$ be the connected component of $\Sigma_i\cap B$ so that $(x_i,0)\in S_i.$ Reasoning as in Lemma \ref{Homology}, we deduce that
\begin{equation*}
\mathcal{A}_0[S_i]\leq\mathcal{A}_0[\partial B]+H_0\operatorname{Vol}(B),
\end{equation*}
for all $i.$ Moreover, it is easy to check that $S_i$ is stable for all $i.$ Hence Theorem \ref{Bellettini-Wickramasekera} implies that, up to a subsequence, we may assume $S_i\rightharpoonup S_{\infty}$ in $\operatorname{int}B$ and $(x_0,0)\in(\Lambda\times\R)\cap \operatorname{spt}S_{\infty}$.\ 

We claim that $\operatorname{spt} S_{\infty}\subset\Lambda\times\R.$ Indeed, suppose that $y_0\notin\Lambda\times\R$ and take any small ball $B'$ around $y_0$ so that it does not intersect $\Lambda\times\R.$ If $\varphi$ is any smooth function with support in $B',$ our definition of $S_i$ give us that 
\[
\int_{B}\varphi{\rm d}\mu_{S_i}=0
\] 
for all sufficiently large $i$. Hence,
\begin{equation*}
\int_{B}\varphi{\rm d}\mu_{S_{\infty}}=\lim_i\int_{B}\varphi{\rm d}\mu_{S_i}=0.
\end{equation*}  
This proves that $y_0\notin\operatorname{spt}S_{\infty}.$ Consequently we must have $\operatorname{spt} S_{\infty}\subset\Lambda\times\R,$ and by regularity of $S_{\infty}$ according to Theorem \ref{Bellettini-Wickramasekera}, we can argue as in Theorem \ref{main} and conclude that $H_{\Lambda}=H_0.$ On the other hand, if $u(x_i)\to-\infty,$  we can argue as above and get $-H_{\Lambda}=H_0,$ where $-H_{\Lambda}$ is the mean curvature with respect to the outward normal to $\Omega$. So $H_{\Lambda}=-H_0$ with respect to the inward normal to $\Omega$.

\subsection{The Euclidean case}

In the Euclidean case $M=\R^n$ endowed with the Euclidean metric $\left<\cdot, \cdot \right>$,
we can obtain a better result. In this particular case, it is very natural to impose on a translators $\Sigma \subset \R^{n+1}$ the condition that
\begin{equation}  \label{eq:entropy}  
\sup_{x \in \R^{n+1}, \; r>0 } \left( \frac{\Area ( \Sigma \cap B(x,r) )}{r^n} \right) < \infty. 
\end{equation}
%(This quantity is within a constant factor of the Huisken-Colding-Minicozzi entropy of $\Sigma.$)
In particular, if $\Sigma$ arise as a blow up of some mean curvature flow, then it has to satisfy \eqref{eq:entropy}
by \cite[Corollary 2.13]{colding-minicozzi}.

The theorem is then:

\begin{Theorem}\label{eucl-thm}
Suppose $u\colon \Omega \subset \R^n \rightarrow \R$ is  a smooth  function whose graph is a complete translator
satisfying \eqref{eq:entropy}. Then the following holds.
\begin{enumerate}[(i)]
\item  If $n<8$, then $\partial\Omega$ is a smooth minimal hypersurface.
\item  For general $n$, $\partial\Omega$ is a smooth minimal hypersurface
except for a closed singular set of Hausdorff dimension at most $n-8.$
\end{enumerate}
\end{Theorem}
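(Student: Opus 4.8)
The plan is to recover the full boundary $\partial\Omega$ as the limit of vertical translates of $\Sigma$ and then read off its regularity from the compactness and regularity theory for stable minimal hypersurfaces. Since $\Sigma=\graph(u)$ is a complete translator over $\Omega\subsetneq\R^n$, completeness forces $|u(x)|\to\infty$ as $x\to\partial\Omega$; write $\partial\Omega_+$ (resp. $\partial\Omega_-$) for the part of the boundary where $u\to+\infty$ (resp. $-\infty$). I would treat $\partial\Omega_+$ by pushing the graph downward and $\partial\Omega_-$ by pushing it upward, the two cases being symmetric. For $s>0$ set $T_s:=\Sigma-s\,\partial_t=\graph(u-s)$. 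Each $T_s$ is again a translating graph over $\Omega$, hence stable and minimal with respect to Ilmanen's metric $g_c$ (by Ilmanen's lemma and the Shahriyari--Zhou stability lemma), and, crucially, the entropy bound \eqref{eq:entropy} is invariant under the Euclidean isometries $x\mapsto x-s\,\partial_t$, so the family $\{T_s\}$ enjoys area ratio bounds that are uniform in $s$. This uniformity is exactly the extra input that is unavailable in the general setting of Theorem \ref{main}, and it is what will let us control the whole boundary rather than a single smooth piece.

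Next I would let $s\to\infty$ and extract a limit. Fixing any ball $B=B((x_0,0),R)$, the uniform area bounds give $\limsup_s\mu_{T_s}(B)<\infty$, so Theorem \ref{Wickramasekera} (applied in $(\R^{n+1},g_c)$) yields a stable stationary integral varifold $V$ with $T_{s_j}\rightharpoonup V$ and $T_{s_j}\to V$ smoothly away from $\Sing V$. The heart of the argument is to identify $\operatorname{spt}V=\overline{\partial\Omega_+}\times\R$. If $x_0\in\Omega$, then $u$ is bounded on a neighbourhood of $x_0$, so $T_s$ sits at height tending to $-\infty$ there and eventually leaves any fixed ball; if $x_0\notin\bar\Omega$ the graph is simply not present; hence $\operatorname{spt}V\subseteq\partial\Omega\times\R$. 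Conversely, fix $x_0\in\partial\Omega_+$ and any $t_0\in\R$: since $u$ is continuous and blows up at $x_0$, for every large $s$ the level set $\{u=s+t_0\}$ accumulates at $x_0$, so $T_s$ carries points $(x,u(x)-s)$ arbitrarily close to $(x_0,t_0)$, whence $(x_0,t_0)\in\operatorname{spt}V$. Finally, because $T_{s+a}=T_s-a\,\partial_t$, the limit is invariant under vertical translation, so $V=W\times\R$ is a cylinder with $\operatorname{spt}W=\overline{\partial\Omega_+}$.

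Once $V$ is known to be a vertical cylinder over $\partial\Omega_+$, minimality in the Euclidean metric is immediate. Indeed $V$ is $g_c$-stationary as a limit of $g_c$-minimal hypersurfaces, and by Lemma \ref{Product Lemma} (equation \eqref{HHI2}) a vertical cylinder is $g_c$-minimal if and only if it is minimal for $\langle\cdot,\cdot\rangle$; equivalently, the normal of a vertical cylinder is horizontal, so the soliton term $\langle\partial_t,N\rangle$ vanishes and the translator equation degenerates to $H=0$. Thus $\partial\Omega_+$, and symmetrically $\partial\Omega_-$, hence all of $\partial\Omega$, is a stable minimal hypersurface, and Theorem \ref{Wickramasekera} gives that $\operatorname{spt}V$ is a smooth embedded hypersurface away from a closed set $\Sing V$ with $\dim\Sing V\le n-7$.

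The last step, which I expect to be the real point of the Euclidean improvement, is the dimension reduction. Since $V=W\times\R$ is translation invariant, so is its singular set: $\Sing V=\Sing(\partial\Omega)\times\R$, whence $\dim\Sing V=\dim\Sing(\partial\Omega)+1$. Combining this with $\dim\Sing V\le n-7$ yields $\dim\Sing(\partial\Omega)\le n-8$, which is assertion (ii); and when $n<8$ the right-hand side is negative, forcing $\Sing(\partial\Omega)=\varnothing$ and hence the smoothness in (i). The main obstacle is the clean identification of $V$ with the full cylinder $\partial\Omega\times\R$ -- in particular verifying the translation invariance of the limit and that $\operatorname{spt}V$ is exactly $\partial\Omega$ rather than a proper subset -- together with transporting the regularity statement for the $n$-dimensional object $V$ back to the $(n-1)$-dimensional object $\partial\Omega$ through the cylinder structure.
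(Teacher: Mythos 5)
Your proposal follows the same overall strategy as the paper's proof---vertical translates of the graph, uniform local area bounds supplied by the entropy hypothesis \eqref{eq:entropy}, Schoen--Simon compactness for stable $g_c$-minimal hypersurfaces, and identification of the limit with the cylinder over the boundary---but your mechanism for the crucial improvement from $n-7$ to $n-8$ is genuinely different. The paper gets the extra dimension by descending \emph{stability}: since $(\partial\Omega)\times\R$ is stable for the Ilmanen metric, each slice $(\partial\Omega)\cap B$ is a stable minimal hypersurface of the \emph{Euclidean} $\R^n$, and applying Schoen--Simon regularity in this one-dimension-lower ambient space gives a singular set of dimension at most $(n-1)-7=n-8$. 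You instead stay at the level of the $n$-dimensional limit varifold $V$ and descend the \emph{singular set}: $\Sing V=\Sing(\partial\Omega)\times\R$, and since $\dim_H(A\times\R)\ge\dim_H(A)+1$, the Schoen--Simon bound $\dim_H\Sing V\le n-7$ forces $\dim_H\Sing(\partial\Omega)\le n-8$. Both routes are valid, and yours buys something concrete: it bypasses the paper's unproved assertion that $g_c$-stability of the cylinder implies Euclidean stability of the slice, replacing it by two elementary facts---a transversality argument (at a regular point of $\operatorname{spt}V$ the vertical line through the point lies in the support, so $\partial_t$ is tangent and the horizontal slice $\R^n\times\{t\}$ meets $\operatorname{spt}V$ transversally in a smooth copy of $\partial\Omega$) and the product inequality for Hausdorff dimension, whose relevant direction follows from a Fubini-type covering estimate. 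Your identification of $\operatorname{spt}V$, split between $\partial\Omega_+$ and $\partial\Omega_-$, is also more careful than the paper's, which simply declares the limit to be $(\partial\Omega)\times\R$.

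Two caveats. First, your opening claim that completeness of the graph forces $|u|\to\infty$ at $\partial\Omega$ is not a consequence of metric completeness alone (a bounded, wildly oscillating graph over an interval is intrinsically complete); it needs the translator/graph structure, e.g.\ via interior gradient estimates. This is a gap you share with the paper, whose proof silently relies on the same fact when it asserts that the limit of its sequence is the full cylinder. Second, you flag varifold-level translation invariance of $V$ as an obstacle, but you do not actually need it (and it is indeed delicate, since the limits along $\{s_j\}$ and $\{s_j+a\}$ could a priori be different varifolds): your two support inclusions already show that $\operatorname{spt}V$ is a cylinder \emph{as a set}, and this set-level product structure is all that the slicing and dimension arguments require.
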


\begin{proof}
Fix $x_0\in\partial \Omega$ and take a sequence $\{x_i\}$ in $\Omega$ with $x_i\to x_0.$ As before,
we define the sequence of hypersurfaces  $\{\Sigma_i:={\rm Graph}[u-u(x_i)]\}$ in $\R^{n+1}$ 
endowed with the Ilmanen's metric $g_c$. 
The idea of the proof is that the result of Schoen and Simon \cite{Schoen-Simon}
shows that a weak limit $V$ of smooth $n$-dimensional stable minimal hypersurfaces, 
is smooth except for an $(n-7)$-dimensional singular set, assuming we
have local area bounds.  These local area bounds are provided by the assumption \eqref{eq:entropy}.

Furthermore, any point of $V$ where the tangent cone is a plane (possibly with
multiplicity) is a regular point.
%*********TO BE CONTINUED
Now we can apply the Schoen-Simon theorem to $V = (\partial \Omega)\times \R,$ which is the limit of
our sequence $\Sigma_i$.
At first glance, it seems that we get a singular set of dimension at most $n-7$, not $n-8.$
But since $V$ is translation-invariant in the vertical direction, in fact one can get one dimension
better.
One way to see that is the following.

Since $(\partial \Omega) \times \R$ is stable (and has a small singular set) for the Ilmanen metric,
it follows that for any sufficiently small ball $B$ in $\R^n$,  $(\partial \Omega) \cap B$ 
is stable for the Euclidean metric.
Hence the Schoen-Simon theorem implies that the singular set of $(\partial \Omega)\cap B$
has dimension at most $n-8.$
\end{proof}

\begin{Remark}
Under additional hypotheses on the geometry of the translator, it is possible to prove
that $\partial \Omega$ is not only minimal, but totally geodesic $($see \cite{Hoffman}.$)$
In the case of $\R^n$, this means that $\partial \Omega$ consists of  $($disjoint$)$ affine hyperplanes.
\end{Remark}

\section{Jenkins-Serrin Translating Solitons}\label{Jenkins-Serrin Translating Soliton}

In this section we study the existence of Jenkins-Serrin solutions for the translating soliton equation on Riemannian surfaces $M$. To be more precise, we will prove an existence theorem for  type 1 Jenkins-Serrin solutions for the translating soliton equation. 
Let us recall that $u$ is a solution of the translating soliton equation if
\begin{equation*}
\Div\left(\frac{\nabla u}{W}\right)=\frac{1}{W},
\end{equation*}
where $W = \sqrt[]{1+|\nabla u|^2 }$, and the gradient and divergence are taken with respect to the Riemannian metric $\sigma$ of $M$.  
The principal concepts we are going to need are the following:
%Before we go on, let us define the principal concepts that we are going to need. 

\begin{Definition}[Nitsche curve]
Let $\Omega$ be a domain in $M$ and $\Gamma\subset M\times\R$ be a Jordan curve. We say that $\Gamma$ is a Nitsche curve, if it admits a parametrization $\Gamma(t) = \{(\alpha(t),\beta(t)) \colon t\in\mathbb{S}^1\}$ such that $\alpha(t)$ is a monotone parametrization of $\partial\Omega$.
 This means that $\alpha \colon \mathbb{S}^1\to\partial\Omega$ is continuous and monotone, and 
 there exist closed disjoint intervals $J_1,\ldots,J_v$ such that $\alpha|_{J_i}$ is constant for all $i$ and $\alpha|_{\mathbb{S}^1\setminus\cup J_i}$ is one-to-one and smooth.
\end{Definition}

\begin{Definition}[Admissible domain]
Let $\Omega$ be a connected domain in $M$. We say that $\Omega$ is an admissible domain if it is geodesically convex and \(\partial\Omega\) is a union of geodesic arcs $A_1,\ldots,A_s,B_1\ldots,B_r$, convex arcs $C_1,\ldots,C_t$, the end points of these arcs and that no two arcs $A_i$ and no two arcs $B_i$ have a common endpoint.
\end{Definition}

\begin{Definition}[Admissible polygon]
Let $\Omega$ be an admissible domain. We say that $\mathcal{P}$ is an admissible polygon if $\mathcal{P}\subset\Omega$ and the vertices of $\mathcal{P}$ are chosen among the vertices of $\Omega.$
\end{Definition}

Let $\Gamma$ be a Nitsche curve over the boundary $\partial\Omega$ of an admissible domain $\Omega$. By a translating soliton with boundary $\Gamma$ we mean a translating soliton in $\Omega\times\R$ that is a graph over $\Omega.$ Using classical results about the solvability of the Plateau problem, we can prove that any Nitsche curve over an admissible domain admits a unique translating soliton with it as the boundary.

\begin{Theorem}[Local existence]\label{Existence}
Let $\Omega$ be an admissible domain in $M$ and $\Gamma$ a Nitsche curve over $\partial\Omega$. Then there exists a unique translating soliton with boundary $\Gamma$. 
\end{Theorem}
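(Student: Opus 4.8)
The plan is to recast the problem, via Ilmanen's correspondence, as a Plateau problem for minimal surfaces in the Ilmanen manifold $(M\times\R,g_c)$ and then to prove that the minimizer is a graph by a Rado--Nitsche type argument. Recall from the discussion preceding \eqref{HIH} that a surface in $M\times\R$ is a translating soliton of speed $c$ if and only if it is minimal with respect to $g_c$; hence it suffices to produce a $g_c$-minimal surface $\Sigma$ with $\partial\Sigma=\Gamma$ whose projection $\pi\colon M\times\R\to M$ restricts to a bijection $\operatorname{int}\Sigma\to\Omega$, for then $\Sigma={\rm Graph}[u]$ with $u$ solving \eqref{soliton}. First I would invoke the classical solvability of the Plateau problem: since $\Gamma$ is a rectifiable Jordan curve in $(M\times\R,g_c)$, the Douglas--Rado--Morrey theory (equivalently, minimization of $\mathcal{A}_c$ among integral currents bounded by $\Gamma$, as in Lemma \ref{Homology}) yields an area-minimizing, hence $g_c$-minimal, disk-type surface $\Sigma$ with $\partial\Sigma=\Gamma$.

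The second ingredient is a supply of barriers, which is exactly what Lemma \ref{Product Lemma} provides. For a geodesic $\gamma\subset M$ the vertical cylinder $\gamma\times\R$ has $H_\gamma=0$ and is therefore $g_c$-minimal, while over a convex arc $C_i$ the cylinder $C_i\times\R$ is $g_c$-mean-convex with its mean curvature vector pointing out of $\Omega\times\R$. Since $\partial\Omega$ consists of geodesic arcs $A_i,B_i$ and convex arcs $C_i$ and $\Omega$ is geodesically convex, the cylinder $\partial\Omega\times\R$ is mean-convex toward $\Omega$; the strong maximum principle then forces $\pi(\Sigma)\subset\bar\Omega$, and the maximum and minimum of the vertical coordinate on $\Gamma$ confine $\Sigma$ in the $t$-direction. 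Geodesic convexity of $\Omega$ enters essentially at this step and is reused in the graph argument below.

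The heart of the proof — and the step I expect to be the main obstacle — is to upgrade $\Sigma$ from a confined minimal surface to a graph over $\Omega$. Following Rado and Nitsche, I would first show that $\pi|_{\operatorname{int}\Sigma}$ has no vertical tangent planes: if $p\in\operatorname{int}\Sigma$ had a vertical tangent plane, let $\gamma$ be the geodesic in $M$ through $\pi(p)$ tangent to the horizontal projection of $T_p\Sigma$; then the $g_c$-minimal cylinder $\gamma\times\R$ would be tangent to $\Sigma$ at $p$, and the strong maximum principle for minimal surfaces would force $\Sigma$ and $\gamma\times\R$ to cross along at least two arcs through $p$. Tracing these arcs to the boundary, one finds that $\gamma\times\R$ meets $\Gamma$ in more points than the two permitted by the convexity of $\partial\Omega$ together with the monotonicity of $\alpha$, a contradiction. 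Hence $\pi|_{\operatorname{int}\Sigma}$ is a local diffeomorphism; since a geodesically convex domain is simply connected, a covering/degree argument combined with the degree-one boundary map $\alpha$ yields injectivity, so $\Sigma={\rm Graph}[u]$. The delicate bookkeeping here is the treatment of the vertical segments of $\Gamma$ (the intervals $J_i$ on which $\alpha$ is constant) and of the finitely many vertices, which must be shown not to obstruct graphicality in the interior.

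Finally, I would deduce uniqueness from the comparison principle for the quasilinear elliptic equation \eqref{soliton}: two such graphs $u_1,u_2$ share the same continuous boundary values on $\partial\Omega$ away from the finitely many vertices, so $\sup_{\Omega}(u_1-u_2)$ cannot be attained at an interior point, giving $u_1\le u_2$ and, by symmetry, $u_1=u_2$. Alternatively, uniqueness is immediate from the strict area-minimization in Lemma \ref{Homology}, since two distinct graphs with boundary $\Gamma$ would violate the equality case. I expect the existence of the Plateau solution, the barrier construction, and the final uniqueness to be essentially routine given the tools already assembled, with the Rado--Nitsche graphicality argument being where the real work lies.
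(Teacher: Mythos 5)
Your proposal follows essentially the same route as the paper: Ilmanen's correspondence turns the problem into a Plateau problem in $(M\times\R,g_c)$; mean convexity of $\partial\Omega\times\R$ (via Lemma \ref{Product Lemma}) confines the solution; graphicality is obtained by a Rado--Nitsche argument, intersecting $\Sigma$ with the $g_c$-minimal cylinders $\gamma\times\R$ over geodesics and counting intersections against the monotone boundary curve, exactly as in the paper (which then quotes Pinheiro for the final step that each vertical line meets $\Sigma$ once). The only structural differences are that the paper obtains its minimal disk from the Meeks--Yau/Morrey theory for mean convex, homogeneous regions (so embeddedness comes for free), whereas you use Douglas--Rado--Morrey plus barriers, and that the paper leaves uniqueness implicit while you supply it; both of your uniqueness arguments work, the calibration one via the equality case of Lemma \ref{Homology} being the cleanest, and the comparison one being essentially Proposition \ref{Max. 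Principle}.

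There is, however, one false step in your confinement claim: slices do \emph{not} act as barriers from below. By \eqref{HHI}, the slice $M\times\{t_0\}$ has mean curvature $\widetilde H=-c\,e^{-ct_0/m}$ with respect to the upward normal, so its $g_c$-mean curvature vector points downward; the region above a slice is therefore \emph{not} mean convex in Ilmanen's metric, and a $g_c$-minimal surface with boundary $\Gamma$ can, and in general does, dip below $\min_\Gamma t$ --- the bowl soliton over a disk with constant boundary height does exactly this. Only the upper bound $\sup_\Sigma t\le \max_\Gamma t$ holds (equivalently, $t|_\Sigma$ is subharmonic on a translator, since $\Delta_\Sigma t = c\,|\partial_t^\perp|^2\ge 0$). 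This is not cosmetic in your set-up: the weight $e^{ct}$ and the geometry of $g_c$ degenerate as $t\to-\infty$, so lower confinement is precisely what you would need to run the direct method in the noncompact manifold $\bar\Omega\times\R$. You must substitute a genuine lower barrier (for instance, slide a bowl-type translating graph up from below and apply the maximum principle) or follow the paper and invoke Meeks--Yau inside the mean convex region $\Omega\times\R$; to be fair, the paper's own appeal to homogeneity of $(\Omega\times\R,g_c)$ glosses over the same degeneration. The rest of your outline --- no vertical tangent planes, the covering argument, and uniqueness --- is correct and matches the paper.
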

\begin{proof}
The proof is similar to that of \cite{Pinheiro} and therefore we skip some details. First we note that $\Omega\times\R$ is homogeneous in the sense of Meeks and Yau \cite{Meeks-Yau-1, Meeks-Yau-2}, and Morrey \cite{Morrey}. Since the boundary $\partial\Omega$ consists of geodesic and convex arcs, the boundary $\partial(\Omega\times\R)$ is mean convex. By Lemma \ref{Product Lemma} it remains mean convex also when we change the metric to Ilmanen's metric $g_c$. Therefore there exists an embedded minimal (w.r.t. $g_c$) disk $\Sigma \subset \Omega\times\R$ with boundary $\Gamma$. It remains to prove that $\operatorname{int}(\Sigma)$ is a graph over $\Omega$. 

First we show that, for all $p\in\operatorname{int}(\Sigma)$, $T_p\Sigma$ is not a vertical plane. On the contrary, suppose that there exists a point $p\in\operatorname{int}(\Sigma)$ such that $p\in M\times\{c\}$ for some $c\in\R$ and that the tangent plane $\Pi$ to $\Sigma$ is vertical in $\Omega\times\R.$ Take a basis $\{\partial_t,\ v\}$ tangent to $\Pi$ at $p$, where $\partial_t$ is tangent to $\Sigma$ and $v$ is tangent to $M\times\{c\}$ with $||v||=1.$ Let $\gamma$ be the unique geodesic in $M\times\{c\}$ such that $\gamma(0)=p$ and $\gamma'(0)=v$. Note that $\gamma$ intersects $\partial(\Omega\times\R)$ exactly in two points. 
 
Now $\gamma\times\R$ is a totally geodesic surface, in particular minimal, in $\Omega\times\R$ and by Lemma \ref{Product Lemma} it is minimal also with respect to the metric $g_c$. Moreover, we have $T_p(\gamma\times\R)=\Pi$ and therefore, near $p$, $I\coloneqq \Sigma\cap(\gamma\times\R)$ contains at least two curves that intersect transversally at $p.$ If there exists a closed curve $\alpha$ in $I\setminus\partial\Sigma$, then $\alpha$ is the boundary of a minimal disk $D$ in $\Sigma.$ Thus we could choose a geodesic curve $\beta$ in $D$ so that the totally geodesic surface $\beta\times\R$ touches $D$ at an interior point. But this is impossible by the maximum principle. 

Since $I$ does not contain a closed curve, each of the branches leaving $p$ must go to $\partial\Sigma$. Moreover, $\gamma$ intersects $\partial\Omega$ at two points so at least two of these branches must go to the same point or vertical segment of $\partial\Sigma$. However, this yields again closed curve that bounds a minimal surfaces and we get a contradiction with the maximum principle. Therefore $T_p\Sigma$ is not a vertical plane,
 
 %but contains at least two curves and $\gamma$ intersects $\partial\Omega$ at two points, then a curve $\eta$ in $I$ with the same end point in $M\times\{c\}\cap\Omega\times\R.$ Thus, we again get a minimal disk in $\Sigma,$ which is impossible by the early argument. This contraction given us that for all $p\in\operatorname{int}(\Sigma)$, $T_p\Sigma$ is not a vertical plane.

%Finally, if we prove that any vertical segment in $\Omega\times\R$ intersects $\Sigma$ at an unique point, then $\Sigma$ is a graph over $\Omega.$ Indeed, as $\Sigma$ is an embedded disk, then it divides $\Omega\times\R$ in two connected components. Moreover, as $\Sigma$ is orientable and by above claim, we may orient $\Sigma$ such that \(\langle N,\partial_t\rangle>0\) in $\Sigma.$ Now take two consecutive points $p_1$ and $p_2$ in the intersection of $\operatorname{int}\Sigma$ with a vertical segment of $\Omega\times\R.$ By hypothesis $\langle N(p_i),\partial_t\rangle>0.$  On the other hand, since $p_1$ and $p_2$ are consecutive points, then either $\langle N(p_1),\partial_t\rangle>0$ and $\langle N(p_2),\partial_t\rangle<0$ or $\langle N(p_2),\partial_t\rangle>0$ and $\langle N(p_1),\partial_t\rangle<0,$ which is a contradiction.

Finally, the same argument as in \cite{Pinheiro} shows that each vertical line in $\Omega\times\R$ intersects $\Sigma$ exactly at one point and therefore $\Sigma$ is a graph over the interior of $\Omega$.

\end{proof}

%At this part, we will state the Comparison Principle and a version of the maximum principle that we are going to need after. Let us start with the Comparison Principle.

%\begin{Lemma}[Local Barrier]
%Let $\Omega$ be a bounded domain in $M$ and $\gamma\subset\partial\Omega$ be a $C^2$ arc. Suppose that $k(\gamma)>c$ for some positive constant $c$. Then there is a neighbourhood $U$ of $\gamma$ in $\Omega$ and smooth function $u:U\to\R$ such that \[\Div\left(\frac{u}{W}\right)\geq\frac{1}{W}\ \operatorname{and}\ \frac{\partial u}{\partial \nu}=-\infty\ \operatorname{along\ of\ \gamma},\]
%where $W=\sqrt[]{1+|\nabla u|^2}$  and $\nu$ is the exterior normal to $\partial\Omega.$
%\end{Lemma}
%\begin{proof}
%Let \(r\) be the distance function to $\gamma$ so that in a neighbourhood $U$ of $\gamma$ in $\Omega$ we have $0\leq r<\delta$ and $k(x)>c$ for all $x\in\{r(x)=\varepsilon\}$ and $\varepsilon\in[0,\delta)$, where $k(x)$ is the curvature of $\{r(x)=\varepsilon\}$. Take $h:\R\to\R$ any smooth function and consider $u:=h(r).$  With this choose, note that 
%\[
%\Div\left(\frac{\nabla u}{W}\right)=-\frac{h'}{\sqrt[]{1+(h')^2}}k(x)+\frac{h''}{(\sqrt[]{1+(h')^2})^3}
%\]
%\end{proof}

%\begin{Theorem}[Comparison Principle]\label{Comparison Principle}
%Suppose that $u,v\in C^{2}(\Omega)\cap C^{0}(\overline{\Omega})$ satisfy \eqref{soliton} and $u\geq v$ on $\partial\Omega.$ Then $u\geq v$ in $\Omega.$
%\end{Theorem}
 
 To prove our main theorem, we will need the following maximum principle.

\begin{Proposition}[Maximum principle]\label{Max. Principle}
Let $\Omega \subset M$ be an admissible domain. Suppose that $u_1$ and $u_2$ satisfy
\[
\Div\left(\frac{\nabla u_{1}}{\sqrt[]{1+|\nabla u_1|^2}}\right)\geq\Div\left(\frac{\nabla u_{2}}{\sqrt[]{1+|\nabla u_2|^2}}\right),
\] 
and \(\liminf(u_2-u_1)\geq0\) for any approach of $\partial\Omega$, with possible exception of finite numbers of points $\{q_1,\ldots,q_r\} \eqqcolon E \subset\partial\Omega.$ Then $u_2\geq u_1$ on $\partial\Omega\setminus E$ with strict inequality unless $u_2=u_1.$
\end{Proposition}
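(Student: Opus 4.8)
The plan is to establish the comparison principle by the classical divergence-form trick: introduce the vector fields $X_j = \nabla u_j/\sqrt{1+|\nabla u_j|^2}$ for $j=1,2$, note that the hypothesis reads $\Div X_1 \geq \Div X_2$, and integrate the quantity $\Div(X_1 - X_2)$ against a cutoff of the difference $w := u_1 - u_2$ over suitable subdomains that avoid the exceptional set $E$ and the region where $w$ might be large. The algebraic heart of the matter is the monotonicity inequality
\begin{equation*}
\left\langle \frac{\nabla u_1}{\sqrt{1+|\nabla u_1|^2}} - \frac{\nabla u_2}{\sqrt{1+|\nabla u_2|^2}},\ \nabla u_1 - \nabla u_2\right\rangle \geq 0,
\end{equation*}
with equality if and only if $\nabla u_1 = \nabla u_2$. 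This follows because the map $p \mapsto p/\sqrt{1+|p|^2}$ is the gradient of the strictly convex function $p \mapsto \sqrt{1+|p|^2}$, so the inner product above is nonnegative by strict monotonicity of gradients of strictly convex functions.

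First I would handle the boundary behaviour. Because $E$ consists of finitely many points, for each $\varepsilon > 0$ I would excise small geodesic balls around the $q_k$ and work on $\Omega_\varepsilon := \Omega \setminus \bigcup_k B(q_k,\varepsilon)$. On $\partial\Omega \setminus E$ the hypothesis $\liminf(u_2 - u_1) \geq 0$ lets me control $w = u_1 - u_2$ from above near the boundary. The standard device is to consider, for $\delta > 0$, the superlevel set $\{w > \delta\}$; I would show this set is compactly contained in $\Omega$ away from $E$, so that the positive part $(w - \delta)^+$ has compact support up to the excised balls. Then I integrate: testing $\Div(X_1 - X_2) \geq 0$ against $(w-\delta)^+$ and integrating by parts gives
\begin{equation*}
0 \leq \int_{\{w>\delta\}} (w-\delta)\,\Div(X_1 - X_2)\,{\rm d}\mu = -\int_{\{w>\delta\}} \langle X_1 - X_2, \nabla w\rangle\,{\rm d}\mu + (\text{boundary terms}),
\end{equation*}
and the monotonicity inequality forces the bulk integral on the right to be $\leq 0$. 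Combined with the boundary terms, which vanish or have the right sign as $\varepsilon \to 0$ because $(w-\delta)^+ \to 0$ near $\partial\Omega\setminus E$, I conclude $\int_{\{w>\delta\}} \langle X_1-X_2,\nabla w\rangle\,{\rm d}\mu = 0$, hence $\nabla w = 0$ on $\{w > \delta\}$ for every $\delta$, so $w \leq 0$, i.e. $u_2 \geq u_1$.

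For the rigidity statement, I would invoke strong connectedness of $\Omega$: if $u_2 = u_1$ at some interior point, then since $w \leq 0$ everywhere and $w$ satisfies a linear elliptic inequality (the operators $X_j$ linearize to a uniformly elliptic operator once gradients are locally bounded), the strong maximum principle propagates the equality $u_1 = u_2$ throughout $\Omega$, yielding the dichotomy.

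The main obstacle I anticipate is the boundary analysis near the exceptional points $E$ and the behaviour as the graphs go to $\pm\infty$: the test function $(w-\delta)^+$ need not have genuinely compact support, and the boundary terms from integrating by parts on $\Omega_\varepsilon$ must be shown to be controllable uniformly in $\varepsilon$. The key technical point is that the flux $|X_j| = |\nabla u_j|/\sqrt{1+|\nabla u_j|^2} < 1$ is uniformly bounded by $1$ regardless of how large $\nabla u_j$ becomes near $E$; this built-in bound, inherited from the mean-curvature-type structure of the operator, is exactly what lets the boundary contributions over $\partial B(q_k,\varepsilon)$ be estimated by a constant times the perimeter, which tends to $0$ with $\varepsilon$, so the finitely many singular boundary points are genuinely removable.
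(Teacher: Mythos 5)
Your overall strategy is the same as the paper's (which follows Spruck): excise small balls around the exceptional points, integrate $\Div(X_1-X_2)$ against a nonnegative cutoff of $w=u_1-u_2$, use Stokes, and exploit the pointwise monotonicity $\langle X_1-X_2,\nabla u_1-\nabla u_2\rangle\geq 0$ (the paper writes this as the explicit identity $\tfrac{1}{2}(W_1+W_2)\|N_1-N_2\|^2$, which is equivalent to your convexity argument). However, there is a genuine gap in your treatment of the boundary terms on the excised circles $\partial B(q_k,\varepsilon)$, and it is precisely the point where your proof deviates from the paper's. Your test function $(w-\delta)^+$ is truncated only from below. The hypotheses give \emph{no control whatsoever} on $u_1-u_2$ near the exceptional set $E$ (that is the entire purpose of allowing exceptional points: in the paper's application, the comparison functions blow up along boundary arcs ending at the vertices). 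Hence $\sup_{\partial B(q_k,\varepsilon)}(w-\delta)^+$ may tend to $+\infty$ as $\varepsilon\to 0$. The built-in flux bound $|X_j|<1$ that you invoke bounds only one factor of the boundary integrand $(w-\delta)^+\langle X_1-X_2,\nu\rangle$; the other factor is your test function itself, so the boundary contribution is bounded by $2\sup_{\partial B(q_k,\varepsilon)}(w-\delta)^+\cdot\|\partial B(q_k,\varepsilon)\|$, which need not vanish with $\varepsilon$. You flagged this as ``the main obstacle,'' but the resolution you propose does not close it.

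The missing device is the upper truncation: the paper's test function is
\[
\varphi=
\begin{cases}
K-\varepsilon, & u_1-u_2\geq K,\\
u_1-u_2-\varepsilon, & \varepsilon<u_1-u_2\leq K,\\
0, & u_1-u_2\leq\varepsilon,
\end{cases}
\]
which is bounded by $K$ \emph{independently of how $w$ behaves near $E$}. With this two-sided cutoff the flux term $J$ over the excised circles obeys $J\leq 2K\sum_k\|\partial B_\varepsilon(q_k)\|\to 0$ as $\varepsilon\to 0$, and one concludes $\nabla u_1=\nabla u_2$ on $\{0<w<K\}$; since $K$ is arbitrary, $\nabla u_1=\nabla u_2$ on $\{w>0\}$. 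The endgame also needs a small argument you gloss over: $\nabla w=0$ on the open set $\{w>0\}$ only makes $w$ locally constant there, so one argues that a component with nonempty interior forces $u_1=u_2+c$ with $c>0$ on all of connected $\Omega$ (the paper invokes the maximum principle / unique continuation here, close in spirit to your linearization remark), which then contradicts $\liminf(u_2-u_1)\geq 0$ on $\partial\Omega\setminus E$. So your skeleton is right, but without the upper truncation at level $K$ the removability of the finitely many exceptional points — the actual content of the proposition beyond the standard comparison principle — is not proved.
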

\begin{proof}
The proof follows similar arguments as in \cite{Spruck-Infinite}. Let $K$ and $\varepsilon$ be positive constants, with $K$ large enough and $\varepsilon$ small enough. Define a function
\[
\varphi\coloneqq
\begin{cases}
K-\varepsilon, &\text{if } \  u_1-u_2\geq K; \\
u_1-u_2 -\varepsilon, &\text{if } \ \varepsilon<u_1-u_2\leq K;  \\
0,  &\text{if } \ u_1-u_2 \leq\varepsilon.
\end{cases}
\]

Notice that $\varphi$ is Lipschitz with $0\leq\varphi\leq K$, and $\nabla\varphi=\nabla u_1-\nabla u_2$ in the set $\{\varepsilon<u_1-u_2<K\} $ and $\nabla\varphi=0$ almost everywhere in the complement of $\{\varepsilon<u_1-u_2<K\} $.  Around any point $q_i\in E$, consider an open geodesic disk $B_\varepsilon(q_i)$ of radius $\varepsilon$ and center $q_i$. Let $\Omega_\varepsilon:=\Omega\setminus\cup B_\varepsilon(q_i)$, and suppose that $\partial\Omega_\varepsilon=\tau_\varepsilon\cup\rho_\varepsilon$, where $\rho_\varepsilon=\cup(\partial B_\varepsilon(q_i)\cap\Omega)$ and $\tau_\varepsilon=\partial\Omega_{\varepsilon}\cap\partial\Omega.$ Since \(\liminf(u_2-u_1)\geq0\) in $\partial\Omega\setminus E,$ we have $\varphi\equiv0$ in a neighbourhood of $\tau_\varepsilon.$

Define 
\begin{equation}\label{Equation J}
J:=\int_{\rho_\varepsilon}\varphi\left\{\left\langle\frac{\nabla u_1}{W_1},\nu\right\rangle-\left\langle\frac{\nabla u_2}{W_2},\nu\right\rangle\right\},
\end{equation}
where $\nu$ is the unit outer conormal to $\Omega_\varepsilon$ and $W_i = \sqrt[]{1+|\nabla u_i|^2}.$ From \eqref{Equation J} and $0\leq\varphi\leq K$ we obtain

\begin{equation}\label{Ineq. upper J}
J\leq 2K\sum_{i=1}^{r}||\partial B_\varepsilon(q_i)||,
\end{equation}
where $||\partial B_\varepsilon(q_i)||$ denotes the length of $\partial B_{\varepsilon}(q_i).$ On ther other hand, since $\varphi$ is Lipschitz, we have

\[
\Div\left(\varphi\left\{\frac{\nabla u_1}{W_1}-\frac{\nabla u_2}{W_2}\right\}\right)=\nabla\varphi\left\{\frac{\nabla u_1}{W_1}-\frac{\nabla u_2}{W_2}\right\}+\varphi\left\{\Div\left(\frac{\nabla u_1}{W_1}\right)-\Div\left(\frac{\nabla u_2}{W_2}\right)\right\},
\]
almost everywhere in $\Omega$. By Stokes theorem we get

\begin{align}\label{Ineq. Under J}
J &= \int_{\Omega_\varepsilon}\left\{\left\langle\nabla\varphi,\left(\frac{\nabla u_1}{W_1}-\frac{\nabla u_2}{W_2}\right)\right\rangle+\varphi\left(\Div\left(\frac{\nabla u_1}{W_1}\right)-\Div\left(\frac{\nabla u_2}{W_2}\right)\right)\right\} \nonumber \\ 
&\geq \int_{\Omega_\varepsilon}\left\langle\nabla\varphi,\left(\frac{\nabla u_1}{W_1}-\frac{\nabla u_2}{W_2}\right)\right\rangle.
\end{align}
Now if $N_i:=\frac{\partial_t}{W_i}-\frac{\nabla u_i}{W_i},$ then

\begin{align}\label{Ineq. normal}
\nonumber\left\langle\nabla u_1-\nabla u_2,\left(\frac{\nabla u_1}{W_1}-\frac{\nabla u_2}{W_2}\right)\right\rangle 
&=\left\langle N_1-N_2,W_1N_1- W_2N_2\right\rangle \\
\nonumber &= W_1-(W_1+W_2)\langle N_1,N_2\rangle+W_2\\
&=\frac{1}{2}(W_1+W_2)||N_1-N_2||^2.
\end{align}

From \eqref{Ineq. upper J}, \eqref{Ineq. Under J} and \eqref{Ineq. normal} we get
\[
 2K\sum_{i=1}^{r}||\partial B_\varepsilon(q_i)||\geq\frac{1}{2}\int_{\Omega_\varepsilon\cap\{0<u_1-u_2<K\}}(W_1+W_2)||N_1-N_2||^2\geq0.
\]
In particular, when $\varepsilon\to 0$ we obtain

\[
\int_{\{0<u_1-u_2<K\}}(W_1+W_2)||N_1-N_2||^2=0.
\]
Therefore $N_1=N_2$ in $\{0<u_1-u_2<K\},$ so $\nabla u_1=\nabla u_2$ in $\{0<u_1-u_2<K\}.$ As $K$ was arbitrary we may conclude $\nabla u_1=\nabla u_2$ in the set $\{0<u_1-u_2\}.$ Suppose now that $\{0<u_1-u_2\}$ contains a connected component with non-empty interior. By the previous argument $u_1=u_2+c,$ where $c$ is a positive constant, so by the maximum principle $u_1=u_2+c$ in $\Omega.$ On the other hand, as \(\liminf(u_2-u_1)\geq0\) for any approach of $\partial\Omega\setminus E$, then $c$ is a non-positive constant, which is impossible. This finishes the proof.
\end{proof}

%\begin{Lemma}\label{Barrier}
%Let $\Omega$ be a bounded domain in $M$ and $\gamma\subset\partial\Omega$ be a $C^2$ arc with $k(\gamma)>0.$ Suppose that \(u:\Omega\to\R\) is a $C^2(\Omega\cup\gamma)$ satisfying \eqref{soliton} and $u\geq\lambda$ along $\gamma$. Then there is a constant $c>0$ so that for each $C^2$ arc $\eta\subset\gamma$ there is a neighbourhood $U$ of $\eta$ in $\Omega$ such that $u\geq\lambda-c.$
%\end{Lemma}
%\begin{proof}
%I'm going to write after!!!
%\end{proof}

Finally, we can prove the main theorem of this section. 

\begin{Theorem}[Existence of Jenkins-Serrin Solution]%\label{Existence type 1}
Let $\Omega\subset M$ be an admissible domain with $\{B_i\}=\varnothing.$ Given any continuous data $f_i \colon C_i\to\R$, there exists a Jenkins-Serrin solution $u\colon\Omega\to\R$ for the translating soliton equation with continuous data $u|_{C_i}=f_i$, if for any admissible polygon $\mathcal{P}$ we have
\begin{equation}\label{structure-2}
2\alpha(\mathcal{P})<\ell(\mathcal{P}).
\end{equation}
\end{Theorem}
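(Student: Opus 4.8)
The plan is to follow the classical Perron-type approximation scheme of Jenkins and Serrin, adapted to the translating equation on $M$. First I would produce, for each $n\in\mathbb{N}$, a bounded approximating solution $u_n$: let $\Gamma_n\subset M\times\R$ be the Nitsche curve over $\partial\Omega$ whose graph part over each $C_i$ is $\graph(f_i)$, whose value over each $A_i$ is the constant $n$, and which is closed up by the vertical segments sitting over the vertices of $\Omega$. By the local existence result (Theorem \ref{Existence}) there is a unique translating soliton with boundary $\Gamma_n$, and by the interior argument in its proof this soliton is the graph of a function $u_n$ over $\Omega$. The maximum principle (Proposition \ref{Max. Principle}) then supplies the two facts that drive the whole scheme: the sequence is monotone, $u_n\le u_{n+1}$ (the boundary data increase on the $A_i$ and agree on the $C_i$), and it is uniformly bounded below by $\min_i\min_{C_i}f_i$.

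Next I would analyze the monotone limit $u:=\lim_n u_n\in(-\infty,+\infty]$. On the \emph{convergence set} $\mathcal{C}=\{x\in\Omega:u(x)<\infty\}$, interior gradient and curvature estimates — available because translating graphs are stable minimal hypersurfaces for the Ilmanen metric $g_c$ and satisfy the local area bound coming from Lemma \ref{Homology}, so that the compactness Theorem \ref{Wickramasekera} applies — upgrade the monotone convergence to smooth convergence, and $u$ solves the translating equation on $\mathcal{C}$. The goal is to show that the \emph{divergence set} $U=\{x\in\Omega:u(x)=+\infty\}$ is empty. Arguing as in Jenkins-Serrin, each interior edge $\gamma$ of $\partial U\cap\Omega$ is approached from $\mathcal{C}$ with $u\to+\infty$, so the graph of $u$ over $\mathcal{C}$ is complete as we approach $\gamma$; Theorem \ref{main} then forces $\gamma$ to be a geodesic. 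Together with the hypothesis that no two $A_i$ share an endpoint, the standard Jenkins-Serrin structure lemmas show that $\overline{U}$ is a finite union of admissible polygons whose sides are arcs $A_i$ and interior geodesics $\gamma_j$.

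The contradiction comes from a flux computation on one such component $\mathcal{P}\subset\overline{U}$. Integrating $\Div(\nabla u_n/W_n)=1/W_n$ over $\mathcal{P}$ and applying the divergence theorem gives
\[
\int_{\mathcal{P}}\frac{1}{W_n}\,{\rm d}A=\int_{\partial\mathcal{P}}\frac{\langle\nabla u_n,\nu\rangle}{W_n}\,{\rm d}s,
\]
with $\nu$ the outer conormal. As $n\to\infty$ the saturation $|\nabla u_n/W_n|\to1$ along the divergence edges yields $\int_{A_i}\to+|A_i|$ on the boundary arcs (there $u$ grows toward $\partial\Omega$, so the conormal tends to $+\nu$) and $\int_{\gamma_j}\to-|\gamma_j|$ on the interior geodesics (the gradient there points into $U=\mathcal{P}$). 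Since the left-hand side is nonnegative, passing to the limit gives $\alpha(\mathcal{P})-\sum_j|\gamma_j|\ge0$, i.e. $\sum_j|\gamma_j|\le\alpha(\mathcal{P})$. Because $\ell(\mathcal{P})=\alpha(\mathcal{P})+\sum_j|\gamma_j|$, this forces $\ell(\mathcal{P})\le2\alpha(\mathcal{P})$, contradicting the hypothesis $2\alpha(\mathcal{P})<\ell(\mathcal{P})$. Hence $U=\varnothing$ and $u$ is finite throughout $\Omega$. Note that, unlike the CMC case, the translating term here only helps: it enters as the favorably signed quantity $\int_{\mathcal{P}}1/W_n\ge0$, which is exactly why the structure condition is the same as in the minimal case.

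It remains to check the boundary behaviour: that $u$ attains the continuous data $f_i$ on each $C_i$ follows from upper and lower barriers built from the convexity of the $C_i$ together with the geodesic convexity of $\Omega$, while $u\to+\infty$ on each $A_i$ follows from $u\ge u_n\equiv n$ near $A_i$ for every $n$, reinforced if necessary by a lower barrier of Scherk type along the geodesic $A_i$. The main obstacle, and where the real work lies, is the second paragraph: establishing the Jenkins-Serrin structural lemmas in this generality — that the interior divergence edges are geodesics (this is precisely Theorem \ref{main}) but also that they cannot terminate at interior points and must join vertices of $\Omega$, so that $\overline{U}$ is genuinely a union of admissible polygons — together with the interior curvature estimates guaranteeing smooth convergence on $\mathcal{C}$ and the boundary flux lemma giving $\int_{A_i}\to+|A_i|$. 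The flux bookkeeping of the third paragraph is then routine.
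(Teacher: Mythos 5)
Your first paragraph is exactly the paper's setup: the same Nitsche curves (with $\beta_n=n$ on the $A_i$ and $\beta_n=\min\{f_i,n\}$ on the $C_i$), Theorem \ref{Existence} for the approximating translators $u_n$, and monotonicity from the comparison principle. From there the two arguments part ways. The paper never touches the divergence set: it observes that the hypothesis $2\alpha(\mathcal{P})<\ell(\mathcal{P})$ (with $\beta\equiv 0$, since $\{B_i\}=\varnothing$) is precisely the hypothesis of Pinheiro's Jenkins--Serrin theorem for the \emph{minimal} equation \cite{Pinheiro}, so there exists a minimal Jenkins--Serrin solution $v$ with data $f_i$ on the $C_i$ and $+\infty$ on the $A_i$; since each $u_n$ is a strict subsolution of the minimal equation ($\Div(\nabla u_n/W_n)=1/W_n>0$) and $\liminf(v-u_n)\geq 0$ on $\partial\Omega$ off the vertices, Proposition \ref{Max. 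Principle} gives $u_n<v$ for all $n$. This single global upper barrier replaces your entire second and third paragraphs: the monotone sequence is uniformly dominated, so the divergence set is empty by fiat, and the limit $u\leq v$ solves the equation with the right boundary behaviour ($u\geq u_n$ and $u_n|_{A_i}=n$ forces $u\to+\infty$ on $A_i$, while $u_n\leq u\leq v$ squeezes $u$ to $f_i$ on $C_i$). Your route is the classical self-contained Jenkins--Serrin scheme; it is the natural path if one wants more (both families $A_i,B_i$, necessity of the condition), and it is worth noting that the paper's shortcut is only available because $\{B_i\}=\varnothing$: translators are one-sided subsolutions of the minimal equation, so minimal barriers work from above but not from below.

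That said, as written your proposal is an outline rather than a proof, and the missing pieces are exactly the ones you defer. First, the structure of the divergence set: Theorem \ref{main} gives that an interior edge of $U$ is a geodesic \emph{provided} the graph of $u$ over the convergence set is complete as one approaches that edge, i.e.\ provided $u\to+\infty$ there from the convergence side; this is not automatic and needs an argument. Nothing in your sketch shows that such edges terminate at vertices of $\Omega$, that $\partial U$ stays away from the $C_i$, or that there are finitely many components --- these are the lemmas that make $\overline U$ a union of \emph{admissible} polygons, and in the literature they are proved for the minimal equation, not the translating one. Second, the flux lemmas (flux tending to $+\|A_i\|$ along divergence arcs of $\partial\Omega$ and to $-\|\gamma_j\|$ along interior edges) likewise must be re-derived for the translating equation, since their proofs use the equation itself; you cite them as standard without proof. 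Your sign bookkeeping is correct, and the observation that the translating term $\int_{\mathcal{P}}1/W_n\geq 0$ only helps (so the structure condition is the minimal one) is exactly right, so the route can surely be completed --- but completing it \emph{is} the proof. A minor point: the claim that the $u_n$ are bounded below by $\min_i\min_{C_i}f_i$ is false in general, since a translating graph can dip below its boundary minimum (the bowl soliton does); it is also unnecessary, as $u_n\geq u_1$ suffices.
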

\begin{proof}
Define a Nitsche curve $\Gamma_n = (\alpha_n,\beta_n)$ by setting $\beta_n=n$ on $\{A_i\}$ and $\beta_n=\min\{f_i,n\}$ on $C_i$ for all $i.$ By Theorem \ref{Existence}, for all $n\in\mathbb{N}$, there exists $u_n \colon \Omega\to\R$ so that ${\rm Graph}[u_n]$ is a translating soliton in $\Omega\times\R$ with boundary $\Gamma_n$. Notice that if $n>m$ we have $u_n\geq u_m$ on $\partial\Omega,$ so $u_n>u_m$ in $\Omega$ by comparison principle. Hence $\{u_n\}$ is a monotone sequence. Taking into account  Pinheiro's results \cite{Pinheiro}, \eqref{structure-2} guarantees that there exists a Jenkins-Serrin solution $v\colon\Omega\to\R$ for the minimal graph equation with continuous data $f_i$. Since \[\Div\left(\frac{v}{\sqrt[]{1+|v|^2}}\right)=0<\frac{1}{\sqrt[]{1+|u_n|^2}}=\Div\left(\frac{u_n}{\sqrt[]{1+|u_n|^2}}\right)\] and $\liminf(v-u_n)\geq0$ on $\partial\Omega\setminus E,$ where $E$ is the set of vertices of $\Omega$, Proposition \ref{Max. Principle} implies $v>u_n$ for all $n.$ Therefore $\lim u_n=u$ exists and $u$ satisfies
\[
\Div\left(\frac{u}{\sqrt[]{1+|\nabla u|^2}}\right)=\frac{1}{\sqrt[]{1+|u|^2}}
\]
in $\Omega.$ Clearly $u|_{C_i}=f_i$, by construction, and $u\to\infty$ as we approach $A_i$ for all $i.$
\end{proof}

%Now we are going to prove the existence of Jenkins-Serrin solution when $\{A_i\}=\varnothing.$

%\begin{Theorem}[Existence of Jenkins-Serrin Solution]\label{Existence type 2}
%Let $\Omega\subset M$ be an admissible domain with $\{A_i\}=\varnothing.$ Given any continuous data $f_i:C_i\to\R$, then there is a Jenkins-Serrin solution $u:\Omega\to\R$ for translating equation with continuous data \(u|_{C_i}=f_i\) if for any admissible polygon $\mathcal{P}$ we have
%\begin{equation}\label{structure-3}
%2\beta(\mathcal{P})<\gamma(\mathcal{P})-\operatorname{Area}(\mathcal{P}).
%\end{equation}
%\end{Theorem}

\bibliographystyle{amsplain, amsalpha}

\end{document}